\tikzset{>=latex}
\tikzstyle{vector}=[->,very thick]
\tikzstyle{mydashed}=[dash pattern=on 2pt off 2pt]
\newcommand{\NN}{\mathbb{N}}
\newcommand{\RR}{\mathbb{R}}
\newcommand{\cE}{\mathcal{E}}
\newcommand{\cX}{\mathcal{X}}
\newcommand{\cM}{\mathcal{M}}
\newcommand{\cH}{\mathcal{H}}
\newcommand{\cT}{\mathcal{T}}
\newcommand{\dd}{\mathrm{d}}
\newcommand{\Id}{\mathrm{Id}}
\newcommand{\pa}{\partial}
\newcommand{\eps}{\varepsilon} 
\newcommand{\ds}{\displaystyle}
\newcommand{\blue}[1]{#1}
\newtheorem{theo}{Theorem}[section]
\newtheorem{prop}[theo]{Proposition}
\newtheorem{cor}[theo]{Corollary}
\newtheorem{lem}[theo]{Lemma}
\newtheorem{rema}[theo]{Remark}
\newtheorem{defi}[theo]{Definition}
\title{On a Vlasov-Fokker-Planck equation for stored electron beams}
\date{\today}
\author[Ludovic Cesbron]{Ludovic Cesbron}
\address[Ludovic Cesbron]{CY Cergy Paris Universit\'e, Laboratoire Analyse, G\'eom\'etrie et Mod\'elisation (UMR CNRS 8088), 2 avenue Adolphe-Chauvin BP 222, 95302 Pontoise cedex}
\email{ludovic.cesbron@cyu.fr}
\author[Maxime Herda]{Maxime Herda}
\address[Maxime Herda]{Inria, Universit\'e de Lille, CNRS, UMR 8524 - Laboratoire Paul Painlev\'e,
F-59000 Lille, France}
\email{maxime.herda@inria.fr}
\begin{document}
\begin{abstract}
In this paper we study a self-consistent Vlasov-Fokker-Planck equations which describes the longitudinal dynamics of an electron bunch in the storage ring of a synchrotron particle accelerator. We show existence and uniqueness of global classical solutions under physical hypotheses on the initial data. The proof relies on a mild formulation of the equation and hypoelliptic regularization estimates. We also address the problem of the long-time behavior  of solutions. We prove the existence of steady states,  called Haissinski solutions, given implicitly by a nonlinear integral equation. When the beam current (\emph{i.e.} the nonlinearity) is small enough, we show uniqueness of steady state and local asymptotic nonlinear stability of solutions in appropriate weighted Lebesgue spaces. The proof is based on hypocoercivity estimates. Finally, we discuss the physical derivation of the equation and its particular asymmetric interaction potential. 

\medskip

\noindent\textsc{Keywords: } Vlasov-Fokker-Planck; Hypocoercivity; Hypoellipticity; Synchrotron radiation; Wakefield; Haissinski solutions.

\medskip
\noindent\textsc{Mathematics Subject Classification (2020): 35Q83; 35Q84; 35B35; 82C22.}

\end{abstract}
\maketitle

\tableofcontents

\section{Introduction}
We are interested in the study of a Vlasov-Fokker-Planck (VFP) equation which models the dynamics of an electron bunch in the storage ring of a synchrotron particle accelerator. In these devices, the particles rotate at a velocity close to the speed of light in an almost circular motion. Thanks to various electromagnetic devices such as bending magnets, wigglers and radio-frequency (RF) cavity, the particles are kept close to the ideal trajectory of the so-called \emph{synchronous electron} and emit light in the form of synchrotron radiation. As each particle creates its own electromagnetic field the bunch is also subject self consistent interactions. 

In this context, the particles can be described by their distribution function $f\equiv f(t,x,v)$ evolving in time $t\geq0$. The phase space variables $(x,v)\in\RR^2$ are dimensionless quantities. The $x$ variable stands for the longitudinal position relative to the synchronous particle along the ideal orbit. The $v$ variable denotes the deviation from the nominal energy of the synchronous particle. The notation for phase space variables  usually differs in the physics literature (see for instance \cite{warnock2000general, cai2011linear, roussel2014spatio, warnock2018numerical}). Here for the sake of comparison with other models from mathematical kinetic theory we keep the $(x,v)$ notation even if it does account for absolute position and velocity of particles.

The Vlasov-Fokker-Planck system describing the dynamics reads
\begin{equation}\label{eq:VFPnonlin}
\left\{
\begin{array}{l}
\partial_t f + v\partial_xf +(F_f(t,x)-\alpha x)\partial_v f = 2\nu\partial_v(vf +\theta \partial_v f),\\[.75em]
\ds F_f(t,x) = -I\partial_x K\ast\rho(t,x) = -I\iint_{\RR^2} \partial_xK(x-y)f(t,y,w)\dd y\dd w,\\[.75em]
f(0,x,v) = f^\text{in}(x,v).
\end{array}
\right.
\end{equation}
The parameters $\alpha, \nu, \theta$ and $I$ are assumed to be positive and the latter will be referred to as beam current. The quantity 
$
\rho(t,x) = \int_{\RR} f(t,x,v)\dd v 
$ 
is the macroscopic density of particles. In the equation, the first transport term contributes to the offset of particles with higher or lower energy than that of the synchronous electron. The contribution $-\alpha x$ to the relative energy accounts for the linearized field of the RF cavity, which is tuned to confine the particles near the synchronous electron. The Fokker-Planck part of the equation on the right-hand side models the variation of energy due to synchrotron radiation. The damping term comes from a balance between the loss of energy due to synchrotron radiation and the gain due to the acceleration by the RF cavity. The diffusion part of the Fokker-Planck equation accounts for random quantum fluctuations in the energy, due to the emission of photons. Self-consistent electromagnetic interactions are encoded in the mean-field term $F_f(t,x)$ which is proportional to the longitudinal part of the electric field emitted by the particles, called the \emph{wakefield}. It depends on a potential $K\equiv K(x)$ which in practice is bounded with bounded but discontinous derivative meaning that typically $K\in W^{1,\infty}(\RR)$. Compared to, for instance, a Coulomb interaction potential for electrostatic plasmas, 
here $K$ \emph{does not need to be an even function} of the position. Typically there are $x\in\RR$ such that
\begin{equation}\label{eq:asym}
K(x)\neq K(-x).
\end{equation}
Equivalently, the Fourier transform of the \emph{wakefield potential} $-\partial_xK$, called the \emph{impedance} in this context, has a non trivial real part. The wakefield potential depends on the characteristics of the accelerator. However, to fix ideas, in the idealized case of an electron moving near the speed of light in free space on a circular orbit, $K$ is supported on the half line $\{x\geq0\}$ meaning that an electron interacts only with electrons ahead of it on the orbit. The asymmetry is due to the geometry of the trajectories and the relativistic nature of the dynamics.

This paper is devoted to the analysis of \eqref{eq:VFPnonlin} for a general asymmetric interaction kernel $K\in W^{1,\infty}$. We study the well-posedness of the model, regularity of solutions, steady states as well as the long-time behavior of solutions in a weakly nonlinear regime (\emph{i.e.} for $I$ small enough). Compared to similar models that are studied in the mathematical literature, because of \eqref{eq:asym}, equation~\eqref{eq:VFPnonlin} lacks the usual free energy estimate (see Section~\ref{sec:freeenerg}) which yields a Lyapunov functional. This is, from the mathematical point of view, the main originality of this model (see for instance \cite[Section 17]{villani_2009_hypocoercivity} in comparison). However, this seemingly harmless generalization of the model  changes dynamical properties of the equation in highly nonlinear regime, where complex behaviors such as instabilities or limit cycles can exist.  Let us also mention that the focus here is not put on the form of the external confinement  potential, which is taken quadratic here as it arises naturally in the derivation of the model. In Section~\ref{sec:derivation}, we give more details, as well as both formal and rigorous arguments concerning the derivation of equation~\eqref{eq:VFPnonlin} and the typical kernel $K$ in the context of storage rings.

Vlasov-Fokker-Planck equations have attracted a lot of attention since the seminal paper of Chandrasekhar \cite{chandrasekhar_1943_stochastic}. Different flavors are used  to model systems of interacting particles in astrophysics, plasma physics, particle accelerator physics... As in \eqref{eq:VFPnonlin} they combine a Vlasov part, \emph{i.e.} the left-hand side, and a Fokker-Planck part, \emph{i.e.} the right-hand side. The mean field interaction kernel $K$ can then encode a variety of self-consistent interactions. One of the most studied version is the Vlasov-Poisson-Fokker-Planck (VPFP) equation, which is \eqref{eq:VFPnonlin} -- and higher dimensional versions -- with $K$ the fundamental solution of either $-\Delta_x$ or $\Delta_x$, namely a Coulomb or a Newton interaction kernel. Because of the importance of VPFP in the literature, we will often compare \eqref{eq:VFPnonlin} with VPFP in the following.  Its well-posedness has been studied in \cite{neunzert_1984_VFP, degond_1986_global, victory_1990_classical, victory_1991_existence, bouchut_1993_existence}. In the present paper we will rely on the mild (Duhamel) formulation of \eqref{eq:VFPnonlin} in order to construct solutions, as in \cite{victory_1990_classical}. Indeed since the potential is quadratic one can use the explicit fundamental solution of the linear equation ($I=0$).

Vlasov-Fokker-Planck equations are degenerate parabolic equations since the diffusion occurs only in the $v$ variable. The question of the regularity of solutions via hypoellipticity theory \cite{hormander_1967_hypoelliptic} has been studied for linear VFP with various confining potentials \cite{herau_2004_isotropic, hellfer_2005_hypoelliptic}, as well as for VPFP \cite{bouchut_1995_smoothing}. For \eqref{eq:VFPnonlin}, we will once again use the fundamental solution to derive regularization estimates. Let us mention however, that methods purely based on energy estimates and functional analysis tools could also be used to derive regularization estimates (see for instance  \cite[Section A.21]{villani_2009_hypocoercivity}).

For the nonlinear Vlasov-Fokker-Planck \eqref{eq:VFPnonlin}, one expects a stationary solution of the form
\begin{equation}\label{eq:steadystate}
f^\infty(x,v) = \frac{\mathcal{M}}{\sqrt{2\pi\theta}}e^{-\frac{v^2}{2\theta}}\sigma^\infty(x)\,,
\end{equation}
with $\mathcal{M}$ the initial mass
\begin{equation}\label{eq:mass}
\mathcal{M} = \iint_{\RR^2} f^\text{in}(x,v)\dd x\dd v.
\end{equation}
In the context of VPFP, such states are called Maxwell-Boltzmann densities which can be found by solving a nonlinear Poisson equation to find $\sigma^\infty$ (see for instance \cite{dolbeault_1991_stationary} or \cite[Section 3]{herda_2018_large}). More generally when the potential is symmetric, there is a variational characterization of stationary solutions which can be studied with classical methods from calculus of variation. Under assumption \eqref{eq:asym}, this characterisation is not satisfied anymore and one determines $\sigma^\infty$ by finding a fixed point of a nonlinear and nonlocal mapping. In the particle physics literature, this fixed point equation (and their solutions) is called Haissinski equation (resp. solutions) in reference to \cite{haissinski1973exact}. Let us point out  \cite{warnock2018numerical} and references therein for further details on this topic.

Concerning the long-time behavior, in the linear case $I=0$ one easily shows convergence towards the unique stationary state (here a Gaussian) with rate (here exponential) depending on the confining potential (here quadratic). In more general linear settings, quantitative convergence estimates can be established thanks to hypocoercivity methods \cite{herau_2004_isotropic, villani_2009_hypocoercivity, DMS, bouin_2020_hypocoercivity}. Research in this direction has been very active in the past couple of decades. Hypocoercivity methods have also been used for nonlinear models. For \eqref{eq:VFPnonlin} with symmetric interactions, $K\in W^{2,\infty}$ and confinement on the torus (instead of through a quadratic potential), we refer to \cite[Section A.21]{villani_2009_hypocoercivity}. With the same Lipshitz assumption on $\partial_xK$ and more general confinements, we refer to the probabilistic approaches \cite{bolley_2010_trend, guillin_2022_convergence} relying on the SDE interpretation of \eqref{eq:VFPnonlin} through Langevin type equations. Concerning   VPFP, asymptotic stability in weakly nonlinear regimes and  perturbative setting is shown in \cite{herau_2016_global,herda_2018_large}. For specific kinetic models, hypocoercivity methods have been successful in nonlinear settings without smallness assumptions in  \cite{favre2020hypocoercivity} and in \cite{ADLT}. In the latter, which concerns VPFP in dimension $1$ (which is  \eqref{eq:VFPnonlin} with the symmetric kernel), the hypocoercivity method is coupled with an important a priori estimate which is the free energy estimate which yields a Lyapunov functional for the nonlinear equation. As soon as \eqref{eq:asym} is satisfied such a Lyapunov functional becomes unavailable and one expects asymptotic stability of the stationary solutions only in weakly nonlinear regime, namely when $I$ is small enough. This is what we are going to show thanks to a $L^2$ hypocoercivity method. In the strongly nonlinear regime, which is of high interest for applications, there are theoretical \cite{cai2011linear}, numerical \cite{venturini2005coherent} and experimental \cite{evain2017direct} evidences in the literature  of instabilities or time-periodic limit cycles. The former have been shown in some situations to be stable, or stabilizable via a control on the intensity of the confinement, namely the parameter $\alpha$ (see \cite{evain2019stable}). In the context of particle accelerator physics, these well-known complex behavior are referred to as microbunching instability.  The mathematical  study of this regime is out of the scope of the present paper.

Finally, let us mention that \eqref{eq:VFPnonlin} models only the longitudinal dependency of the electron bunch along the orbit. More complete models which include a transverse description have been developed \cite{cai2017coherent} in the context of particle accelerators. In different context there have also been many papers concerning the purely transverse description of beams \cite{degond_1993_paraxial,filbet2006modeling}.

\subsection*{Main results}

We first show the well-posedness and regularity of solutions of \eqref{eq:VFPnonlin}. 
\begin{theo}[Global well-posedness]\label{theo:well}
Let $f^\text{in}\in L^1(\RR^2)$ be non-negative and let $\partial_x K\in L^\infty(\RR)$. There exists a smooth classical solution  $f\in\mathcal{C}([0,\infty), L^1(\RR^2))\cap \mathcal{C}^{\infty}((0,\infty)\times\RR^2)$ to the Vlasov-Fokker-Planck equation \eqref{eq:VFPnonlin}. Moreover for $t>0$, the solution is non-negative, $f(t,x,v)\geq0$, and the total mass is conserved  
\[
\iint_{\RR^2} f(t,x,v) \dd x\dd v = \iint_{\RR^2} f^\text{in}(x,v) \dd x\dd v.
\]
Furthermore, for any $n,m\in\mathbb{N}$ there is $C_{T,n,m}>0$ such that for $t\in(0,T]$
\[
\|\partial_v^n\partial_x^mf(t)\|_{L^1(\RR^2)}\leq C_{T,n,m}\left(t^{-\frac n2-\frac{3m}{2}}+1\right)\|f^\text{in}\|_{L^1(\RR^2)}.
\]
Finally, the mapping $f^\text{in}\in L^1_+(\RR^2)\mapsto f\in\mathcal{C}([0,T], L^1_+(\RR^2))$ is Lipschitz continuous, where $L^1_+(\RR^2)$ denotes the cone of non-negative functions in $L^1(\RR^2)$.
\end{theo}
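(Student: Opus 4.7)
The natural approach, hinted at in the introduction, is to build the solution via a mild (Duhamel) formulation based on the explicit fundamental solution $G(t,x,v;y,w)$ of the linear equation \eqref{eq:VFPnonlin} with $I=0$. Since the confinement is quadratic, the linear operator is the forward Kolmogorov operator of a linear damped Langevin process, and $G$ is an explicit (degenerate) Gaussian kernel whose mean and covariance solve the underlying $2\times2$ linear ODE system. From the classical Kolmogorov hypoellipticity scaling $v\sim t^{1/2}$, $x\sim t^{3/2}$ induced by the transport term $v\partial_x$, one obtains the derivative estimates
\[
\|\partial_x^m\partial_v^n G(t,\cdot,\cdot;y,w)\|_{L^1(\RR^2)}\leq C_T\,t^{-\tfrac n2-\tfrac{3m}{2}}, \qquad t\in(0,T],
\]
uniformly in $(y,w)$. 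These are the regularizing estimates at the heart of the proof.

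After an integration by parts to shift the $\partial_v$ from the nonlinearity onto $G$, a mild solution of \eqref{eq:VFPnonlin} should solve
\[
f(t)\;=\;G(t)\ast f^\text{in}\;-\;\int_0^t\!\!\iint \partial_wG(t-s,\cdot,\cdot;y,w)\,F_{f(s)}(y)\,f(s,y,w)\,\dd y\dd w\dd s.
\]
Since mass is formally conserved and $\|F_g\|_{L^\infty}\leq I\|\partial_xK\|_{L^\infty}\|g\|_{L^1}$, the nonlinearity enters only through the $L^1$ norm of $f$, and the time singularity $(t-s)^{-1/2}$ induced by $\partial_w G$ is integrable. A standard Banach fixed point argument on $\mathcal{C}([0,\tau];L^1(\RR^2))$ then provides a local solution, which extends globally thanks to the a priori bound on the mass. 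The same contraction estimate, applied to two solutions with different initial data, yields the Lipschitz dependence. Non-negativity follows from the non-negativity of $G$ and of $f^\text{in}$, either by direct inspection of a positivity-preserving Picard iteration or via the McKean-Vlasov interpretation of \eqref{eq:VFPnonlin}; mass conservation is immediate by integrating the mild formulation.

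The regularity statement, and the upgrade from a mild to a classical $\mathcal{C}^\infty$ solution, come from a bootstrap: differentiating the Duhamel formula and estimating $\partial_x^m\partial_v^n f$ by induction on $n+m$. At each step, one distributes the derivatives between $G(t-s,\cdot)$ and the integrand $F_{f(s)}\,f(s,\cdot)$ and chooses, in each resulting term, a split for which the time singularity $(t-s)^{-\alpha}$ still satisfies $\alpha<1$, closing the bound via the induction hypothesis on lower-order derivatives. Time regularity is then recovered directly from the equation itself.

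The main technical difficulty, in my view, lies in the handling of $x$-derivatives inside the nonlinear term. An $x$-derivative of $G(t-s)$ carries a cost of $3/2$ in time, so placing too many $\partial_x$ on the kernel would make the time integral diverge; but since $K$ is only $W^{1,\infty}$, one cannot place $x$-derivatives directly on $F_{f}=-I(\partial_xK)\ast\rho$ either. One is forced to commute each $x$-derivative through the convolution with $K$ and onto $\rho$, trading against the regularity of $f$ already established at the previous induction step. The delicate balancing of these two constraints, while keeping all time singularities integrable, is precisely what produces the exponent $n/2+3m/2$ stated in the theorem.
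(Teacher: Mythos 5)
Your overall strategy coincides with the paper's: explicit Gaussian fundamental solution $G$ of the linear ($I=0$) problem, Duhamel formulation with the $\partial_v$ moved onto the kernel as $\partial_w G$, contraction/iteration in $\mathcal{C}([0,T];L^1)$ made global via mass conservation, and a bootstrap by induction on derivatives for the $\mathcal{C}^\infty$ regularity. You also correctly diagnose the main technical point in the regularity step: since $K$ is only $W^{1,\infty}$, $x$-derivatives must be routed away from $F_f$ and onto $\rho$ (equivalently, traded against lower-order estimates on $f$), while each $\partial_x$ landing on the kernel costs $t^{-3/2}$; this is exactly how the paper distributes derivatives, including the splitting of the time integral at $t/2$ to keep all singularities integrable.

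There is, however, a genuine gap in your treatment of non-negativity. You assert that it "follows from the non-negativity of $G$ and of $f^\text{in}$, either by direct inspection of a positivity-preserving Picard iteration or via the McKean-Vlasov interpretation." But the Duhamel term in the mild formulation involves the kernel $\partial_w G$, not $G$, and $\partial_w G$ is not sign-definite. Consequently neither the Picard iterates $f^{n+1}=\mathcal{G}f^\text{in}+\mathcal{H}_{F^n}f^n$ nor the nonlinear fixed point iteration are positivity-preserving "by inspection." The paper flags this explicitly ("the non-negativity\dots is not obvious on the mild formulation") and resolves it with a mollification argument à la Lions--Masmoudi: one mollifies $f$, shows the mollified object satisfies the linear PDE up to a controllable error, tests against $-2g_\eps^-$, and passes to the limit after Gr\"onwall. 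This is also not a cosmetic point in the logical structure, because the paper uses positivity of each iterate to convert $L^1$-norm conservation into mass conservation (giving the uniform bound $\|F^n\|_{L^\infty}\leq I\|\partial_xK\|_{L^\infty}\|f^\text{in}\|_{L^1}$ that closes the global iteration). The McKean--Vlasov/SDE route you mention would in principle give positivity for free, but it is a genuinely different proof strategy requiring its own well-posedness argument for the interacting particle system, and you do not develop it; as written, your plan for positivity does not go through.
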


The proof of this result in Section~\ref{sec:proof_theo_1} is based on the construction of mild $\mathcal{C}([0,\infty), L^1(\RR^2))$ solutions using the explicit fundamental solution of the linear Vlasov-Fokker-Planck equation. Regularization estimates also rely on precise estimates on the fundamental solution which are then used in the Duhamel formulation of the nonlinear equation.

Then, we show the following theorem concerning stationary states of \eqref{eq:VFPnonlin}.
\begin{theo}[Steady states]\label{theo:steady}
For any $\mathcal{M}>0$ and $K\in W^{1,p}(\RR)$, $p\in[1,\infty]$, there exists a stationary solution $f^\infty$ of \eqref{eq:VFPnonlin} with mass $\mathcal{M}$ which is of the form \eqref{eq:steadystate} where $\sigma^\infty$ is a fixed point of the mapping
\[
\cT(\sigma)(x):=\dfrac{e^{-\frac{\alpha x^2}{2\theta}- \frac{I \mathcal{M}}{\theta}K\ast\sigma(x)}}{\int_\RR e^{-\frac{\alpha y^2}{2\theta} - \frac{I \mathcal{M}}{\theta}K\ast\sigma(y)}\dd y},
\]
which is such that $\sup_{x\in\RR}|\partial_x^n\sigma(x)e^{\frac{\beta x^2}{2}}|<\infty$ for any  $0<\beta<\alpha/\theta$ and $n\in\NN$. Moreover, for any $K\in L^\infty(\RR)$, there is $C>0$ such that if 
\[
I < I^\text{thres}:=\frac{C \theta}{\mathcal{M}\|K\|_{L^\infty}},
\]
then there is a unique integrable stationary solution of the form \eqref{eq:steadystate}.
\end{theo}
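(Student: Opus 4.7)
\textbf{Proof proposal for Theorem \ref{theo:steady}.}
First, a direct computation shows that if $\sigma^\infty$ is a fixed point of $\cT$ and $f^\infty$ is given by \eqref{eq:steadystate}, then $f^\infty$ is stationary: the Maxwellian in $v$ is annihilated by the Fokker--Planck operator $2\nu\pa_v(v\,\cdot + \theta\pa_v\,\cdot)$, while the left-hand side of \eqref{eq:VFPnonlin} vanishes provided $\theta\pa_x\ln\sigma^\infty = -\alpha x - I\cM\,\pa_xK\ast\sigma^\infty$. Exponentiating and renormalizing to a probability density yields exactly $\sigma^\infty = \cT(\sigma^\infty)$, so the problem reduces to finding fixed points of $\cT$ (with total mass $\cM$ absorbed in the definition of $f^\infty$).

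For existence I would apply Schauder's theorem. The 1D embedding $W^{1,p}(\RR)\hookrightarrow L^\infty(\RR)$ lets me set $A:=I\cM\|K\|_{L^\infty}/\theta<\infty$, and since $|K\ast\sigma(x)|\leq\|K\|_{L^\infty}$ for any probability density $\sigma$, controlling both numerator and normalization of $\cT(\sigma)$ gives the pointwise bounds
\[
e^{-2A}\,g(x) \;\leq\; \cT(\sigma)(x) \;\leq\; e^{2A}\,g(x), \qquad g(x):=\sqrt{\tfrac{\alpha}{2\pi\theta}}\,e^{-\alpha x^2/(2\theta)}.
\]
The convex set $\mathcal{K}:=\{\sigma\in L^1(\RR):\sigma\geq 0,\ \int\sigma=1,\ \sigma\leq e^{2A}g\ \text{a.e.}\}$ is therefore invariant under $\cT$ and relatively compact in $L^1(\RR)$ (the Gaussian envelope supplies both tightness and uniform integrability). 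The map $\cT$ is continuous on $(\mathcal{K},L^1)$ because $\sigma\mapsto K\ast\sigma$ is continuous from $L^1$ to $L^\infty$ when $K\in L^\infty$, and composing with the smooth exponential/normalization preserves continuity. Schauder then yields a fixed point $\sigma^\infty\in\mathcal{K}$.

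For the smoothness and decay of derivatives, once $\sigma^\infty\in\mathcal{K}$ its explicit form immediately provides Gaussian decay at rate $\alpha/\theta$. Logarithmic differentiation yields
\[
\pa_x\sigma^\infty = -\sigma^\infty\left(\frac{\alpha x}{\theta} + \frac{I\cM}{\theta}\,\pa_xK\ast\sigma^\infty\right),
\]
and $\pa_xK\in L^p$ convolved with $\sigma^\infty\in L^{p'}$ (Gaussian decay ensures every integrability) produces a bounded function, so $|\pa_x\sigma^\infty|$ is controlled by a polynomial times $\sigma^\infty$, hence satisfies the decay estimate for any $\beta<\alpha/\theta$. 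For higher $n$ I would bootstrap via the identity $\pa_x^n K\ast\sigma^\infty = \pa_xK\ast\pa_x^{n-1}\sigma^\infty$ (integration by parts, legitimate thanks to the Gaussian tails at each step), avoiding any assumption of $K\in W^{n,p}$ for $n\geq 2$. The inductive hypothesis that $\pa_x^{n-1}\sigma^\infty$ enjoys Gaussian decay keeps all convolutions bounded, and differentiating the explicit expression yields at most polynomial factors in $x$ multiplying $\sigma^\infty$, which are absorbed by any $\beta<\alpha/\theta$.

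For uniqueness in the weakly nonlinear regime, I would upgrade $\cT$ to a strict contraction on $\mathcal{K}$ in $L^1$-norm. Using $|e^a-e^b|\leq\max(e^a,e^b)\,|a-b|$ together with Young's bound $\|K\ast(\sigma_1-\sigma_2)\|_{L^\infty}\leq\|K\|_{L^\infty}\|\sigma_1-\sigma_2\|_{L^1}$ and careful handling of the ratio $Z_{\sigma_1}/Z_{\sigma_2}$ of normalization constants, one obtains
\[
\|\cT(\sigma_1)-\cT(\sigma_2)\|_{L^1} \;\leq\; \Lambda(A)\,\frac{I\cM\|K\|_{L^\infty}}{\theta}\,\|\sigma_1-\sigma_2\|_{L^1},
\]
where $\Lambda(A)$ depends only on $e^{2A}$ and is bounded for $I<I^\text{thres}$. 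Choosing the constant $C$ in $I^\text{thres}$ small enough makes the prefactor strictly less than one, so Banach's theorem gives a unique fixed point in $\mathcal{K}$, and hence a unique stationary solution of the form \eqref{eq:steadystate} (uniqueness among all integrable stationary states of that form follows because any such state automatically lies in some $\mathcal{K}$ up to scaling). The main technical obstacle is this last contraction estimate: the normalization $Z_\sigma$ depends nonlinearly on $\sigma$ through an exponential of a convolution, so decomposing $\cT(\sigma_1)-\cT(\sigma_2)$ into pieces one can bound without losing the smallness factor $I$ requires some bookkeeping.
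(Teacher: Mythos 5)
Your overall strategy---reduce to fixed points of $\cT$, a compactness fixed-point theorem for existence, an $L^1$ contraction for uniqueness---is the same as the paper's, and your reduction to $\cT$, the bootstrap for $\partial_x^n\sigma^\infty$ via $\partial_x^n(K\ast\sigma^\infty)=(\partial_xK)\ast\partial_x^{n-1}\sigma^\infty$, and the contraction sketch are all sound. The route differs in that the paper works with Schaefer's theorem in the weighted space $\cX$ of continuous functions with Gaussian decay (the constraint $\lambda\cT(\sigma)=\sigma$ there delivers a free a priori bound $\|\sigma\|_{L^1}=\lambda\le 1$, and compactness of $\cT$ follows from Arzel\`a--Ascoli using derivative bounds), whereas you pick Schauder on an explicit invariant convex set $\mathcal K\subset L^1$.

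There is, however, a genuine gap in your compactness step. You claim that $\mathcal K=\{\sigma\ge 0,\ \int\sigma=1,\ \sigma\le e^{2A}g\ \text{a.e.}\}$ is relatively compact in $L^1(\RR)$ because the Gaussian envelope gives tightness and uniform integrability. Those two properties give, by Dunford--Pettis, relative \emph{weak} compactness in $L^1$, not strong compactness. The set $\mathcal K$ is not precompact in the $L^1$ norm: for instance the densities $\sigma_n(x)=g(x)\bigl(1+\tfrac12\sin(nx)\bigr)$ (after a harmless renormalization) lie in $\mathcal K$, converge weakly to $g$, but have no strongly convergent subsequence. Dominated envelopes control concentration and escape to infinity but not oscillation. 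To apply Schauder you need the \emph{image} $\cT(\mathcal K)$ to be relatively compact, and that requires an equicontinuity-type estimate in addition to the Gaussian bound. This is exactly where the paper uses the uniform bound on $\partial_x\cT(\sigma)$ (Step 1 of the proof of Proposition~\ref{prop:existence_steady}), which is available because $\partial_xK\in L^p$ and $\sigma$ has Gaussian tails; combined with the Gaussian decay this yields precompactness via Arzel\`a--Ascoli (or Fr\'echet--Kolmogorov in $L^1$). Adding that one lemma closes the gap and your Schauder argument then goes through.
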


The proof of this result in Section~\ref{sec:proof_theo_2} relies on the use of a Schaefer fixed point theorem. The key element is to define the right fixed point formulation, \emph{i.e.} the right mapping $\cT$. This normalized formulation is inspired by the work of Warnock et. al. \cite{warnock2000general, warnock2018numerical}. It allows for a free $L^1$ bound on $\cT(\sigma)$, which is crucial in the proof. Let us mention that a similar uniqueness result was claimed in \cite{warnock2000general}.

{
Finally, we prove the local nonlinear asymptotic stability of Haissinski steady states. Given $f^\infty$, we introduce the $L^2$ space for the measure $\dd\mu^\infty = (f^\infty)^{-1}\dd x\dd v$, characterized by its norm 
\[
\|g\|_{L^2(\mu^\infty)} = \|g(f^\infty)^{-\frac12}\|_{L^2}.
\]
Then we have the following result.
\begin{theo}[Long-time behavior]\label{theo:long}
Let $f^\infty$ be a steady state given by Theorem~\ref{theo:steady} and $f$ be the solution of \eqref{eq:VFPnonlin}. There are constants $C_1, C_2$ that depend only on $\alpha,\theta,\nu$ and $\|K\|_{W^{1,\infty}}$ such that if the initial data is such that
\begin{equation}\label{assumption}
I \mathcal{M} \leq C_1,
\end{equation}
and
\begin{align} \label{eq:Assumptiongin}
    \sqrt{\mathcal{M}}\|f^\text{in}-f^\infty \|_{L^2(\mu^\infty)} \leq C_2,
\end{align}
then there exists $\lambda$ that depend only on $\alpha,\theta,\nu$ and $\|K\|_{W^{1,\infty}}$, as well as a constant $C_3>1$ depending additionally on $\mathcal{M}$ such that
\begin{align*}
    ||f(t)-f^\infty\|_{L^2(\mu^\infty)} \leq C_3 e^{-\lambda t}\|f^\text{in}-f^\infty \|_{L^2(\mu^\infty)}\,,\quad \text{for}\ t\geq0.
\end{align*}
\end{theo}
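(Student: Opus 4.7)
By Theorem~\ref{theo:steady} the steady state has Gibbs form $f^\infty(x,v)=\mathcal{Z}_\infty^{-1}\exp\bigl(-\Phi(x)/\theta-v^2/(2\theta)\bigr)$ with effective confining potential
\begin{equation*}
\Phi(x)=\frac{\alpha x^2}{2}+I\mathcal{M}\,K\ast\sigma^\infty(x).
\end{equation*}
Writing $f=f^\infty+h$, mass conservation gives $\iint h\,\dd x\,\dd v=0$ and subtracting the stationary equation yields
\begin{equation*}
\partial_t h+\mathcal{L}_0 h=-F_h\,\partial_v f^\infty-F_h\,\partial_v h,
\end{equation*}
where $\mathcal{L}_0 h=v\partial_x h-\Phi'(x)\partial_v h-2\nu\partial_v(vh+\theta\partial_v h)$ is a linear kinetic Fokker--Planck operator in the external potential $\Phi$ and $F_h=-I\partial_x K\ast\rho_h$ is linear in $h$. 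The two source terms on the right are respectively a linearised mean-field forcing and the genuinely quadratic self-consistent nonlinearity.

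\textbf{Hypocoercivity of $\mathcal{L}_0$.} Since $K\in W^{1,\infty}$ and $I\mathcal{M}\leq C_1$, the second piece of $\Phi$ is uniformly bounded by $C_1\|K\|_{L^\infty}$, so Holley--Stroock's perturbation lemma applied to the Gaussian $e^{-\alpha x^2/(2\theta)}\,\dd x$ furnishes a Poincaré inequality for $\sigma^\infty\,\dd x$ with constant depending only on $\alpha,\theta,\|K\|_{L^\infty}$; tensorising with the Gaussian-in-$v$ inequality gives a Poincaré inequality for $f^\infty$ on the subspace of zero-mass functions. I then apply the $L^2$-hypocoercivity scheme of \cite{villani_2009_hypocoercivity,DMS}: setting $u=h/f^\infty$ and denoting by $\langle\cdot,\cdot\rangle$ the inner product of $L^2(f^\infty\,\dd x\,\dd v)=L^2(\mu^\infty)$, I build
\begin{equation*}
\cE[h]=\|u\|^2+a\|\partial_v u\|^2+2b\langle\partial_v u,\partial_x u\rangle+c\|\partial_x u\|^2,
\end{equation*}
with small positive weights $a,b,c$ satisfying $b^2<ac$, so that $\cE[h]\simeq\|h\|^2_{L^2(\mu^\infty)}$ with equivalence constants that inherit a dependence on $\mathcal{M}$ through the normalisation of $f^\infty$. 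The commutator identity $[\partial_v,T]=\partial_x$ (with $T=v\partial_x-\Phi'\partial_v$) produces the missing $\partial_x$-dissipation, and tuning $(a,b,c)$ carefully yields $\tfrac{d}{dt}\cE[h(t)]\leq-2\kappa\,\cE[h(t)]$ along $\partial_t h+\mathcal{L}_0 h=0$ for a constant $\kappa>0$ depending only on $\alpha,\theta,\nu,\|K\|_{W^{1,\infty}}$.

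\textbf{Absorbing the mean-field terms and bootstrap.} The workhorse estimate is
\begin{equation*}
\|F_h\|_{L^\infty}\leq I\|\partial_xK\|_{L^\infty}\|\rho_h\|_{L^1}\leq I\|\partial_xK\|_{L^\infty}\sqrt{\mathcal{M}}\,\|h\|_{L^2(\mu^\infty)},
\end{equation*}
obtained by Cauchy--Schwarz against $f^\infty$ using $\iint f^\infty=\mathcal{M}$. Exploiting $\partial_v f^\infty=-vf^\infty/\theta$ and integrating by parts, the linear source contributes at most $c_1 I\sqrt{\mathcal{M}}\,\cE[h]$ to $\tfrac{d}{dt}\cE[h]$, while the quadratic term $F_h\partial_v h$—after an integration by parts exchanging a $v$-derivative for a factor $v/\theta$—contributes $c_2\sqrt{\mathcal{M}}\|h\|_{L^2(\mu^\infty)}\,\cE[h]$, so that
\begin{equation*}
\tfrac{d}{dt}\cE[h(t)]\leq-\bigl(2\kappa-c_1 I\sqrt{\mathcal{M}}-c_2\sqrt{\mathcal{M}}\|h(t)\|_{L^2(\mu^\infty)}\bigr)\cE[h(t)].
\end{equation*}
Fixing $C_1$ so small that $c_1\sqrt{C_1}\leq\kappa/2$ absorbs the linearised mean-field term; a continuation argument based on the equivalence $\cE\simeq\|\cdot\|^2_{L^2(\mu^\infty)}$ absorbs the quadratic one provided $\sqrt{\mathcal{M}}\|h^{\mathrm{in}}\|_{L^2(\mu^\infty)}\leq C_2$ is small enough that $c_2 C_2\leq\kappa/2$ at $t=0$; the resulting exponential decay then preserves this smallness for all later times. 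This produces the announced rate $\lambda=\kappa/2$, and the constant $C_3=\sqrt{c_+/c_-}$ comes from the $\mathcal{M}$-dependent equivalence constants of $\cE$. The \emph{main difficulty} is the design of $\cE$: because the asymmetry~\eqref{eq:asym} of $K$ rules out any free-energy Lyapunov functional, the $L^2$-perturbative framework is essentially the only available route, and the modified norm must simultaneously dominate the $v$-weighted quantities generated by the integration by parts of $F_h\partial_v h$—this is the structural reason behind the smallness assumptions $I\mathcal{M}\leq C_1$ and $\sqrt{\mathcal{M}}\|h^{\mathrm{in}}\|_{L^2(\mu^\infty)}\leq C_2$.
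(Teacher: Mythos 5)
Your overall strategy—perturb around the Haissinski state, close an $L^2$-type hypocoercivity estimate, and absorb the mean-field terms via smallness of $I\mathcal{M}$ and of the initial perturbation—is the same as the paper's, and several of your workhorse bounds (Holley--Stroock for the spatial Poincaré inequality, $\|\rho_h\|_{L^1}\leq\sqrt{\mathcal{M}}\,\|h\|_{L^2(\mu^\infty)}$ by Cauchy--Schwarz) appear verbatim in the paper. However, there is a genuine gap in the central functional-analytic step.

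You construct a Villani-type modified entropy
\[
\cE[h]=\|u\|^2+a\|\partial_v u\|^2+2b\langle\partial_v u,\partial_x u\rangle+c\|\partial_x u\|^2,
\]
and then assert that with $b^2<ac$ one has $\cE[h]\simeq\|h\|^2_{L^2(\mu^\infty)}$. That equivalence is false: the condition $b^2<ac$ only makes the derivative part nonnegative, so $\cE[h]\geq\|h\|^2_{L^2(\mu^\infty)}$, but the upper bound $\cE[h]\lesssim\|h\|^2_{L^2(\mu^\infty)}$ would require controlling $\|\partial_v u\|$ and $\|\partial_x u\|$ by $\|u\|$, which cannot hold. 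A functional that contains $\|\nabla u\|^2$ is a weighted $H^1$ quantity, not equivalent to the $L^2(\mu^\infty)$ norm; the Villani $H^1$ framework produces a theorem in that stronger topology, not in the one announced. Since Theorem~\ref{theo:long} is stated purely in $L^2(\mu^\infty)$ (both the smallness hypothesis on $f^\text{in}-f^\infty$ and the decay estimate), your argument as written does not prove it: you would either have to strengthen the hypotheses to $H^1$ data, or first invoke the hypoelliptic regularization of Section~\ref{sec:cauchy} to gain derivatives at positive times—neither of which you do. The paper instead uses the Dolbeault--Mouhot--Schmeiser functional
\[
H[g]=\tfrac12\|g\|^2+\eps\langle Ag,g\rangle,\qquad A=\big(\Id+(T\Pi)^\ast T\Pi\big)^{-1}(T\Pi)^\ast,
\]
for which the bound $\|Ag\|\leq\tfrac12\|(\Id-\Pi)g\|$ makes $H[g]$ genuinely equivalent to $\|g\|^2$ with $\mathcal{M}$-independent constants $\tfrac{1-\eps}{2}$ and $\tfrac{1+\eps}{2}$; this is precisely what allows the argument to close in $L^2(\mu^\infty)$. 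A secondary difficulty you do not confront is that differentiating your $\cE$ along the nonlinear flow generates terms like $\langle\partial_v u,\partial_v(F_h\partial_v u)\rangle$ carrying two derivatives of $u$, which are not controlled by $\cE$; the one integration by parts you mention handles only the undifferentiated scalar product. The DMS route, working entirely with $g$ and the bounded operator $A$, avoids these higher-order commutators and is essentially the only way the smallness conditions in the statement come out in the $L^2(\mu^\infty)$ norm as written.
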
}

The proof of this result, in Section~\ref{sec:proof_theo_3} is based on the hypocoercivity framework of \cite{DMS}. The main technical challenge lies in the control of terms (both linear and nonlinear) coming from mean field interactions. In particular in order to derive the weak nonlinearity condition \eqref{assumption} and close estimates, one needs to precisely track the dependence of various constants on the parameters of the model (see for instance Lemma~\ref{lem:LinHypo}). All the constants appearing in Theorem~\ref{theo:long}, and their dependance on the parameters of the model, can be found in our proofs, in particular the dependence of the decay rate. However, let us mention that it is known that hypocoercivity estimates, even in simple cases do not yield sharp decay rates, even after carefully optimizing each inequality (see \cite{DMS}).

The locality condition \eqref{eq:Assumptiongin} can be replaced by a more restrictive condition on the smallness of mass $\mathcal{M}\leq C_2/2$, by using the embedding $L^2(\mu^\infty)\subset L^1(\RR^2)$. 

 Observe that we prove exponential decay to equilibrium in a smaller space than that of our well-posedness result in Theorem~\ref{theo:well}, since functions in $L^2(\mu^\infty)\subset L^1(\RR^2)$ decay at least like Gaussians at infinity.  Thanks to the regularization properties of \eqref{eq:VFPnonlin} and the space enlargement techniques developed in \cite{mishler_2016_exponential, gualdani2017factorization} it might be possible to extend our result to slowly decaying initial data.

Finally, the last results of this paper are contained in Section~\ref{sec:derivation} which is dedicated to the derivation of \eqref{eq:VFPnonlin}, and more precisely the derivation of the interaction kernel from Maxwell equations. We formalize mathematically  in Proposition~\ref{prop:mur} and Proposition~\ref{prop:approx}, the derivation of the free space wakefield (due to Murphy, Krinsky and Gluckstern \cite{murphy1996longitudinal}) which is the typical example of asymmetric interaction kernel $K$ arising in particle accelerator physics (see Figure~\ref{fig:wakefield} in Section~\ref{sec:derivation} for an illustration).

\subsection*{Outline}

The rest of the paper is organized as follows. In Section~\ref{sec:cauchy} we prove the global well-posedness result of Theorem~\ref{theo:well}. In Section~\ref{sec:long}, we are concerned with the long-time behavior of solutions, and prove Theorem~\ref{theo:steady} and Theorem~\ref{theo:long}. Finally, in Section~\ref{sec:derivation}, we give details about the derivation of the model and the wakefield potential in particular in Proposition~\ref{prop:mur} and Proposition~\ref{prop:approx}.

\subsubsection*{Acknowledgements} The authors warmly thank Serge Bielawski, Clément Evain, Eléonore Roussel and Christophe Szwaj for enlightening discussions on the model \eqref{eq:VFPnonlin} in the context of particle accelerator physics.\\
M.H. acknowledges support from the CYNA project of the CY Initiative of Excellence. 

\section{The Cauchy problem}\label{sec:cauchy}
In this section we address the well-posedness of the nonlinear Vlasov-Fokker-Planck equation \eqref{eq:VFPnonlin}. We build mild solutions to the equation using the explicit formula for the fundamental solution of the Vlasov-Fokker-Planck equation. Then we show that any mild solution is actually a smooth classical solution. 
\subsection{Fundamental solution of linear VFP}

We first compute the fundamental solution of the Vlasov-Fokker-Planck equation with  a confining potential. This type of computations are classical and date back to Chandrasekhar \cite{chandrasekhar_1943_stochastic}. The formula for the fundamental solution of VFP without external forcing term can be found for instance in \cite{chandrasekhar_1943_stochastic, victory_1991_existence, bouchut_1993_existence}. Here we compute the $G\equiv G(\tau,x,y,v,w)$ solving  
\begin{equation}\label{eq:VFPfundamental}
\partial_\tau G + v\partial_xG -\alpha x\partial_v G = 2\nu\partial_v(vG +\theta \partial_v G)\,, \quad \tau>0,\ (x,v)\in\RR^2
\end{equation}
and such that for any integrable function $f^\text{in}\in L^1(\RR^2)$ one has 
\[
\lim_{\tau\to0^+}\iint_{\RR^2}G(\tau,x,y,v,w)f^\text{in}(y,w)\dd y \dd w = f^\text{in}(x,v),\quad \text{a.e. }(x,v)\in\RR^2
\]
In other words $G(0,x,y,v,w) = \delta_0(x-y)\delta_0(v-w)$.

\begin{prop}\label{prop:fundamental}
The fundamental solution of \eqref{eq:VFPfundamental} is given by the following bivariate Gaussian
\begin{equation}\label{eq:Gformula}
G = \frac{1}{2\pi\sqrt{\mathrm{det}\Sigma(\tau)}}\exp\left(-\frac{1}{2}\mu^\top \Sigma(\tau)^{-1}\mu\right),\quad
\text{where} \quad \mu = \binom{x}{v} - B(\tau)\binom{y}{w},
\end{equation}
and the matrices $B(\tau)$ and $\Sigma(\tau)$ are defined as follows. Let $\omega$ real or pure imaginary such that $\omega^2 = \alpha-\nu^2$ (take any root) and define $s(\tau) = \frac{\sin(\omega\tau)}{\omega}$ (if $\omega = 0$ then $s(\tau)=\tau$) and $c(\tau) = \cos(\omega \tau)$. Then
\[
\qquad B(\tau) = e^{-\nu\tau}\left(\begin{matrix}
 c( \tau) + \nu s( \tau)
&s( \tau)\\
- \alpha s( \tau) &  c( \tau) - \nu s( \tau)
\end{matrix}\right)\,,
\]
and the covariance matrix is 
\[
\Sigma(\tau) = \left(\begin{matrix}\theta/\alpha&0\\
0&\theta
\end{matrix}\right)-\theta e^{-2\nu\tau}\left(\begin{matrix}s^2( \tau) + (\nu s( \tau)+ c( \tau))^2/\alpha&-2\nu s^2( \tau)\\
-2\nu s^2( \tau)&\alpha s^2( \tau)+(\nu s( \tau)- c( \tau))^2
\end{matrix}\right)\,,
\]
with determinant
\[
\det(\Sigma(\tau)) = \frac{4\theta^2}{\alpha} e^{-2\nu\tau}\left(\sinh^2(\nu\tau)-\nu^2s^2(\tau)\right).
\]
\end{prop}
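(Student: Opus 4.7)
The plan is to exploit the linearity of the drift and the constancy of the diffusion in \eqref{eq:VFPfundamental}: this is the forward Kolmogorov equation of the linear SDE
\begin{equation*}
dX_\tau = V_\tau \, d\tau, \qquad dV_\tau = -(\alpha X_\tau + 2\nu V_\tau)\, d\tau + 2\sqrt{\nu\theta}\, dB_\tau,
\end{equation*}
whose solution starting from the deterministic point $(y,w)$ is Gaussian at every time $\tau>0$. I therefore adopt the Gaussian ansatz \eqref{eq:Gformula} and identify the mean matrix $B(\tau)$ and the covariance matrix $\Sigma(\tau)$ by reducing to linear matrix ODEs. Equivalently, one may insert the quadratic-exponential ansatz directly into the PDE and match coefficients.

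Taking first moments of $G$, one finds that $B(\tau) = e^{A\tau}$ with $A = \left(\begin{smallmatrix} 0 & 1 \\ -\alpha & -2\nu \end{smallmatrix}\right)$ and $B(0) = \Id$. Writing $A = -\nu\Id + N$ with $N = A + \nu\Id$, the Cayley-Hamilton identity gives $N^2 = -\omega^2 \Id$ with $\omega^2 = \alpha - \nu^2$, and therefore $e^{A\tau} = e^{-\nu\tau}(c(\tau)\Id + s(\tau)N)$, which componentwise recovers the stated $B(\tau)$. The covariance then satisfies the Lyapunov equation $\partial_\tau \Sigma = A\Sigma + \Sigma A^\top + D$ with $D = \mathrm{diag}(0,4\nu\theta)$ and $\Sigma(0) = 0$. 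A direct $2\times 2$ check shows that the Maxwell-Boltzmann covariance $\Sigma_\infty := \mathrm{diag}(\theta/\alpha,\theta)$ satisfies $A\Sigma_\infty + \Sigma_\infty A^\top + D = 0$, and subtracting from the Lyapunov equation shows that $\Sigma_\infty - \Sigma$ solves the homogeneous matrix ODE with initial datum $\Sigma_\infty$, whence
\begin{equation*}
\Sigma(\tau) = \Sigma_\infty - B(\tau)\Sigma_\infty B(\tau)^\top.
\end{equation*}
Expanding this $2\times 2$ product with the explicit $B(\tau)$ yields the formula announced in the statement.

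For the determinant, direct computation on the resulting matrix, using the identity $\alpha s(\tau)^2 + c(\tau)^2 = 1 + \nu^2 s(\tau)^2$ (which holds in both the $\omega^2 > 0$ and $\omega^2 < 0$ regimes, since it encodes the ODE relations $c' = -\omega^2 s$, $s' = c$), collapses the cross terms to
\begin{equation*}
\det\Sigma(\tau) = \frac{\theta^2}{\alpha}\bigl[(1-e^{-2\nu\tau})^2 - 4\nu^2 e^{-2\nu\tau} s(\tau)^2\bigr],
\end{equation*}
which factors into the stated expression using $(1-e^{-2\nu\tau})/(2 e^{-\nu\tau}) = \sinh(\nu\tau)$. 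The limit $G(\tau,\cdot)\to\delta_0(\cdot - y)\delta_0(\cdot-w)$ as $\tau\to 0^+$ then follows from $B(0)=\Id$ together with $\det\Sigma(\tau) = O(\tau^4)$ near $\tau=0$, readable off the determinant formula using $\sinh(\nu\tau) - \nu\tau = O(\tau^3)$, which forces $G$ to concentrate at $(y,w)$ as a normalized Gaussian with vanishing covariance. The main obstacle is not conceptual but algebraic: expanding $B(\tau)\Sigma_\infty B(\tau)^\top$ and its determinant demands some careful bookkeeping of the trigonometric and hyperbolic identities, but everything else reduces to standard linear-Gaussian computations.
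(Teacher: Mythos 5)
Your argument is correct, and it takes a genuinely different route from the paper. The paper proves the formula by Fourier transforming in $(x,v)$, which turns the PDE into a first-order transport equation in $(\eta,\xi)$; solving by characteristics produces the Gaussian $\hat G$ with covariance $\Sigma(\tau) = 4\nu\theta\int_0^\tau B_2(s)B_2(s)^\top\,\dd s$ (the Duhamel form), after which one inverse Fourier transforms. You instead read off the equivalent linear SDE (an Ornstein--Uhlenbeck process in $(x,v)$), note that its transition density is Gaussian, obtain the mean matrix $B(\tau)=e^{A\tau}$ and compute it cleanly via the $A=-\nu\,\Id+N$ with $N^2=-\omega^2\Id$ decomposition, and obtain the covariance via the Lyapunov ODE $\partial_\tau\Sigma = A\Sigma+\Sigma A^\top+D$. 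The conceptually nice step is subtracting the stationary solution $\Sigma_\infty=\mathrm{diag}(\theta/\alpha,\theta)$, which collapses the Duhamel integral into the closed form $\Sigma(\tau)=\Sigma_\infty - B(\tau)\Sigma_\infty B(\tau)^\top$; this is exactly equivalent to the paper's integral formula but avoids integrating products of trigonometric functions explicitly and also makes the long-time limit $\Sigma(\tau)\to\Sigma_\infty$ immediate. Your determinant computation, using $\alpha s^2 + c^2 = 1+\nu^2 s^2$ to reduce $\det\Sigma$ to $\tfrac{\theta^2}{\alpha}\bigl[(1-e^{-2\nu\tau})^2 - 4\nu^2 e^{-2\nu\tau}s^2\bigr]$, and the recovery of $\det\Sigma = O(\tau^4)$ for the $\tau\to0^+$ initial-condition check, are both correct; in fact the paper does not spell out the determinant computation at all, so your version is more complete on that point. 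In short: same Gaussian structure, same matrices, but Fourier-plus-characteristics in the paper versus SDE-plus-Lyapunov-with-stationary-subtraction in yours, and the latter is somewhat more transparent algebraically.
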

\begin{proof}
In Fourier variables $\hat{G}\equiv \hat{G}(\tau,\eta,y,\xi,w)$ solves
\[
\begin{array}{l}
\partial_\tau \hat{G} - \eta\partial_\xi \hat{G} +\alpha \xi\partial_\eta \hat{G} = -2\nu\xi\partial_\xi\hat{G} -2\nu\theta \xi^2 \hat{G}\,,
\end{array}
\]
starting at $\hat{G}(0,\eta,y,\xi,w) = \exp(-i\eta y-i\xi w)$. Define
\[A = \left(\begin{matrix}
0&1\\
-\alpha&-2\nu
\end{matrix}\right),\quad\text{and}\quad B(\tau) = e^{A\tau}.
\]
Using the method of characteristics, one finds that this transport equation is readily solved by
\[
\hat{G}(\tau,\eta,y,\xi,w) = \exp\left(-iyN(\tau,\eta,\xi)-iwZ(\tau,\eta,\xi)-2\nu\theta\int_0^\tau Z(s,\eta,\xi)^2\dd s \right)
\]
with 
\[
\binom{N}{Z}(\tau,\eta,\xi) = B(\tau)^\top\binom{\eta}{\xi}.
\]
Let $B_2(\tau)$ be the second column of the matrix $B(\tau) = (B_1(\tau), B_2(\tau))$ and define 
\[\Sigma(\tau) = 4\nu\theta\int_0^\tau B_2(\tau)B_2(\tau)^\top\dd s\]
Then $\hat{G}$ is given by the Gaussian
\[
\hat{G}(\tau,\eta,y,\xi,w) = \exp\left(-i\binom{\eta}{\xi}^\top B(\tau)\binom{y}{w} -  \frac12\binom{\eta}{\xi}^\top\Sigma(\tau) \binom{\eta}{\xi}\right),
\]
which yields the result, by inverse Fourier transform.

\end{proof}

An immediate consequence of  \eqref{eq:Gformula} is the following.

\begin{rema}[Long time behavior]
Observe that for all $x,y,v,w\in\RR$ one has
\[
\lim_{\tau\to\infty}G(\tau,x,y,v,w) = \frac{\sqrt{\alpha}}{2\pi\theta}\exp\left(-\frac{\alpha x^2}{2\theta}-\frac{v^2}{2\theta}\right).
\]
\end{rema}

From the explicit formula \eqref{eq:Gformula} we now derive estimates on the $G$.

\begin{prop}[Kernel estimates]\label{prop:ker}
For all $\tau>0$ and $x,y,v,w,\in\RR$, \[G(\tau,x,y,v,w)>0\] and 
\begin{equation}\label{eq:massconservGdwdy}
\iint_{\RR^2}G(\tau,x,y,v,w)\dd x\dd v = 1,
\end{equation}
\begin{equation}\label{eq:LinfgrowthGdwdy}
\iint_{\RR^2}G(\tau,x,y,v,w)\dd y\dd w = e^{2\nu\tau}.
\end{equation}
Moreover, for any $p\in[1,\infty]$
\begin{equation}\label{eq:Lpkernelxv}
\|G(\tau)\|_{L^\infty_{y,w}(L^p_{x,v})} \lesssim_{\theta,\alpha,\nu}\tau^{-2(1-\frac1p)}+1,
\end{equation}
and
\begin{equation}\label{eq:Lpkernelyw}
\|G(\tau)\|_{L^\infty_{x,v}(L^p_{y,w})} \lesssim_{\theta,\alpha,\nu}\left(\tau^{-2(1-\frac1p)}+1\right)e^{\frac{2\nu\tau}{p}},
\end{equation}
\end{prop}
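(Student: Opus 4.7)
The plan is to read off every statement of the proposition directly from the explicit Gaussian formula~\eqref{eq:Gformula}. Positivity is immediate because $G$ is a non-degenerate Gaussian density (one should first check that $\det\Sigma(\tau)>0$ for $\tau>0$, see below). For the first integral identity, \eqref{eq:massconservGdwdy} says that $G$ integrates to $1$ in the variables $(x,v)$: after the affine change of variable $\mu = (x,v)^\top - B(\tau)(y,w)^\top$, which has Jacobian one, $G$ becomes a standard bivariate Gaussian with covariance $\Sigma(\tau)$ and therefore integrates to $1$. For \eqref{eq:LinfgrowthGdwdy}, I would integrate in $(y,w)$ via the change of variable $\tilde\mu = B(\tau)(y,w)^\top$. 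The Jacobian is $|\det B(\tau)|$, which equals $e^{\tau\,\mathrm{tr}\,A} = e^{-2\nu\tau}$ (this can also be verified by computing $(c+\nu s)(c-\nu s) + \alpha s^2 = c^2 + (\alpha-\nu^2)s^2 = c^2 + \omega^2 s^2 = 1$ on the bracketed determinant of $B$). Thus $\dd y\,\dd w = e^{2\nu\tau}\,\dd\tilde\mu$, and after the change of variable $G$ again becomes a Gaussian of mean $(x,v)$ and covariance $\Sigma(\tau)$ in $\tilde\mu$, integrating to $1$. This yields the $e^{2\nu\tau}$ factor.

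For the $L^p$ bounds, the key remark is that translation in the mean does not affect the $L^p$ norm of a Gaussian. Therefore $\|G(\tau,\cdot,y,\cdot,w)\|_{L^p_{x,v}}$ does not depend on $(y,w)$, and by the same change of variable as above $\|G(\tau,x,\cdot,v,\cdot)\|_{L^p_{y,w}}^p = e^{2\nu\tau}\|G(\tau,\cdot,0,\cdot,0)\|_{L^p_{\mu}}^p$ (where I set the mean to zero for simplicity), independent of $(x,v)$. A direct computation for the bivariate Gaussian $\frac{1}{2\pi\sqrt{\det\Sigma}}e^{-\mu^\top\Sigma^{-1}\mu/2}$ gives
\[
\|G(\tau,\cdot,0,\cdot,0)\|_{L^p_\mu} = \frac{1}{(2\pi)^{1-1/p}\,p^{1/p}\,\det(\Sigma(\tau))^{(1-1/p)/2}}.
\]
Both estimates \eqref{eq:Lpkernelxv}--\eqref{eq:Lpkernelyw} therefore reduce to the two-sided control
\[
\det(\Sigma(\tau))^{1/2} \gtrsim_{\theta,\alpha,\nu}\ \min(\tau^{2},1),\qquad \tau>0.
\]

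The main (mildly) technical step is this quantitative lower bound on $\det\Sigma(\tau) = \frac{4\theta^2}{\alpha}e^{-2\nu\tau}\bigl(\sinh^2(\nu\tau) - \nu^2 s(\tau)^2\bigr)$. For large $\tau$, $e^{-2\nu\tau}\sinh^2(\nu\tau)\to 1/4$ whereas $e^{-2\nu\tau}\nu^2 s(\tau)^2 \to 0$ (both in the real and in the pure imaginary cases for $\omega$, using that $s(\tau)$ is either $\sin(\omega\tau)/\omega$ bounded or $\sinh(|\omega|\tau)/|\omega|$ with $|\omega|<\nu$), so $\det\Sigma(\tau)$ is bounded below by a positive constant for $\tau\geq 1$. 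For small $\tau$, a Taylor expansion gives $\sinh^2(\nu\tau) - \nu^2 s(\tau)^2 = \tfrac13 \nu^2(\nu^2 + \omega^2)\tau^4 + O(\tau^6) = \tfrac13 \nu^2\alpha\tau^4 + O(\tau^6)$, which yields $\det\Sigma(\tau)\sim \tfrac{4\theta^2\nu^2}{3}\tau^4$. Combining these two regimes gives the required lower bound and also $\det\Sigma(\tau)>0$ for all $\tau>0$, justifying the positivity of $G$ a posteriori. Substituting back gives \eqref{eq:Lpkernelxv}, and multiplying by the $e^{2\nu\tau/p}$ factor recovered from the Jacobian gives \eqref{eq:Lpkernelyw}.

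The only genuine obstacle is the uniform positivity of $\sinh^2(\nu\tau)-\nu^2 s(\tau)^2$: one must handle both the oscillatory case $\alpha>\nu^2$ and the hyperbolic case $\alpha\leq \nu^2$ in the formula for $s(\tau)$, and check that the leading $\tau^4$ behaviour near $\tau=0$ is indeed positive (which it is thanks to the $\nu^2\alpha$ coefficient). Everything else is bookkeeping on the explicit Gaussian.
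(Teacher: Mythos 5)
Your proof is correct and takes essentially the same route as the paper: both reduce everything to the explicit Gaussian formula, the Jacobian identity $\det B(\tau)=e^{-2\nu\tau}$, and the two-sided estimate $\det\Sigma(\tau)\gtrsim\min(\tau^4,1)$ obtained from the Taylor expansion at $\tau=0$ and the limit $\theta^2/\alpha$ at infinity. The only (cosmetic) difference is that you compute the $L^p$ norm of the bivariate Gaussian in closed form, whereas the paper gets \eqref{eq:Lpkernelxv} and \eqref{eq:Lpkernelyw} by interpolating the pointwise bound $|G|\lesssim\tau^{-2}+1$ against the $L^1$ identities \eqref{eq:massconservGdwdy} and \eqref{eq:LinfgrowthGdwdy}; both give the same exponents, and your version has the mild advantage of making the $\tau$-dependence transparent at once for all $p$.
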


\begin{proof}
The integrals are straightforward (just notice that $\det(B(\tau)) = e^{-2\nu\tau}$). Then we need a lower bound on $\det(\Sigma(\tau))$.  Observe that for all $\tau>0$, $\det(\Sigma(\tau))>0$. Moreover $\det(\Sigma(\tau)) = O(\tau^4)$ when $\tau\to 0$ and $\lim_{\tau\to\infty}\det(\Sigma(\tau)) = \theta^2/\alpha$. Thus $\det(\Sigma(\tau))\gtrsim \min(\tau^4,1)$. Therefore, for all $x,y,v,w,\in\RR$ one has
\[
|G(\tau,x, y, v, w)| \lesssim_{\theta,\alpha,\nu}\left(\frac{1}{\tau^2}+1\right).
\]
which yields \eqref{eq:Lpkernelxv} and \eqref{eq:Lpkernelyw} by interpolation between Lebesgue spaces.
\end{proof}

\begin{prop}[Kernel partials estimates]\label{prop:kerder}
For all $\tau>0$ and $v,w\in\RR$ one has
\begin{equation}\label{eq:deriv_massconservGdwdy}
\iint_{\RR^2}\partial_wG(\tau,x,y,v,w)\dd x\dd v = \iint_{\RR^2}\partial_yG(\tau,x,y,v,w)\dd x\dd v = 0
\end{equation}
Moreover for all $a,b,c,d\in\mathbb{N}$
\begin{equation}\label{eq:estimGderivatives}
\|\partial_x^a\partial_v^b\partial_y^c\partial_w^dG\|_{L^\infty_{x,v}(L^{1}_{y,w})}\lesssim \left(\tau^{-\frac{3a}{2}-\frac{b}{2}}+1\right)\tau^{-\frac{3c}{2}-\frac{d}{2}}
\end{equation}
\end{prop}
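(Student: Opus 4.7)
The identities \eqref{eq:deriv_massconservGdwdy} will follow at once from \eqref{eq:massconservGdwdy} by differentiating under the integral in $y$ or $w$; this is legitimate because $G$ and its partials have Gaussian decay in $(x,v)$ uniformly in $(y,w)$ on compact time intervals, as read off \eqref{eq:Gformula}. For the main estimate, the structural starting point is that $G$ depends on $(x,v,y,w)$ only through $\mu = (x,v)^\top - B(\tau)(y,w)^\top$. Writing $\tilde G_\tau(\mu)$ for $G$ viewed as a Gaussian in $\mu$ with covariance $\Sigma(\tau)$, the chain rule yields $\partial_x G = \partial_{\mu_1}\tilde G_\tau$, $\partial_v G = \partial_{\mu_2}\tilde G_\tau$, and
\begin{align*}
\partial_y G &= -B_{11}(\tau)\,\partial_x G - B_{21}(\tau)\,\partial_v G\,,\\
\partial_w G &= -B_{12}(\tau)\,\partial_x G - B_{22}(\tau)\,\partial_v G\,.
\end{align*}
Since the $B_{ij}(\tau)$ are scalars in the phase variables, iterating expresses $\partial_x^a\partial_v^b\partial_y^c\partial_w^d G$ as a finite linear combination of pure $(x,v)$-derivatives of $G$, weighted by products of $B_{ij}(\tau)$.

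To estimate $\|\partial_x^p\partial_v^q G\|_{L^\infty_{x,v}(L^1_{y,w})}$ for arbitrary $p,q\in\NN$, I change variables $(y,w)\mapsto m = B(\tau)(y,w)^\top$, with Jacobian $|\det B(\tau)|^{-1} = e^{2\nu\tau}$, and then set $\mu = (x,v)^\top - m$. This decouples $(x,v)$ entirely and reduces matters to computing $\|\partial_{\mu_1}^p\partial_{\mu_2}^q\tilde G_\tau\|_{L^1(\RR^2)}$, which is $(x,v)$-independent. Exploiting the eigenvalue structure of $\Sigma(\tau)$---whose eigenvalues scale like $\tau^3$ and $\tau$ as $\tau\to 0^+$, consistent with $\det\Sigma(\tau)\gtrsim\min(\tau^4,1)$ from the proof of Proposition~\ref{prop:ker}, and remain bounded from below on any interval $[\tau_0,T]$ with $\tau_0>0$---a diagonalization argument reduces this to one-dimensional Gaussian derivative estimates, yielding the bound $\tau^{-3p/2-q/2}+1$.

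Combining the two steps, if $c_1\in\{0,\ldots,c\}$ and $d_1\in\{0,\ldots,d\}$ encode the numbers of $\partial_x$'s chosen when expanding $\partial_y^c$ and $\partial_w^d$ respectively, the corresponding summand is controlled by $\tau^{(c-c_1)+d_1}\bigl(\tau^{-3(a+c_1+d_1)/2-(b+(c-c_1)+(d-d_1))/2}+1\bigr)$, using that $B_{11},B_{22}=\mathcal{O}(1)$ and $B_{12},B_{21}=\mathcal{O}(\tau)$ as $\tau\to 0^+$. A short expansion simplifies the singular exponent to $-3a/2-b/2+c/2-2c_1-d/2$, whose minimum over $c_1$ is attained at $c_1=c$ and equals $-3a/2-b/2-3c/2-d/2$; this matches the leading singular term of $(\tau^{-3a/2-b/2}+1)\tau^{-3c/2-d/2}$, the non-singular pieces being absorbed into the $+1$ factor. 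The main obstacle is precisely this last algebraic bookkeeping: one needs the $\mathcal{O}(\tau)$ smallness of the off-diagonal entries $B_{12},B_{21}$ to exactly offset the $\mathcal{O}(\tau^{-3/2})$ singularity of each $\partial_x$-derivative in the $L^1_{y,w}$-norm, in order to produce the asymmetric factorization between the $(a,b)$- and $(c,d)$-powers in \eqref{eq:estimGderivatives}.
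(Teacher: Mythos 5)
Your route is correct and genuinely different from the paper's, though both rest on the same two structural facts about the Gaussian kernel: that $G$ depends on $(x,v,y,w)$ only through $\mu=(x,v)^\top-B(\tau)(y,w)^\top$, and that the $(y,w)$-derivatives carry a factor of $B(\tau)$. The paper proceeds by writing $\partial_x^a\partial_v^b\partial_y^c\partial_w^d G/G$ as a polynomial in the quantities $a_{ij}=-\partial_i\mu^\top\Sigma^{-1}\partial_j\mu$ and $b_k=-\partial_k\mu^\top\Sigma^{-1}\mu$, applying Cauchy--Schwarz to get the pointwise bound $|\partial_x^a\partial_v^b\partial_y^c\partial_w^d G|\leq |\Sigma^{-1/2}\partial_x\mu|^a\cdots|\Sigma^{-1/2}\partial_w\mu|^d\,p(|\Sigma^{-1/2}\mu|)\,G$, and then computing the $\tau\to0$ and $\tau\to\infty$ asymptotics of $e_i^\top\Sigma^{-1}e_i$ and $e_i^\top B^\top\Sigma^{-1}Be_i$. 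The asymmetry in \eqref{eq:estimGderivatives} then comes directly from the observation that $|\Sigma^{-1/2}\partial_y\mu|$ and $|\Sigma^{-1/2}\partial_w\mu|$ have the same $\tau\to0$ singularity as $|\Sigma^{-1/2}\partial_x\mu|$ and $|\Sigma^{-1/2}\partial_v\mu|$ (since $B(0)=\Id$) but decay to zero as $\tau\to\infty$ (because $B(\tau)=O(e^{-\nu\tau})$), whereas the latter stay bounded. Your approach instead uses the chain rule $\partial_y=-B_{11}\partial_x-B_{21}\partial_v$, $\partial_w=-B_{12}\partial_x-B_{22}\partial_v$ and a binomial expansion to reduce everything to pure $(x,v)$-derivatives, after which a change of variable $m=B(\tau)(y,w)^\top$ trivializes the $L^\infty_{x,v}(L^1_{y,w})$ norm. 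The structural insight you isolate — that $B_{12},B_{21}=O(\tau)$ as $\tau\to0^+$ exactly offsets the extra $\tau^{-3/2}$ per $\partial_x$ coming from the degenerate eigenvalue of $\Sigma(\tau)$ — is the same content as the paper's asymptotics for $e_i^\top B^\top\Sigma^{-1}B e_i$, just exposed at a different stage of the computation. Your exponent bookkeeping is correct (the singular exponent $-3a/2-b/2+c/2-2c_1-d/2$ is minimised at $c_1=c$, matching the stated singularity), and the first part (differentiating \eqref{eq:massconservGdwdy} under the integral) is a valid alternative to the paper's observation that $\partial_w G = (-e_2^\top B^\top\Sigma^{-1}\mu)G$ integrates to zero as the expectation of a centered Gaussian. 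The one step you leave compressed is the estimate $\|\partial_{\mu_1}^p\partial_{\mu_2}^q\tilde G_\tau\|_{L^1}\lesssim\tau^{-3p/2-q/2}+1$; a diagonalization argument does deliver it, but one must track the $O(\tau)$ rotation mixing the eigendirections — in effect re-deriving the $\tau\to0$ asymptotics $e_1^\top\Sigma^{-1}e_1\sim3/(\nu\theta\tau^3)$ and $e_2^\top\Sigma^{-1}e_2\sim1/(\nu\theta\tau)$ that the paper records directly. Also note that your argument, taken literally, yields the bound $\tau^{-3a/2-b/2-3c/2-d/2}+1$, which is equivalent to the claimed factorized form $(\tau^{-3a/2-b/2}+1)\tau^{-3c/2-d/2}$ only up to a $T$-dependent constant on bounded time intervals; this is in fact how the estimate is used (the paper's own statement implicitly omits the $e^{2\nu\tau}$ from the Jacobian of the change of variable as well), but the paper's version records the decay of the $(c,d)$-factor at large $\tau$ which your argument does not directly capture.
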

\begin{proof}
Let $e_1 = \binom{1}{0}$ and $e_2 = \binom{0}{1}$ be the canonical basis of $\RR^2$. Using the explicit formula \eqref{eq:Gformula} for the kernel, one computes

\[
\partial_wG = (-\partial_w\mu^\top\Sigma^{-1}\mu) G = (-e_2^\top B^\top\Sigma^{-1}\mu) G,
\]
which vanishes after integration in $x,y$ (since it is proportional to the expectation of a centered Gaussian). The same happens for $\partial_yG$ and thus it proves \eqref{eq:deriv_massconservGdwdy}.
To compute higher order derivatives let us write, for combinatorial purposes, $\partial_1\mu = \partial_x\mu$, $\partial_2\mu = \partial_v\mu$, $\partial_3\mu = \partial_y\mu$, $\partial_4\mu = \partial_w\mu$ and let us define $a_{ij} = -\partial_i\mu^\top\Sigma^{-1}\partial_j\mu$ and $b_k = -\partial_k\mu^\top\Sigma^{-1}\mu$. Then observe that the ratio $\partial_x^a\partial_v^b\partial_y^c\partial_w^d G / G$ is given by the sum of all the products between $a_{ij}$'s and $b_k$'s with $i,j,k$ ranging from $1$ to $4$ and such that the indices $1,2,3,4$ appear exactly $a,b,c,d$ times respectively in the product. In particular, there is a polynomial $p$ of degree $a+b+c+d$ such that
\[
|\partial_x^a\partial_v^b\partial_y^c\partial_w^d G|\leq 
|\Sigma^{-\frac12}\partial_x\mu|^a|\Sigma^{-\frac12}\partial_v\mu|^b|\Sigma^{-\frac12}\partial_y\mu|^c|\Sigma^{-\frac12}\partial_w\mu|^d p(|\Sigma^{-\frac12}\mu|) G.
\]
 From there asymptotic expansions reveal that
 \[
|\Sigma^{-\frac12}\partial_x\mu|^2 = e_1^\top\Sigma(\tau)^{-1}e_1 
= \left\{
 \begin{array}{ll}
 \ds\frac{3}{\nu\theta\tau^3} + o(\tau^{-3})&\text{as }\tau\to0,\\[.75em]
 \ds\frac{\alpha}{\theta} + o(1)&\text{as }\tau\to \infty.
 \end{array}\right.
 \]
 \[
 |\Sigma^{-\frac12}\partial_v\mu|^2 = e_2^\top \Sigma(\tau)^{-1}e_2
  = \left\{
 \begin{array}{ll}
 \ds\frac{1}{\nu\theta\tau} + o(\tau^{-1})&\text{as }\tau\to0,\\[.75em]
 \ds \frac{1}{\theta} + o(1)&\text{as }\tau\to \infty.
 \end{array}\right.
\]
 \[
 |\Sigma^{-\frac12}\partial_y\mu|^2 = e_1^\top B(\tau)^\top\Sigma(\tau)^{-1}B(\tau)e_1 = \left\{
 \begin{array}{ll}
 \ds\frac{3}{\nu\theta\tau^3} + o(\tau^{-3})&\text{as }\tau\to0,\\[.75em]
 \ds o(\tau^{-3})&\text{as }\tau\to \infty.
 \end{array}\right.
\]
 \[
 |\Sigma^{-\frac12}\partial_w\mu|^2 = e_2^\top B(\tau)^\top\Sigma(\tau)^{-1}B(\tau)e_2 = \left\{
 \begin{array}{ll}
 \ds\frac{1}{\nu\theta\tau} + o(\tau^{-1})&\text{as }\tau\to0,\\[.75em]
 \ds o(\tau^{-1})&\text{as }\tau\to \infty.
 \end{array}\right.
 \]
 This proves \eqref{eq:estimGderivatives} with a constant depending only on $a,b,c,d,\alpha,\nu,\theta$.
\end{proof}

\begin{rema}
It is well-known that for the VFP equation \eqref{eq:VFPfundamental} is hypoelliptic \cite{hormander_1967_hypoelliptic, herau_2004_isotropic, hellfer_2005_hypoelliptic}. In the regularization estimate \eqref{eq:estimGderivatives}, the hypoelliptic nature of the PDE can be seen through the singularity at $\tau\to0^+$. Each $\partial_v$ and $\partial_w$ derivatives bring a $O(\tau^{-1/2})$ singularity in the estimate so the regularization properties are similar to that of the heat equation, because of the diffusion in the $v$ variable. Conversely each $\partial_x$ and $\partial_y$ derivatives bring a bigger $O(\tau^{-3/2})$ singularity, reflecting the degeneracy of the kinetic Fokker-Planck operator and the interplay between transport in the $x$ variable and diffusion in the $v$ variable.
\end{rema}

\subsection{Mild solution of the VFP equation}

Let us first consider the linear VFP equation
\begin{equation}\label{eq:VFPlin}
\left\{
\begin{array}{l}
\partial_t f + v\partial_xf +(F(t,x)-\alpha x)\partial_v f = 2\nu\partial_v(vf +\theta \partial_v f),\\[.75em]
f(0,x,v) = f^\text{in}(x,v).
\end{array}
\right.
\end{equation}
with a given force field $F$. We build a mild solution for this equation using the explicit representation of the Vlasov-Fokker-Planck semigroup through its fundamental solution. The method is  strongly inspired by the papers of Victory, O'Dwyer  \cite{victory_1990_classical} and Bouchut \cite{bouchut_1993_existence}.
\begin{defi}
We say that $f\in\mathcal{C}([0,\infty), L^1(\RR^2))$ is a global mild solution of \eqref{eq:VFPlin} with force field $F\in L^\infty([0,\infty)\times\RR)$ and initial data $f^\text{in}\in L^1(\RR^2)$ if for all $t\geq0$ and a.e. $(x,v)\in\RR^2$ one has 
\begin{multline*}
f(t,x,v) = \iint_{\RR^2}G(t,x,y,v,w)f^\text{in}(y,w)\dd y\dd w \\
+ \int_0^t\iint_{\RR^2}\partial_wG(t-s,x,y,v,w)F(s,y)f(s,y,w)\dd y\dd w\dd s.
\end{multline*}
\end{defi}

\begin{prop}\label{prop:mildsol_linVFP}
Given $F\in L^\infty([0,\infty)\times\RR)$ and $f^\text{in}\in L^1(\RR^2)$, there is a unique mild solution $f\in\mathcal{C}([0,\infty), L^1(\RR^2))$ of the linear Vlasov-Fokker-Planck equation \eqref{eq:VFPlin}. This solution conserves mass
\[
\iint_{\RR^2}f(t,x,v)\dd x\dd v = \iint_{\RR^2}f^\text{in}(x,v)\dd x\dd v,\quad \forall t\geq0.
\]
and satisfies  the additional estimate
\begin{equation}\label{eq:estimdvf_VFPlin}
\|\partial_v f(t)\|_{L^1_{x,v}}\lesssim (1+t^{-\frac12}).
\end{equation}
\end{prop}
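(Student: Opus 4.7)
The plan is to build the mild solution via a Picard fixed-point argument on $\mathcal{C}([0,T],L^1(\RR^2))$ and then to extract the mass conservation and the quantitative $\partial_v$ estimate directly from the integral representation. Denote the Duhamel map by
\[
\Phi f(t,x,v) = \iint G(t,x,y,v,w) f^\text{in}(y,w)\,\dd y\,\dd w + \int_0^t \iint \partial_w G(t-s,x,y,v,w) F(s,y) f(s,y,w)\,\dd y\,\dd w\,\dd s\,.
\]
Inspecting the proof of Proposition~\ref{prop:kerder}, the pointwise bound $|\partial_w G|\leq |\Sigma^{-1/2}\partial_w\mu|\cdot|\Sigma^{-1/2}\mu|\,G$ has the key feature that $|\Sigma^{-1/2}\partial_w\mu|$ depends only on $\tau$, so integrating the Gaussian factor in $(x,v)$ rather than in $(y,w)$ yields the symmetric variant $\|\partial_w G(\tau)\|_{L^\infty_{y,w}(L^1_{x,v})}\lesssim 1+\tau^{-1/2}$. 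Combined with the generalised Young-type inequality $\|\iint K(\cdot,y,\cdot,w)h(y,w)\,\dd y\,\dd w\|_{L^1_{x,v}}\leq \|K\|_{L^\infty_{y,w}(L^1_{x,v})}\|h\|_{L^1_{y,w}}$, this shows both that $\Phi$ maps $\mathcal{C}([0,T],L^1)$ into itself and that
\[
\|\Phi f_1(t) - \Phi f_2(t)\|_{L^1} \leq C\|F\|_\infty \int_0^t \left(1+(t-s)^{-1/2}\right)\|f_1(s)-f_2(s)\|_{L^1}\,\dd s\,,
\]
so $\Phi$ is a strict contraction on $[0,T]$ for $T$ small, uniformly in the initial data. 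Iterating over time intervals of uniform length yields a unique global mild solution. Mass conservation follows by integrating the mild formula in $(x,v)$ and using $\iint G(\tau)\,\dd x\,\dd v = 1$ from Proposition~\ref{prop:ker} together with $\iint \partial_w G(\tau)\,\dd x\,\dd v = 0$ from Proposition~\ref{prop:kerder}.

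For the $\partial_v$ estimate I would work from the pre-integration-by-parts form of Duhamel's formula, namely $f(t) = S(t)f^\text{in} + \int_0^t S(t-s)[-F(s)\partial_v f(s)]\,\dd s$, and differentiate in $v$ under the kernel:
\[
\partial_v f(t) = \iint \partial_v G(t)\, f^\text{in}\,\dd y\,\dd w - \int_0^t\iint \partial_v G(t-s)\,F(s,y)\,\partial_w f(s,y,w)\,\dd y\,\dd w\,\dd s\,.
\]
The computation in the proof of Proposition~\ref{prop:kerder}, specialised to $a=c=d=0$, $b=1$, yields $\|\partial_v G(\tau)\|_{L^\infty_{y,w}(L^1_{x,v})}\lesssim 1+\tau^{-1/2}$. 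Since $\|\partial_w f(s)\|_{L^1_{y,w}} = \|\partial_v f(s)\|_{L^1_{x,v}}$, the same Young-type estimate gives, with $g(t) := \|\partial_v f(t)\|_{L^1}$, the weakly singular Volterra inequality
\[
g(t) \leq C(1+t^{-1/2})\|f^\text{in}\|_{L^1} + C\|F\|_\infty \int_0^t \left(1+(t-s)^{-1/2}\right)g(s)\,\dd s\,,
\]
from which a generalised Grönwall inequality of Henry type provides $g(t)\lesssim 1+t^{-1/2}$ on any bounded time interval, which is \eqref{eq:estimdvf_VFPlin}.

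The main technical obstacle is to rigorously justify the differentiation in $v$ when $f^\text{in}$ has no \emph{a priori} regularity. The naive alternative, namely differentiating in $v$ the post-integration-by-parts form of the mild equation, would require controlling $\partial_v\partial_w G$, whose $L^\infty_{y,w}(L^1_{x,v})$ norm scales like $\tau^{-1}$ at $\tau=0$ from the same pointwise computation and hence is not locally integrable in time. I would sidestep this by mollifying the initial data: for approximations $f^\text{in}_n\in\mathcal{C}_c^\infty$ the corresponding mild solutions $f_n$ are classical and smooth by the regularising property of the fundamental solution, so $\partial_v f_n\in L^1$ is unambiguous and the Volterra argument above yields a bound on $\|\partial_v f_n(t)\|_{L^1}$ that is uniform in $n$. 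The $L^1$-stability of the solution map -- itself a direct by-product of the contraction estimate in the first paragraph -- then allows passing to the limit $n\to\infty$ and concluding \eqref{eq:estimdvf_VFPlin} for arbitrary $f^\text{in}\in L^1$.
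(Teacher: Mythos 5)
Your argument is correct and follows the paper's strategy: a Duhamel/Picard iteration built on kernel estimates for $G$ and $\partial_w G$, plus the fixed-point formula for $\partial_v f$ obtained by integrating by parts in $w$ so that only $\partial_v G$, with its integrable $\tau^{-1/2}$ singularity, appears. The paper sums the Neumann series globally with the $\Gamma(n/2+1)$ growth rather than concatenating small-time contractions, and treats $\partial_v f$ as an independent fixed-point unknown rather than mollifying $f^\text{in}$, but these are equivalent technical routes; your explicit note that one needs the $L^\infty_{y,w}(L^1_{x,v})$ variant of the kernel estimate (rather than the $L^\infty_{x,v}(L^1_{y,w})$ one stated in Proposition~\ref{prop:kerder}) and your observation that $\partial_v\partial_w G$ has a non-integrable $\tau^{-1}$ singularity, which forces the $w$-integration by parts before differentiating, are both correct and worth spelling out.
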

\begin{proof}
Let $T>0$ and let us define the operators $\mathcal{G}$ and $\mathcal{H}_F$ acting on \[\mathcal{Y}_T = \mathcal{C}([0,T], L^1(\RR^2))\] by
\[
\mathcal{G}f(t,x,v) = \iint_{\RR^2}G(t,x,y,v,w)f(t,y,w)\dd y\dd w,
\]
\[
\mathcal{H}_Ff(t,x,v) = \int_0^t\iint_{\RR^2}\partial_wG(t-s,x,y,v,w)F(s,y)f(s,y,w)\dd y\dd w\dd s.
\]
Then using Proposition~\ref{prop:ker} with $p=1$ and \eqref{eq:estimGderivatives} one obtains
\[
\|\mathcal{G}f\|_{\mathcal{Y}_T}\leq\|f\|_{\mathcal{Y}_T},\quad
\|\mathcal{H}f\|_{\mathcal{Y}_T}\lesssim T^{\frac12}\|F\|_{L^\infty_{t,x}}\|f\|_{\mathcal{Y}_T}
\]
so that $\mathcal{G}$ and $\mathcal{H}_F$ are well defined and bounded. Moreover, using \eqref{eq:estimGderivatives} repeatedly, one has for some constant $C>0$ depending only on $\alpha, \theta$ and $\nu$ that, for all $s_0\geq0$,
\[
\|\mathcal{H}_F^nf\|_{L^1_{x,v}}(s_0)\leq (C\|F\|_{L^\infty_{t,x}})^n\alpha_n(s_0)\|g\|_{\mathcal{Y}_T}
\]
where
\[
\alpha_n(s_0) = \int_0^{s_0}\cdots\int_0^{s_{n-1}}\prod_{i=1}^n\frac{\dd s_i}{\sqrt{s_{i-1}-s_i}} = s_0^{\frac n2}\prod_{i=1}^n\int_0^1\frac{u^{\frac{i-1}{2}}}{\sqrt{1-u}}\dd u = \frac{(\pi s_0)^\frac n2}{\Gamma\left(\frac n2 + 1\right)}.
\]
Since the right-hand side of the last inequality defines a normally convergent series for any $s_0>0$ and since $\mathcal{Y}_T$ is a Banach space one can define the sum
\[
f := \sum_{n = 0}^\infty\mathcal{H}_F^n\mathcal{G}f^\text{in}\,,
\]
which by definition solves $f = \mathcal{G}f^\text{in} + \mathcal{H}_F f$ and is therefore a mild solution of \eqref{eq:VFPlin}. Uniqueness is readily obtained by linearity of the equation and the estimate on $\mathcal{H}_F$ and conservation of mass follows from \eqref{eq:massconservGdwdy} and \eqref{eq:deriv_massconservGdwdy}. For the estimate on the velocity derivative observe that it solves
\begin{multline*}
\partial_vf(t,x,v) = \iint_{\RR^2}\partial_vG(t,x,y,v,w)f^\text{in}(y,w)\dd y\dd w \\
- \int_0^t\iint_{\RR^2}\partial_vG(t-s,x,y,v,w)F(s,y)\partial_wf(s,y,w)\dd y\dd w\dd s.
\end{multline*}
Therefore as for the construction of $f$, by iterating the right-hand side one finds the estimate, which essentially relies upon \eqref{eq:estimGderivatives} and the integrability of the time singularity.
\end{proof}

Next we investigate uniform almost everywhere bounds on the solution. The non-negativity of density is easily proved formally using the PDE, but is not obvious on the mild formulation. In order to get back to the differential formulation we use a mollification argument inspired by Lions and Masmoudi \cite{lions_2001_uniqueness}.

\begin{lem}[$L^\infty$ bounds and non-negativity]
Let $f$ be the mild solution of \eqref{eq:VFPlin}. On the one hand, if $f^\text{in}\geq0$ then $f(t)\geq0$ for all $t\geq0$. As a consequence $\|f(t)\|_{L^1_{x,v}} = \|f^\text{in}\|_{L^1_{x,v}}$ for all $t\geq0$. On the other hand if $f^\text{in}\in L^\infty(\RR^2)$ then $f\in L^\infty_{\text{loc}}([0,\infty), L^\infty_{x,v})$.
\end{lem}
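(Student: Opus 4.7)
The two statements are essentially decoupled and I would prove them in turn, starting with the easier $L^\infty$ bound.

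For the $L^\infty_{\text{loc}}$ estimate I would work directly from the mild formulation and estimate pointwise in $(x,v)$. Bounding $\iint G(t,x,y,v,w)f^\text{in}(y,w)\dd y\dd w \leq \|f^\text{in}\|_{L^\infty} \iint G\,\dd y\dd w = e^{2\nu t}\|f^\text{in}\|_{L^\infty}$ thanks to \eqref{eq:LinfgrowthGdwdy}, and using \eqref{eq:estimGderivatives} with $d=1$ to bound the Duhamel term pointwise in $(x,v)$, I obtain the singular integral inequality
\[
\|f(t)\|_{L^\infty} \leq e^{2\nu t}\|f^\text{in}\|_{L^\infty} + C\|F\|_{L^\infty}\int_0^t\left((t-s)^{-\frac12}+1\right)\|f(s)\|_{L^\infty}\dd s.
\]
Since the singular kernel $(t-s)^{-1/2}$ is locally integrable, a standard generalized Gronwall lemma yields the desired $L^\infty_{\text{loc}}([0,\infty);L^\infty_{x,v})$ control on $f$.

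For nonnegativity, the mild formulation is not directly useful because $\partial_w G$ has no sign, so I would resort to a renormalization / mollification argument in the spirit of DiPerna--Lions and Lions--Masmoudi. A direct check against test functions (using $\partial_\tau G + v\partial_x G - \alpha x\partial_v G = 2\nu\partial_v(vG+\theta\partial_v G)$) shows that the mild $f$ is a distributional solution of \eqref{eq:VFPlin}. Let $\rho_\eps$ be a mollifier in $(x,v)$ and set $f_\eps = f \ast_{x,v}\rho_\eps$, so that
\[
\partial_t f_\eps + v\partial_x f_\eps + (F-\alpha x)\partial_v f_\eps = 2\nu\partial_v(vf_\eps + \theta\partial_v f_\eps) + R_\eps,
\]
where $R_\eps$ is a commutator remainder which converges to $0$ in $L^1_{\text{loc}}$ by the standard commutator lemma (the drift coefficients $v$, $-\alpha x$, $-2\nu v$ are affine and thus in $W^{1,\infty}_{\text{loc}}$, $F\in L^\infty$, and $f\in\mathcal{C}([0,T];L^1)$). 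I then test against $\beta'(f_\eps)$ with a smooth bounded convex approximation of $\beta(s)=\tfrac12 s_-^2$ (having $\beta,\beta'$ bounded and $\beta\geq 0$, $\beta=0$ iff $s\geq 0$). The two transport terms vanish after integration by parts in $x$ and $v$; the diffusion term contributes $-\theta\int(\partial_v f_\eps)^2\beta''(f_\eps)\dd x\dd v\leq 0$; and the damping term yields $2\nu\int[f_\eps\beta'(f_\eps)-\beta(f_\eps)]\dd x\dd v$, which for the chosen $\beta$ reduces to $2\nu\int\beta(f_\eps)\dd x\dd v$. Gronwall then gives
\[
\int\beta(f_\eps(t))\dd x\dd v \leq e^{2\nu t}\int\beta(f_\eps^\text{in})\dd x\dd v + \int_0^t e^{2\nu(t-s)}\int R_\eps\beta'(f_\eps)\dd x\dd v\,\dd s.
\]
Passing to the limit $\eps\to 0$ using $f^\text{in}\geq 0$ (so the first term vanishes) and $R_\eps\to 0$ in $L^1_{\text{loc}}$ with $\beta'$ bounded, I conclude $\int\beta(f(t))\dd x\dd v=0$, hence $f(t)\geq 0$ a.e. The equality of $L^1$ norms then follows from mass conservation stated in Proposition~\ref{prop:mildsol_linVFP}.

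The main obstacle I anticipate is the commutator estimate, which needs the unbounded affine drift $(v, F(t,x)-\alpha x, -2\nu v)$ to be handled carefully: since the coefficients grow linearly while $f$ is only globally $L^1$, localizing in $(x,v)$ before mollifying and controlling the localization error is the delicate point. The $L^\infty$ step is by contrast quite mechanical once the kernel estimates from Propositions~\ref{prop:ker}-\ref{prop:kerder} are in place.
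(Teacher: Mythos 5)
Your overall strategy (mild formula plus kernel estimates for the $L^\infty$ bound, mollification plus a ``test against $\beta'(f_\eps)$'' energy argument for non-negativity) matches the paper's plan, and your $L^\infty$ step is in the same spirit as the paper's (you keep $\|f(s)\|_{L^\infty}$ and apply a singular Gronwall, whereas the paper integrates the $\partial_w$ by parts and uses $\|\partial_v f(s)\|_{L^1}\lesssim 1+s^{-1/2}$ from \eqref{eq:estimdvf_VFPlin}; both are fine).

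For non-negativity, however, the paper's regularization is genuinely different and cleverer than yours, and it sidesteps exactly the difficulty you flag. You mollify $f$ itself, obtain an equation for $f_\eps$ with a commutator remainder $R_\eps$, and then must prove $R_\eps\to 0$; as you note, this is delicate because the drift coefficients $v$ and $-\alpha x$ are unbounded and $f$ is only globally $L^1$, so one must localize, send the cutoff to infinity, and check the interaction of cutoff errors with the mollification and with the $L^2$-type quantity $\int\beta(f_\eps)$. Your write-up treats this as a ``standard commutator lemma'' but that lemma is local; the global passage with linearly growing coefficients is precisely where the work is. The paper avoids this entirely: it sets $g_\eps := \mathcal{G}f^\text{in}_\eps + \mathcal{H}_F f_\eps$ with $f_\eps$ a mollification of the \emph{known} mild solution $f$. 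Because $g_\eps$ is built from the fundamental solution, it solves the linear VFP equation \emph{exactly}, with source $-F\partial_v f_\eps$ (no commutator). Testing against $-2g_\eps^-$ then gives a clean Gronwall inequality whose error term is controlled by $\|g_\eps - f_\eps\|_{L^\infty(0,T;L^\infty)}$ (which vanishes by continuity of $\mathcal{G},\mathcal{H}_F$, already established) and $\int_0^t\|\partial_v f_\eps(s)\|_{L^1}\dd s$, using the identity $2(\partial_v f_\eps)g_\eps^- = 2(\partial_v f_\eps)(g_\eps^- - f_\eps^-) - 2\partial_v((f_\eps^-)^2)$. No localization of unbounded coefficients is needed. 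Two further small points: the paper also works WLOG with $f^\text{in}\in L^1\cap L^\infty$ and recovers the general $L^1$ case by density, which you would also need to make the $L^2$-type quantity $\int\beta(f_\eps)$ finite; and you dropped a factor $2\nu$ in the diffusion term ($-2\nu\theta\int(\partial_v f_\eps)^2\beta''(f_\eps)$). Your route can in principle be made rigorous, but the paper's construction of $g_\eps$ from the operators $\mathcal{G},\mathcal{H}_F$ is the key idea you are missing, and it removes the hardest step of your plan.
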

\begin{proof}
Let us start with the second property. If $f^\text{in}\in L^\infty$ then by \eqref{eq:LinfgrowthGdwdy} and \eqref{eq:estimdvf_VFPlin}
\[
\|f(t)\|_{L^\infty}\lesssim e^{2\nu t}\|f^\text{in}\|_{L^\infty} + \|F\|_{L^\infty}\int_{0}^t(1+s^{-\frac12})e^{2\nu s}\dd s.
\]
which proves the claim. For the non-negativity, we assume without loss of generality that $f^\text{in}\in L^1\cap L^\infty$. One recovers the $L^1$ case \emph{a posteriori} by a density argument. Assume that $f^\text{in}\geq0$  and let $f_\varepsilon$ be a mollification of $f$ such that $\|f_\varepsilon - f\|_{L^\infty(0,T,L^\infty_{x,v})}\to0$ when $\varepsilon\to0$, $\|\partial_vf_\varepsilon(t)\|_{L^1}\leq\|\partial_vf(t)\|_{L^1}$ and $f^\text{in}_\varepsilon\geq0$. Let us also introduce $g_\varepsilon = \mathcal{G}f^\text{in}_\varepsilon + \mathcal{H}_Ff_\varepsilon$. Observe that by the boundedness properties of the operators $\mathcal{G}$ and $\mathcal{H}_F$ one has $\|g_\varepsilon-f_\varepsilon\|_{L^\infty(0,T,L^\infty_{x,v})}\to 0$. Moreover $g_\varepsilon$ satisfies 
\[
\partial_tg_\varepsilon + v\partial_xg_\varepsilon  -\alpha x\partial_v g_\varepsilon  = 2\nu\partial_v(vg_\varepsilon  +\theta \partial_v g_\varepsilon) - F\partial_vf_\varepsilon.
\]
Multiplying the equation by $-2g_\varepsilon^- = -2\max(-g_\varepsilon,0)$ and integrating in all variables it is rather straightforward to obtain the estimate
\begin{multline*}
\|g_\varepsilon^{-}(t)\|_{L^2_{x,v}}^2\leq\|g_\varepsilon^{-}(0)\|_{L^2_{x,v}}^2 +2\nu\int_0^t\|g_\varepsilon^{-}(s)\|_{L^2_{x,v}}^2\dd s \\+ 2\|F\|_{L^\infty_{t,x}}\|g_\varepsilon-f_\varepsilon\|_{L^\infty_{t,x,v}}\int_0^t\|\partial_vf_\varepsilon(s)\|_{L^1}\dd s, 
\end{multline*}
where we used that $2(\partial_vf_\varepsilon) g_\varepsilon^- = 2(\partial_vf_\varepsilon) (g_\varepsilon^- - f_\varepsilon^-) - 2\partial_v((f_\varepsilon^-)^2)$.  The first term of the right-hand side is null since $g_\varepsilon(0) = f^\text{in}_\varepsilon\geq0$ and uniformly on finite time intervals the last term tends to $0$ as $\varepsilon$ tends to $0$. Therefore using a Grönwall argument and taking the limit $\varepsilon\to0$ eventually proves that $f\geq0$. 
\end{proof}

From the resolution of the VFP equation with a given force field we now build a solution to the nonlinear equation by an iteration argument.

\begin{prop}\label{prop:mildsol_nonlinVFP}
Assume that the interaction potential $K\equiv K(x)$ is such that $\partial_xK\in L^\infty(\RR)$ and let $f^\text{in}$ be a non-negative integrable function. Then there is a unique mild solution $f\in\mathcal{C}([0,\infty), L^1(\RR^2))$ of the nonlinear Vlasov-Fokker-Planck equation
 \eqref{eq:VFPnonlin}. It conserves mass and non-negativity.  Moreover the mapping $f^\text{in}\in L^1_+(\RR^2)\mapsto f\in\mathcal{C}([0,T], L^1_+(\RR^2))$ is Lipschitz continuous, where $L^1_+(\RR^2)$ denotes the cone of non-negative functions in $L^1(\RR^2)$.
\end{prop}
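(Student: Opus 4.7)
The natural approach is a Picard iteration based on Proposition~\ref{prop:mildsol_linVFP}. Set $\mathcal{M}=\|f^\text{in}\|_{L^1}$, fix $T>0$ to be chosen later and, using the notation of the proof of Proposition~\ref{prop:mildsol_linVFP}, introduce the closed subset
\[
\mathcal{Y}_T^+=\{g\in\mathcal{Y}_T:\ g\geq0,\ \|g(t)\|_{L^1}=\mathcal{M}\ \text{for all}\ t\in[0,T]\}.
\]
To any $g\in\mathcal{Y}_T^+$ one associates the force field $F_g(t,x)=-I\partial_xK\ast\rho_g(t,x)$ with $\rho_g=\int_\RR g\,\dd v$; Young's inequality gives the uniform bound $\|F_g\|_{L^\infty_{t,x}}\leq I\|\partial_xK\|_{L^\infty}\mathcal{M}$, so Proposition~\ref{prop:mildsol_linVFP} together with the mass conservation and non-negativity stated there defines a map $\Phi:\mathcal{Y}_T^+\to\mathcal{Y}_T^+$ sending $g$ to the unique mild solution of \eqref{eq:VFPlin} with force $F_g$ and initial datum $f^\text{in}$.

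The core step is to prove that $\Phi$ is a contraction on $\mathcal{Y}_T^+$ for $T$ small. Writing $f_i=\Phi(g_i)$ as $f_i=\mathcal{G}f^\text{in}+\mathcal{H}_{F_{g_i}}f_i$ and subtracting yields
\[
f_1-f_2 \;=\; \mathcal{H}_{F_{g_1}-F_{g_2}}f_2 \;+\; \mathcal{H}_{F_{g_1}}(f_1-f_2).
\]
The operator bound $\|\mathcal{H}_F h\|_{\mathcal{Y}_T}\lesssim T^{1/2}\|F\|_{L^\infty_{t,x}}\|h\|_{\mathcal{Y}_T}$ from the proof of Proposition~\ref{prop:mildsol_linVFP} combined with $\|F_{g_1}-F_{g_2}\|_{L^\infty_{t,x}}\leq I\|\partial_xK\|_{L^\infty}\|g_1-g_2\|_{\mathcal{Y}_T}$ gives, after absorbing the second term on the left-hand side, an estimate of the form $\|f_1-f_2\|_{\mathcal{Y}_T}\leq \tfrac12\|g_1-g_2\|_{\mathcal{Y}_T}$ provided $T\leq T_0$, where $T_0$ depends only on $I\mathcal{M}\|\partial_xK\|_{L^\infty}$ and $\alpha,\nu,\theta$. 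Banach's fixed-point theorem then produces a unique mild solution of \eqref{eq:VFPnonlin} on $[0,T_0]$.

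To go global, the decisive point is that $T_0$ depends on the initial datum only through $\mathcal{M}$, which is conserved in time. One therefore restarts the construction at $t=T_0$ with the same step $T_0$ and iterates indefinitely, producing $f\in\mathcal{C}([0,\infty),L^1_+(\RR^2))$. Uniqueness on $[0,\infty)$ follows from the same contraction estimate applied successively on the intervals $[kT_0,(k+1)T_0]$. For the Lipschitz continuity of $f^\text{in}\mapsto f$ on $[0,T]$, one runs the same subtraction for two solutions with distinct initial data, obtaining on $[0,T_0]$ a bound of the shape $\|f_1-f_2\|_{\mathcal{Y}_{T_0}}\lesssim \|f_1^\text{in}-f_2^\text{in}\|_{L^1}$ after invoking $\|\mathcal{G}(f_1^\text{in}-f_2^\text{in})\|_{\mathcal{Y}_{T_0}}\leq\|f_1^\text{in}-f_2^\text{in}\|_{L^1}$, and iterating over the partition yields a Lipschitz constant growing at most geometrically in $T$. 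The only subtlety worth highlighting is keeping the iteration step $T_0$ uniform as time advances; this is precisely what mass conservation delivers, and no additional a priori control on $f$ (moments, $L^\infty$ bounds, etc.) is required.
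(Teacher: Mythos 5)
Your Picard iteration is essentially the same as the paper's: both freeze the force field from the previous iterate and re-solve the linear equation via Proposition~\ref{prop:mildsol_linVFP}, and both rely on the observation that $\|F_g\|_{L^\infty}\leq I\|\partial_xK\|_{L^\infty}\mathcal{M}$ keeps the iteration under control through the conserved mass. The technical difference: you set it up as a Banach fixed point, which forces a small step $T_0$ from the bound $\|\mathcal{H}_F h\|_{\mathcal{Y}_T}\lesssim T^{1/2}\|F\|_{L^\infty_{t,x}}\|h\|_{\mathcal{Y}_T}$, and then restart with a uniform step; the paper instead shows that the iterates $(f^n)$ are Cauchy on any $[0,T]$ directly by iterating a singular Gr\"onwall inequality to get the factorial decay
\[
\|f^{n+1}(t)-f^n(t)\|_{L^1}\leq \frac{2\,C_T^n\,(\pi T)^{n/2}\,\mathcal{M}}{\Gamma\left(\tfrac n2+1\right)},
\]
which is summable regardless of $T$. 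Both arguments are correct and use only what you identified; the paper's version avoids the restarting step and gives the $T$-dependence of the Lipschitz constant in one shot, while yours makes more explicit why mass conservation pins down a uniform time step. One small gap to close in your sketch: the contraction on $\mathcal{Y}_{T_0}^+$ gives uniqueness only within the class of nonnegative, mass-conserving mild solutions, whereas the statement asserts uniqueness in all of $\mathcal{C}([0,\infty),L^1(\RR^2))$. To get the full claim, run the subtraction estimate for two arbitrary mild solutions $f_1,f_2$ and control $\|F_{f_i}\|_{L^\infty}$ via $\sup_{[0,T]}\|f_i(t)\|_{L^1}<\infty$, which follows from continuity on the compact interval, rather than from conservation of mass --- which for an a priori unknown mild solution is itself a by-product of the uniqueness you are trying to prove.
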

\begin{proof}
We use an iteration scheme. Let $f^0 = f^\text{in}$ and define $f^{n+1}$ as the mild solution of \eqref{eq:VFPlin} with the force field $F^n = -I\partial_x K\ast\rho^n$. Let us show that $(f^n)_n$ is a Cauchy sequence in $\mathcal{Y}_T = \mathcal{C}([0,T], L^1(\RR^2))$. Since 
\[
f^{n+1} - f^n = \mathcal{H}_{F^n}(f^{n+1} - f^n) +  (\mathcal{H}_{F^{n}}-\mathcal{H}_{F^{n-1}})f^{n}
\]
One has 
\begin{multline*}
\|f^{n+1}(t) - f^n(t)\|_{L^1_{x,v}}\leq\int_0^t\frac{C}{\sqrt{t-s}}\Big(\|f^{n+1}(s) - f^n(s)\|_{L^1_{x,v}}\|F^n(s)\|_{L^\infty_{x}}\\ +\|F^{n}(s) - F^{n-1}(s)\|_{L^\infty_{x}}\|f^{n}(s)\|_{L^1_{x,v}} \Big)\dd s
\end{multline*}
which by Grönwall yields 
\begin{multline*}
\|f^{n+1}(t) - f^n(t)\|_{L^1_{x,v}}\\\leq \exp\left(\int_0^t\frac{C}{\sqrt{t-s}}\|F^n(s)\|_{L^\infty_{x}}\dd s\right)\int_0^t\frac{C}{\sqrt{t-s}} \|F^{n}(s) - F^{n-1}(s)\|_{L^\infty_{x}}\|f^{n}(s)\|_{L^1_{x,v}}\dd s .
\end{multline*}
Using non-negativity and conservation of mass one has $\|f^{n}(s)\|_{L^1_{x,v}}=\|f^{\text{in}}\|_{L^1_{x,v}}$. Combining this with Young's convolution inequality one has $\|F^n(s)\|_{L^\infty_{x}}\leq I \|\partial_x K\|_{L^\infty}\|f^{\text{in}}\|_{L^1_{x,v}}$. Therefore
\[
\|f^{n+1}(t) - f^n(t)\|_{L^1_{x,v}}\leq C_T\int_0^t\frac{1}{\sqrt{t-s}} \|f^{n}(s) - f^{n-1}(s)\|_{L^1_{x,v}}\dd s
\]
with $C_T= CI\|\partial_x K\|_{L^\infty}\|f^{\text{in}}\|_{L^1_{x,v}}\exp({C\sqrt{T}I \|\partial_x K\|_{L^\infty}\|f^{\text{in}}\|_{L^1_{x,v}} })$. Iterating this bound one obtains,
\[
\|f^{n+1}(t) - f^n(t)\|_{L^1_{x,v}}\leq \frac{2 C_T^n (\pi T)^{\frac n2}\|f^{\text{in}}\|_{L^1_{x,v}}}{\Gamma(\frac n2 + 1)}\,.
\]
Since the right-hand side is summable $(f^n)_n$ is a Cauchy sequence in the Banach space $\mathcal{Y}_T$ and its limit $f$ is a non-negative mild solution of \eqref{eq:VFPnonlin}. Lipschitz continuity (and uniqueness) w.r.t. the initial data can be obtained with a Grönwall type argument on the same type of estimates.
\end{proof}

\subsection{Hypoelliptic regularity estimates}

In this section we investigate the regularization properties of \eqref{eq:VFPnonlin}.

\begin{prop}\label{prop:regu} Under the assumptions of Proposition~\ref{prop:mildsol_nonlinVFP}, for all $T>0$ and $n,m\in\mathbb{N}$ there is $C_{T,n,m}>0$ such that for $t\in(0,T]$
\[
\|\partial_v^n\partial_x^mf(t)\|_{L^1_{x,v}}\leq C_{T,n,m}\left(t^{-\frac n2-\frac{3m}{2}}+1\right)\|f^\text{in}\|_{L^1_{x,v}}.
\]
\end{prop}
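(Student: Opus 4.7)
The plan is to proceed by induction on the total order of derivatives $N=n+m$, working throughout with the mild formulation. The base case $N=0$ is mass conservation from Proposition~\ref{prop:mildsol_nonlinVFP}, and $(m,n)=(0,1)$ follows from estimate~\eqref{eq:estimdvf_VFPlin} of Proposition~\ref{prop:mildsol_linVFP} applied with $F=F_f$, which is bounded by $I\|\partial_xK\|_{L^\infty}\|f^\text{in}\|_{L^1}$ thanks to Young's convolution inequality and mass conservation.

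For the inductive step $m+n=N+1\geq 2$, I would fix $t>0$ and work with the mild formula restarted at $t_0=t/2$,
\[
f(t)=\iint G(t/2, x,y,v,w)f(t/2,y,w)\,\dd y\dd w+\int_{t/2}^t\iint\partial_wG(t-s)F(s,y)f(s,y,w)\,\dd y\dd w\dd s,
\]
and apply $\partial_x^m\partial_v^n$. For the free part, all derivatives fall on $G(t/2)$ and estimate~\eqref{eq:estimGderivatives}, which extends to the $L^\infty_{y,w}(L^1_{x,v})$ norm needed here by the same asymptotic analysis as in Proposition~\ref{prop:kerder}, yields the desired bound $\lesssim t^{-3m/2-n/2}\|f^\text{in}\|_{L^1}$ after applying mass conservation to $f(t/2)$.

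The Duhamel term is more subtle, since $\partial_x^m\partial_v^n\partial_wG(t-s)$ carries a non-integrable time-singularity of order $(t-s)^{-3m/2-n/2-1/2}$ at $s=t$. I would handle it via integration by parts in $(y,w)$, using the linear identities
\[
\partial_xG=A_1(\tau)\partial_yG+A_2(\tau)\partial_wG,\qquad\partial_vG=B_1(\tau)\partial_yG+B_2(\tau)\partial_wG,
\]
obtained by inverting the system $\partial_yG=-B_{11}\partial_xG-B_{21}\partial_vG$, $\partial_wG=-B_{12}\partial_xG-B_{22}\partial_vG$ from Proposition~\ref{prop:fundamental}. All coefficients are bounded on $[0,T]$, with crucially $A_2(\tau),B_1(\tau)=O(\tau)$ as $\tau\to 0$. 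A direct computation shows that in the expansion $\partial_x^m\partial_v^nG=\sum_{a+b=m+n}c_{a,b}(\tau)\partial_y^a\partial_w^bG$, the coefficient $c_{a,b}(\tau)$ vanishes like $\tau^{|a-m|}$ at $\tau=0$. After IBP transferring all $a+b$ derivatives onto $F(s,y)f(s,y,w)$, only $\partial_wG$ remains on the kernel, with the integrable singularity $(t-s)^{-1/2}$. The Leibniz expansion of $\partial_y^a\partial_w^b(Ff)$, together with the bound $\|\partial_y^\ell F(s)\|_{L^\infty}\leq I\|\partial_xK\|_{L^\infty}\|\partial_y^\ell f(s)\|_{L^1}$, then reduces the Duhamel contribution to a finite sum of integrals of products of $\|\partial_x^{m'}\partial_v^{n'}f(s)\|_{L^1}$ with $m'+n'\leq N+1$; the terms with $m'+n'\leq N$ are controlled on $[t/2,t]$ by the inductive hypothesis, and a careful accounting of the $\tau^{|a-m|}$ weights against the $\tau^{-1/2}$ kernel shows that these contributions are dominated by the target bound.

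The main obstacle lies in the self-referential terms where all $N+1$ derivatives fall on a single $f$-factor, producing $\|\partial_x^{m''}\partial_v^{n''}f(s)\|_{L^1}$ with $m''+n''=N+1$, which is the very quantity being estimated. For such terms the integration of $\tau^{-1/2}$ over $[0,t/2]$ yields a smallness factor of order $t^{1/2}$. Setting $M(t):=\sup_{s\in(0,t]}s^{3m/2+n/2}\|\partial_x^m\partial_v^nf(s)\|_{L^1}$, this gives rise to a self-consistent inequality of the form $M(t)\leq C_0+C_1t^{1/2}M(t)$, whence $M(t)\leq C_0/(1-C_1 t^{1/2})$ is bounded for $t$ sufficiently small, closing the induction. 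The bound on any time interval bounded away from zero is then routine by continuity of $f$ and interpolation with the mild formulation.
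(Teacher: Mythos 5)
Your proof proceeds by the same core mechanism as the paper's: the kernel identity $\nabla_{x,v}G(\tau) = -(B(\tau)^{-1})^\top\nabla_{y,w}G(\tau)$, the $O(\tau)$ scaling of the off-diagonal entries of $B(\tau)^{-1}$, integration by parts in $(y,w)$ to shed the extra singularity, and a splitting of the time interval to balance the singularities at $s=0$ and $s=t$ (your restart of the mild formula at $t/2$ is a clean way to do exactly what the paper achieves with its $\int_0^{t/2}+\int_{t/2}^t$ splitting, and it likewise motivates why only $\|f(t/2)\|_{L^1}$ is needed for the free part). Your observation that the coefficient $c_{a,b}(\tau)$ in $\partial_x^m\partial_v^n G=\sum_{a+b=m+n}c_{a,b}(\tau)\partial_y^a\partial_w^b G$ vanishes like $\tau^{|a-m|}$ is correct and is indeed the heart of the power counting.

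The one genuine difference is the closure. The paper writes, at each derivative order, an explicit linear fixed-point equation for $\partial_x^m\partial_v^n f$ and constructs its solution via the convergent Picard iteration of Proposition~\ref{prop:mildsol_linVFP}, whereas you estimate $\partial_x^m\partial_v^n f$ directly and close with a bootstrap $M(t)\leq C_0 + C_1 t^{1/2}M(t)$. That route is shorter on paper, but it tacitly assumes $M(t)<\infty$ a priori; without that, the inequality is vacuous. The paper's iteration is precisely designed to sidestep this, so you should either regularize the data, work on the Picard iterates, or revert to the paper's fixed-point construction to make that step rigorous. Two smaller points. First, your top-order self-referential terms involve $\|\partial_x^a\partial_v^b f(s)\|_{L^1}$ for every pair with $a+b=N+1$, not just $(a,b)=(m,n)$, so the bootstrap quantity must be $M(t)=\max_{a+b=N+1}\sup_{0<s\leq t}s^{3a/2+b/2}\|\partial_x^a\partial_v^b f(s)\|_{L^1}$. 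The power counting still closes since the kernel $|c_{a,b}(t-s)|(t-s)^{-1/2}$ against the profile $s^{-3a/2-b/2}$, weighted by $t^{3m/2+n/2}$, produces the exponent $|m-a|-a+m+\tfrac12\geq\tfrac12$ uniformly in $(a,b)$. Second, your listed base cases are $N=0$ and $(m,n)=(0,1)$ but not $(1,0)$; either treat it directly as the paper does, or observe that your inductive step in fact applies already for $N+1=1$, which it does.
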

\begin{proof}
The result can be proved by induction on the derivatives. We do not fully detail the proof but just the main ideas and crucial points. As a preliminary comment observe that $f\mapsto F_f$ is a bounded operator from $L^1_v(W^{s,1}_x)$ to $W^{s,\infty}_x$ for any $s\in\mathbb{N}$ with  norm equal to $I \|\partial_xK\|_{L^\infty}$. Then, similarly to the proof of Proposition~\ref{prop:mildsol_linVFP} let us notice that $\partial_vf$ should solve 
\begin{multline*}
\partial_vf(t,x,v) = \iint_{\RR^2}\partial_vG(t,x,y,v,w)f^\text{in}(y,w)\dd y\dd w \\
- \int_0^t\iint_{\RR^2}\partial_vG(t-s,x,y,v,w)F_f(s,y)\partial_wf(s,y,w)\dd y\dd w\dd s
\end{multline*}
but using the estimate of Proposition~\ref{prop:kerder} and the same type of iteration argument as Proposition~\ref{prop:mildsol_linVFP}. one builds a unique solution to this fixed point equation which satisfies the desired estimate. 

For the $x$ derivative observe from formula \eqref{eq:Gformula} that 
if one defines 
\[
B(\tau)^{-1} = \left(
\begin{matrix}
b_{yx}(\tau)&b_{yv}(\tau)\\
b_{wx}(\tau)&b_{wv}(\tau)\\
\end{matrix}
\right)\sim_{\tau\to0}
\left(
\begin{matrix}
1&-\tau\\
\alpha\tau&1\\
\end{matrix}
\right)
\]
then $\partial_xG(\tau) = -b_{yx}(\tau)\partial_yG(\tau) - b_{wx}(\tau)\partial_wG(\tau)$, therefore $\partial_xf$ must be the solution of the linear fixed point problem
\begin{multline*}
\partial_xf(t,x,v) = \iint_{\RR^2}\partial_xG(t,x,y,v,w)f^\text{in}(y,w)\dd y\dd w \\
+ \int_0^t\iint_{\RR^2}b_{yx}(t-s)\partial_wG(t-s,x,y,v,w)F_{\partial_yf}(s,y)f(s,y,w)\dd y\dd w\dd s\\
+ \int_0^t\iint_{\RR^2}b_{wx}(t-s)\partial_wG(t-s,x,y,v,w)F_f(s,y)\partial_wf(s,y,w)\dd y\dd w\dd s\\
+ \int_0^t\iint_{\RR^2}b_{yx}(t-s)\partial_wG(t-s,x,y,v,w)F_f(s,y)\partial_yf(s,y,w)\dd y\dd w\dd s,
\end{multline*}
which again can be solved by the estimate of Proposition~\ref{prop:kerder} and the same type of iteration argument as Proposition~\ref{prop:mildsol_linVFP}. Observe that the key element is the singularity of each term of the right hand side (except the first one) at $s\to t^-$ and $s\to0^+$, which must be integrable to close the estimate.

The argument can be pursued for higher order derivatives with a small modification to take into account the growing singularity appearing in the lower order derivatives. For instance one seeks for $\partial_{xv}^2f$ as a solution of 
\begin{multline*}
\partial_{xv}^2f(t,x,v) = \iint_{\RR^2}\partial_{xv}^2G(t,x,y,v,w)f^\text{in}(y,w)\dd y\dd w \\
+ \int_0^t\iint_{\RR^2}b_{xw}(t-s)\partial_w\partial_vG(t-s,x,y,v,w)F_f(s,y)\partial_wf(s,y,w)\dd y\dd w\dd s\\
+ \int_0^{t/2}\iint_{\RR^2}b_{xy}(t-s)\partial_v\partial_yG(t-s,x,y,v,w)F_{f}(s,y)\partial_wf(s,y,w)\dd y\dd w\dd s\\
+ \int_{t/2}^t\iint_{\RR^2}b_{xy}(t-s)\partial_vG(t-s,x,y,v,w)F_{\partial_yf}(s,y)\partial_wf(s,y,w)\dd y\dd w\dd s\\
+ \int_{t/2}^t\iint_{\RR^2}b_{xy}(t-s)\partial_vG(t-s,x,y,v,w)F_f(s,y)\partial^2_{yw}f(s,y,w)\dd y\dd w\dd s.
\end{multline*}
Notice the splitting of the time integral and the appropriate distribution of the $\partial_y$ derivative to ensure integrability of singularities in time. The reasoning can be continued for higher order derivatives and one obtains that $\partial_v^n\partial_x^mf(t)$ has the same singularity as $\partial_v^n\partial_x^mG(t)$ when $t\to0$.
\end{proof}

\begin{cor}\label{cor:regu}
Under the assumptions of Proposition~\ref{prop:mildsol_nonlinVFP}, any mild solution \eqref{eq:VFPnonlin} is a smooth classical solution  $f\in\mathcal{C}([0,\infty), L^1(\RR^2))\cap \mathcal{C}^{\infty}((0,\infty)\times\RR^2)$ satisfying the first two equations of \eqref{eq:VFPnonlin} point-wise for all $t>0$ and $(x,y)\in\RR^2$ and the last equation a.e. $(x,y)\in\RR^2$.
\end{cor}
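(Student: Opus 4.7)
The plan is to combine the $L^1$ regularity estimates on all mixed derivatives of $f$ in $(x,v)$ provided by Proposition~\ref{prop:regu} with the smoothness of the fundamental solution $G$ for $\tau > 0$ to upgrade the mild solution to a smooth classical solution. Throughout I would use that $G \in \mathcal{C}^{\infty}((0, \infty) \times \RR^4)$, which is immediate from the explicit Gaussian formula \eqref{eq:Gformula}.

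First I would establish $\mathcal{C}^\infty$ regularity in the phase-space variables. For any $t > 0$ and any $k \in \NN$, Proposition~\ref{prop:regu} gives $f(t, \cdot, \cdot) \in W^{k, 1}(\RR^2)$. Iterated Sobolev embeddings in dimension two then yield $f(t, \cdot, \cdot) \in \mathcal{C}^\infty(\RR^2)$. In particular the macroscopic density $\rho(t)$ and hence the field $F_f(t) = -I\, \partial_x K \ast \rho(t)$ are smooth in $x$ by the convolution structure, even though $\partial_x K$ is merely bounded.

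Next I would obtain joint smoothness in $(t, x, v)$. Fix $t_0 > 0$ and restart the mild formulation from $t_0/2$. After an integration by parts in $w$ inside the Duhamel integral, which is legitimate thanks to the Gaussian decay of $G$ in $w$ combined with the smoothness and integrability of $f$, the formula becomes
\[ f(t, x, v) = \iint G(t - t_0/2, x, y, v, w)\, f(t_0/2, y, w)\, \dd y\, \dd w - \int_{t_0/2}^t \iint G(t - s, x, y, v, w)\, F_f(s, y)\, \partial_w f(s, y, w)\, \dd y\, \dd w\, \dd s. \]
For $s \in [t_0/2, t)$ the kernel $G(t-s, \ldots)$ is smooth in all variables and tends to the identity $G(0, x, y, v, w) = \delta_0(x-y)\delta_0(v-w)$ as $s \to t^-$, while $F_f\, \partial_w f$ is smooth and integrable by the previous step. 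One can thus differentiate in $t$ under the integral, the boundary term at $s = t$ producing $-F_f(t, x) \partial_v f(t, x, v)$. Bootstrapping on higher $\partial_t$ derivatives, using that $\partial_\tau G$ is also $\mathcal{C}^\infty$ for $\tau > 0$, and exploiting the arbitrariness of $t_0$, yields $f \in \mathcal{C}^\infty((0, \infty) \times \RR^2)$.

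Finally I would verify the PDE pointwise. Define the homogeneous linear VFP operator $\mathcal{L} f := \partial_t f + v \partial_x f - \alpha x \partial_v f - 2\nu \partial_v(v f + \theta \partial_v f)$. By Proposition~\ref{prop:fundamental}, $G$ satisfies $\mathcal{L}_{\tau, x, v} G = 0$, so applying $\mathcal{L}$ to the first term of the Duhamel formula yields zero. Applying $\mathcal{L}$ to the Duhamel integral, the spatial contributions vanish by the equation for $G$, while $\partial_t$ contributes only the boundary term, equal to $-F_f(t, x) \partial_v f(t, x, v)$. Summing, $\mathcal{L} f = -F_f\, \partial_v f$ pointwise on $(0, \infty) \times \RR^2$, which is precisely \eqref{eq:VFPnonlin}; continuity at $t = 0^+$ in $L^1$ is already contained in Proposition~\ref{prop:mildsol_nonlinVFP}. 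The main technical subtlety is the justification of exchanging $\partial_t$ with the Duhamel integral at the endpoint $s = t$, and the preliminary integration by parts in $w$, which transfers the singular factor $\partial_w G(t - s, \ldots)$ onto the smooth $\partial_w f(s, \ldots)$, is in my view the indispensable step that makes this licit.
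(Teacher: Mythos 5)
Your proposal is correct and essentially reconstructs the argument that the paper leaves implicit: Corollary~\ref{cor:regu} is stated without proof, being regarded as a routine consequence of Proposition~\ref{prop:regu} together with the mild (Duhamel) formulation, and your outline supplies precisely the expected bootstrap. The decisive ingredients you identify are the right ones: the endpoint Sobolev embedding $W^{k,1}(\RR^2)\hookrightarrow \mathcal{C}^{k-2}(\RR^2)$ which upgrades the $L^1$ bounds on $\partial_v^n\partial_x^m f(t)$ to $\mathcal{C}^\infty$ regularity in $(x,v)$; the observation that $F_f(t,\cdot)$ is $\mathcal{C}^\infty$ because $\partial_x^m F_f = -I\,\partial_x K\ast\partial_x^m\rho$ pairs an $L^\infty$ kernel against the $L^1$ derivatives of the (already smooth) density; the integration by parts in $w$ that moves the singular $\partial_w G$ onto the smooth integrable factor $F_f\,\partial_w f$; and the Leibniz boundary term at $s=t$, which produces $-F_f(t,x)\partial_v f(t,x,v)$ so that $\mathcal{L}f = -F_f\partial_v f$ is exactly \eqref{eq:VFPnonlin}.

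The one point where your sketch is thinner than it should be is the passage to higher $\partial_t$ regularity. If you differentiate $\int_{t_0/2}^t\iint\partial_\tau G(t-s,\ldots)\,F_f(s,y)\,\partial_w f(s,y,w)\,\dd y\,\dd w\,\dd s$ in $t$ once more, the new boundary term requires $\lim_{\tau\to0^+}\iint\partial_\tau G(\tau,\ldots)\,h\,\dd y\,\dd w$, and $\partial_\tau G$ is \emph{not} uniformly integrable as $\tau\to0^+$ (its $L^1_{y,w}$ norm blows up like $\tau^{-3/2}$, by Proposition~\ref{prop:kerder} and the relation $\partial_\tau G = -v\partial_x G+\alpha x\partial_v G+2\nu\partial_v(vG+\theta\partial_v G)$). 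Your phrase ``using that $\partial_\tau G$ is also $\mathcal{C}^\infty$ for $\tau>0$'' does not by itself resolve this. Two clean fixes: (i) use the adjoint equation satisfied by $G$ in the $(y,w)$ variables to rewrite $\partial_\tau G$ as $(y,w)$-derivatives of $G$ and integrate by parts onto $h = F_f\,\partial_w f$, which is smooth and rapidly integrable by the first step, so the boundary limit exists; or, more economically, (ii) once the PDE holds pointwise with a $\mathcal{C}^\infty_{x,v}$ right-hand side, bootstrap temporal regularity directly from the PDE itself, using the continuity equation \eqref{eq:conteq} to express $\partial_t\rho = -\partial_x j$ (and hence $\partial_t F_f$) in terms of quantities already known to be smooth, thereby bypassing the Duhamel integral at higher order. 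With either repair your argument is complete.
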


\subsection{Proof of Theorem~\ref{theo:well}}\label{sec:proof_theo_1}
The proof of Theorem~\ref{theo:well} is completed by combining Proposition~\ref{prop:mildsol_nonlinVFP}, Proposition~\ref{prop:regu} and  Corollary~\ref{cor:regu}.

\subsection{Additional properties}

We end Section~\ref{sec:cauchy} with additional properties on the Vlasov-Fokker-Planck equation. 

\begin{lem}[$L^p$ norms] \label{lem:Lp}
For any $T>0$ and $p\in[1,+\infty]$ one has 
\[
\|f\|_{L^\infty(0,T;L^p(\RR^2))}\ \leq\ \|f^\text{in}\|_{L^p(\RR^2)}e^{2\nu(1-1/p)T}
\]
\end{lem}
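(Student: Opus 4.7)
The plan is a classical $L^p$ energy estimate, exploiting the divergence-free structure of the Vlasov part of \eqref{eq:VFPnonlin} and the smoothness of $f$ on positive times given by Theorem~\ref{theo:well}. For $p\in[1,\infty)$ (assuming $f^\text{in}\in L^p(\RR^2)$, else the bound is trivial), I would multiply the equation by $pf^{p-1}$, which is well-defined since $f\geq0$, and integrate over $\RR^2$. The free transport contributions vanish, because the phase space vector field $(v,F_f(t,x)-\alpha x)$ is divergence-free in $(x,v)$, so $\int p f^{p-1}\bigl(v\partial_xf + (F_f-\alpha x)\partial_v f\bigr)\dd x\dd v = \int \mathrm{div}_{x,v}\bigl((v,F_f-\alpha x)f^p\bigr)\dd x\dd v = 0$. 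On the Fokker-Planck side, the friction $2\nu\partial_v(vf)$ yields $2\nu(p-1)\|f\|_{L^p}^p$ after a single integration by parts in $v$, while the diffusion $2\nu\theta\partial_v^2f$ produces the non-positive term $-\tfrac{8\nu\theta(p-1)}{p}\|\partial_v(f^{p/2})\|_{L^2}^2$, which I discard.

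Collecting these contributions yields the differential inequality $\frac{d}{dt}\|f\|_{L^p}^p\leq 2\nu(p-1)\|f\|_{L^p}^p$ on $(0,T]$, and Grönwall's lemma immediately gives $\|f(t)\|_{L^p}\leq\|f^\text{in}\|_{L^p}e^{2\nu(1-1/p)t}$. The initial value is recovered by the $\mathcal{C}([0,\infty),L^1)$ continuity of $f$ together with lower semicontinuity of the $L^p$ norm (or by first running the argument with $f^\text{in}\in L^1\cap L^\infty$ and then using a density argument). The endpoint $p=\infty$ follows by sending $p\to\infty$ in the finite-$p$ bound, since $\|g\|_{L^p}\to\|g\|_{L^\infty}$ whenever $g\in L^q\cap L^\infty$ for some $q<\infty$, which is the case here thanks to mass conservation and the preliminary $L^\infty$ bound already established in Section~\ref{sec:cauchy}.

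The main technical obstacle is making the integration by parts rigorous: although $f(t,\cdot,\cdot)\in\mathcal{C}^\infty\cap L^1$ for $t>0$ by Theorem~\ref{theo:well}, there is no a priori pointwise decay at infinity in $v$ that would kill the boundary terms. To handle this I would introduce a smooth radial cutoff $\chi_R(x,v)$ equal to $1$ on the disk of radius $R$ and vanishing outside radius $2R$, apply the above computation to $\int \chi_R f^p$, and then let $R\to\infty$. The commutators $[v\partial_x+(F_f-\alpha x)\partial_v,\chi_R]$ and $[2\nu\theta\partial_v^2,\chi_R]$ involve $\nabla\chi_R$, which is supported on an annulus of width $R$ and bounded by $1/R$; combined with the integrability of $f$, $vf$, and $\partial_v f$ coming from the regularity estimates of Theorem~\ref{theo:well} (and, for the latter, from the propagation of moments which is standard), these error terms vanish as $R\to\infty$ by dominated convergence, closing the estimate.
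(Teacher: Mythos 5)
Your proposal follows exactly the same route as the paper: multiply by $pf^{p-1}$, use that the Vlasov field $(v,F_f-\alpha x)$ is divergence-free so the transport terms vanish, integrate the Fokker-Planck part by parts to extract the friction contribution $2\nu(p-1)\|f\|_{L^p}^p$ while discarding the non-positive diffusion term, apply Gr\"onwall, and recover $p=\infty$ as a limit. The extra discussion of cutoffs to justify the integrations by parts is a nice addition the paper elides, but the argument is otherwise identical.
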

\begin{proof}
Let $p<+\infty$. One has
\[
\begin{aligned}
\frac{\dd}{\dd t}\|f(t)\|_{L^p(\RR^2)}^p &= \iint_{\RR^2}\left(-\partial_x(vf^p) + \partial_v((F_f(t,x) -\alpha x)f^p)+2\nu p f^{p-1}\partial_v(vf+\partial_vf)\right)\dd v\dd x\\&= -\iint_{\RR^2}2\nu p(p-1) f^{p-2}\partial_vf(vf+\theta\partial_vf)\dd v\dd x\\
&\leq -\iint_{\RR^2}2\nu (p-1) v\partial_vf^p\dd v\dd x = 2\nu (p-1)\|f(t)\|_{L^p(\RR^2)}^p.
\end{aligned}
\]
The claim is obtained after a Grönwall lemma. The case $p=+\infty$ is obtained as a limit.
\end{proof}

In the next lemma we recall the continuity equation.

\begin{lem}[Continuity equation]\label{lem:conteq}
It holds that
\begin{equation}\label{eq:conteq}
\partial_t\rho + \partial_x j = 0, \quad\text{where}\quad j(t,x)
 = \int_\RR v f(t,x,v)\dd v.
\end{equation}
\end{lem}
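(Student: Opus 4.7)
The plan is to integrate the first equation of \eqref{eq:VFPnonlin} in the velocity variable over $\RR$ and verify that all boundary contributions at $|v|\to\infty$ vanish, so that the diffusion, friction and force terms disappear and only the conservative transport terms remain. By Corollary~\ref{cor:regu}, for every $t>0$ the PDE holds pointwise and $f(t)\in \cC^\infty$, while Proposition~\ref{prop:regu} provides $L^1_{x,v}$ bounds on $\partial_v f(t)$ and $\partial_x f(t)$ for $t>0$.

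The core ingredient is pointwise decay of $f(t,x,\cdot)$ and of $\partial_v f(t,x,\cdot)$ at infinity. I would extract this directly from the mild representation
\[
f(t,x,v) = \iint G(t,x,y,v,w)f^\text{in}(y,w)\dd y\dd w + \int_0^t\!\!\iint\partial_wG(t-s,x,y,v,w)F_f(s,y)f(s,y,w)\dd y\dd w\dd s.
\]
By Proposition~\ref{prop:fundamental}, $G(t,\cdot,y,\cdot,w)$ is a bivariate Gaussian in $(x,v)$ with covariance $\Sigma(t)$ bounded below uniformly in $(y,w)$, and the same holds for $\partial_w G$. Hence, for each fixed $t>0$ and $x\in\RR$, both $f(t,x,v)$ and $\partial_v f(t,x,v)$ decay in $v$ at least like a Gaussian, with an integrable-in-$v$ envelope that is uniform on compacts in $x$. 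In particular $v^k f(t,x,v)\to 0$ and $\partial_v f(t,x,v)\to 0$ as $|v|\to\infty$ for any $k\in\NN$.

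Granted these decay properties, I would compute:
\[
\int_\RR \partial_t f\,\dd v = \partial_t\rho,\qquad \int_\RR v\partial_x f\,\dd v = \partial_x\!\int_\RR vf\,\dd v = \partial_x j,
\]
where the derivative–integral swap is justified by the uniform Gaussian envelope and the $L^1$ bound on $\partial_x f(t)$. For the Vlasov force term, since $F_f(t,x)-\alpha x$ does not depend on $v$,
\[
\int_\RR (F_f(t,x)-\alpha x)\partial_v f\,\dd v = (F_f(t,x)-\alpha x)\bigl[f\bigr]_{v=-\infty}^{v=+\infty} = 0.
\]
Similarly, the Fokker--Planck term on the right-hand side integrates to zero as a total $v$-derivative with vanishing boundary terms $[vf + \theta\partial_v f]_{-\infty}^{+\infty}=0$. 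Collecting these identities yields \eqref{eq:conteq}.

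The only nontrivial step is the rigorous decay at $|v|\to\infty$, which is supplied by the Gaussian structure of the fundamental solution and the Duhamel formula; the rest of the argument is a routine application of Fubini and dominated convergence using the regularity estimates already established in Section~\ref{sec:cauchy}.
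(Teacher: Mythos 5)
Your proof is correct and follows the same strategy as the paper, whose entire proof reads ``Integrate \eqref{eq:VFPnonlin} in the $v$ variable.'' You simply make explicit the Gaussian decay coming from the Duhamel representation that justifies the vanishing boundary terms and the Fubini swap, details the paper leaves implicit.
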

\begin{proof}
Integrate \eqref{eq:VFPnonlin} in the $v$ variable.
\end{proof}

\section{Steady states and long-time behavior}\label{sec:long}

\subsection{Haissinski solutions}
In this section we investigate the existence and uniqueness of stationary solutions to equation \eqref{eq:VFPnonlin} of the form
\begin{equation} \label{def:haissinski} 
f^\infty(x,v) = \frac{\mathcal{M}}{\sqrt{2\pi\theta}}e^{-\frac{v^2}{2\theta}}\sigma^\infty(x)\,,
\end{equation} 
where $\mathcal{M}>0$ denotes the mass and $\sigma^\infty$ is normalized, namely
\[
\frac{1}{\mathcal{M}}\iint_{\RR^2} f^\infty(x,v) \dd v\dd x = \int_\RR\sigma^\infty(x)\dd x = 1.
\]
In order to solve \eqref{eq:VFPnonlin}, one observes that $\sigma^\infty$  should be a fixed point of the mapping
\begin{equation}\label{eq:HaissinskiOperatorT}
\cT(\sigma)(x):=\dfrac{e^{-\frac{\alpha x^2}{2\theta}- \frac{I \mathcal{M}}{\theta}K\ast\sigma(x)}}{\int_\RR e^{-\frac{\alpha y^2}{2\theta} - \frac{I \mathcal{M}}{\theta}K\ast\sigma(y)}\dd y}.
\end{equation}
Let us introduce the Banach spaces 
\[
\mathcal{X} = \{\sigma\in \mathcal{C}(\RR)\ \text{such that}\ \sup_{x\in\RR}|\sigma(x)|e^{\frac{\beta x^2}{2}}<+\infty\}
\]
with $0<\beta<\alpha/\theta$ endowed with the norm
\[
\|\sigma\|_{\mathcal{X}} = \sup_{x\in\RR}|\sigma(x)|\,e^{\frac{\beta x^2}{2}}.
\]
We use Schaefer's fixed point theorem to show that $\cT$ has a fixed point.

\begin{prop}\label{prop:existence_steady}
Assume that $K\in L^\infty$ and  $\partial_xK\in L^p$ for some $p\in[1,\infty]$. 
 Then there exists a fixed point $\sigma^\infty$ to the map $\cT:\mathcal{X}\to\mathcal{X}$. Moreover, $\sigma^\infty$ is positive, has integral $\int_\RR\sigma^\infty=1$ and $\partial_x^n\sigma^\infty\in\cX$ for all $n\geq0$.
\end{prop}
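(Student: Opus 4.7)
The strategy is to apply Schaefer's fixed point theorem to $\cT:\cX\to\cX$. This asks for three verifications: that $\cT$ is continuous, that it is compact (maps bounded sets to relatively compact sets), and that the set $S=\{\sigma\in\cX:\sigma=\lambda\cT(\sigma)\text{ for some }\lambda\in[0,1]\}$ is bounded. The key structural observation, which is why this particular normalized formulation of $\cT$ is chosen, is that $\int_\RR\cT(\sigma) = 1$ by construction for every $\sigma$, giving a free $L^1$ control on any element of $S$ regardless of $\sigma$.

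For well-posedness and continuity, I would note that any $\sigma\in\cX$ satisfies $\|\sigma\|_{L^1}\lesssim_\beta\|\sigma\|_\cX$ by Gaussian decay, so $\|K\ast\sigma\|_{L^\infty}\leq\|K\|_{L^\infty}\|\sigma\|_{L^1}$ is controlled, and both numerator and denominator in \eqref{eq:HaissinskiOperatorT} are well-defined and strictly positive. The pointwise bound $\cT(\sigma)(x)\leq C(\|\sigma\|_\cX)e^{-\alpha x^2/(2\theta)}$ places $\cT(\sigma)$ in $\cX$ as soon as $\beta<\alpha/\theta$. Continuity follows because $\cX$-convergence implies $L^1$-convergence by dominated convergence, hence uniform convergence of $K\ast\sigma_n$, and then uniform convergence of numerator and denominator of $\cT(\sigma_n)$ with a common Gaussian envelope. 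Compactness on bounded sets $B\subset\cX$ is obtained via a weighted Ascoli-Arzel\`a argument: the uniform Gaussian decay just described handles the tails, while equicontinuity follows from
\[
\partial_x\cT(\sigma)(x) = \cT(\sigma)(x)\Bigl(-\tfrac{\alpha x}{\theta} - \tfrac{I\cM}{\theta}(\partial_xK)\ast\sigma(x)\Bigr),
\]
combined with the H\"older estimate $|(\partial_xK)\ast\sigma(x)|\leq\|\partial_xK\|_{L^p}\|\sigma\|_{L^{p'}}$, uniformly bounded on $B$ since the Gaussian weight of $\cX$ controls every $L^q$ norm.

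The heart of the argument is the a priori bound on $S$. If $\sigma=\lambda\cT(\sigma)$ with $\lambda\in[0,1]$, then $\sigma\geq 0$ (since $\cT(\sigma)>0$) and integrating,
\[
\int_\RR\sigma = \lambda\int_\RR\cT(\sigma) = \lambda\leq 1.
\]
This free $L^1$ bound yields $\|K\ast\sigma\|_{L^\infty}\leq\|K\|_{L^\infty}$, so the previous pointwise estimate improves to $\sigma(x)\leq\cT(\sigma)(x)\leq Ce^{-\alpha x^2/(2\theta)}$ with $C$ depending only on $I,\cM,\theta,\alpha,\|K\|_{L^\infty}$. Since $\beta<\alpha/\theta$ this gives a uniform $\cX$-bound on $S$, and Schaefer's theorem produces a fixed point $\sigma^\infty\in\cX$.

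Positivity and mass $\int_\RR\sigma^\infty=1$ are immediate from $\sigma^\infty=\cT(\sigma^\infty)$. For smoothness, I would bootstrap on this identity: a first differentiation gives $\partial_x\sigma^\infty = \sigma^\infty\varphi_1$ with $\varphi_1 = -\alpha x/\theta - (I\cM/\theta)(\partial_xK)\ast\sigma^\infty$, whose growth is at most linear. For $k\geq 2$ the $k$-th derivative of the exponent reduces to $(\partial_xK)\ast\partial_x^{k-1}\sigma^\infty$, avoiding the need for higher regularity of $K$; by Fa\`a di Bruno each $\partial_x^n\sigma^\infty$ is then a product of $\sigma^\infty$ with a factor of polynomial growth, hence lies in $\cX$. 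The main obstacle here is really conceptual: the choice of normalization built into $\cT$. Without it, an inequality like $\|\cT(\sigma)\|_\cX\leq F(\|\sigma\|_\cX)$ would produce no a priori bound and Schaefer's hypothesis would fail.
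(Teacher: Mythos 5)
Your proposal is correct and follows essentially the same route as the paper's proof: Schaefer's fixed point theorem on $\cX$, compactness via weighted Arzel\`a–Ascoli using the Gaussian envelope and the uniform bound on $\partial_x\cT(\sigma)$, the normalization $\int_\RR\cT(\sigma)=1$ to get the free a priori bound $\|\sigma\|_{L^1}=\lambda\leq1$ on the Schaefer set, and an inductive bootstrap putting $\partial_x^n\sigma^\infty$ in $\cX$ by transferring derivatives of $K$ onto $\sigma^\infty$ inside the convolution. You additionally make the continuity of $\cT$ explicit, which the paper leaves implicit; otherwise the arguments coincide.
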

\begin{proof}
 To begin with, observe that thanks to Young's convolution inequality and Hölder inequality,  for any $G\in L^p$, one has the estimate
\[
\|G\ast\sigma\|_{L^\infty}\leq \|G\|_{L^p}\|\sigma\|_{L^{p'}}\leq C_{p} \|G\|_{L^p}\|\sigma\|_{\cX}
\]
with a finite $C_p >0$ and $p'=(1-p^{-1})^{-1}$. Then we complete the proof in three steps.

\noindent\emph{Step 1: $\cT$ is compact.}\\ Let $R>0$ such that $\|\sigma\|_{\mathcal{X}}\leq R$. Then for $\varepsilon = (\alpha/\theta)-\beta>0$.
\[
0< \cT(\sigma)(x)e^{\frac{\beta x^2}{2}}\leq (2\pi\theta/\alpha)^{\frac12}e^{2I\mathcal{M}\|K\|_{L^\infty}\|\sigma\|_{L^1}-\frac{\varepsilon x^2}{2}}\leq (2\pi\theta/\alpha)^{\frac12}e^{2I\mathcal{M}C_1\|K\|_{L^\infty}R-\frac{\varepsilon x^2}{2}}
\]
and
\[
|\partial_x\cT(\sigma)(x)|e^{\frac{\beta x^2}{2}}\leq (2\pi\theta/\alpha)^{\frac12}e^{2I\mathcal{M}C_1\|K\|_{L^\infty}R}(|x|+I\mathcal{M}C_p\|\partial_xK\|_{L^p} R)e^{-\frac{\varepsilon x^2}{2}}
\]
In particular $\cT(\sigma)\in\cX$ and there is $C_R>0$ depending on  $\|K\|_{L^\infty}$, $\|\partial_xK\|_{L^p}$ and the parameters such that
\[
\cT(\sigma)\in Y_R := \{\nu\in\cX, \|\nu\|_{\cX} + \|\partial_x\nu\|_{\cX}\leq C_R\}.
\]
Let $(\nu_n)_n$ be a sequence in $Y_R$ and for any positive integer $m$ define $K_m = [-m,m]$. Then $(\nu_n|_{K_m})_n$ is equicontinuous, so by Arzela-Ascoli and a diagonal argument  one can find a subsequence $(n_k)_k$ and a continuous function $\nu$ such that for all $m\geq1$ 
\[
\|\nu_{n_k}-\nu\|_{\mathcal{C}(K_m)}\to 0\,,\text{ as }k\to\infty
\]
but since the sequence and limit belong to a ball of $\cX$ of radius at most $C_R$, one has
\[
\|\nu_{n_k}-\nu\|_{\cX}\leq \|(\nu_{n_k}-\nu)e^{\beta x^2/2}\|_{\mathcal{C}(K_m)}+2C_Re^{-\varepsilon m^2/2}
\]
so one can take $m$ large enough and then $k$ large enough to make the r.h.s. as small as desired. It follows that $Y_R$ is sequentially compact, and thus $\cT$ is compact. 

\noindent\emph{Step 2: Schaefer's condition.}\\ Let $\lambda\in[0,1]$ and consider $\sigma\in\cX$ such that 
\[
\lambda \cT(\sigma) = \sigma.
\]
Then $\|\sigma\|_{L^1} = \lambda\leq 1$ therefore
\[
\|\sigma\|_{\cX}\leq\|\cT(\sigma)\|_{\cX}\leq(2\pi\theta/\alpha)^{\frac12}e^{2I\mathcal{M}\|K\|_{L^\infty}}
\]
so $\cT$ has a fixed point $\sigma^\infty$.

\noindent\emph{Step 3: Properties of fixed points $\sigma^\infty$.}\\ First, since $\sigma^\infty = \cT(\sigma^\infty)$, $\sigma^\infty$ is positive and has an integral equal to $1$. Then, observe that by successive differentiation that \[\partial_x^{n+1}\sigma^\infty(x) = P(x,{\partial_xK\ast}\sigma^\infty,\dots,{\partial_xK\ast}\partial_x^n\sigma^\infty)\cT(\sigma^\infty)\] for some multivariate polynomial $P(X_1, X_2, \dots, X_{n+1})$. We proceed by induction. Assume that $\partial_x^k\sigma^\infty\in\cX$ for all $0\leq k\leq n$, then for any polynomial, \[x\to e^{-\frac{\beta x^2}{2}}P(x,{\partial_xK\ast}\sigma^\infty(x),\dots,{\partial_xK\ast}\partial_x^n\sigma^\infty(x))\in L^\infty.\] Moreover $x\mapsto \cT(\sigma^\infty(x))e^{\frac{\beta x^2}{2}}\in\cX$ and therefore $\partial_x^{n+1}\sigma^\infty\in\cX$.
\end{proof}

\begin{prop}[Uniqueness at low current]\label{prop:uniqueness_steady}
If the current intensity is such that
\[
I < I^\text{thres}:=\frac{C \theta}{\mathcal{M}\|K\|_{L^\infty}}
\]
with $C>0$ a universal constant, then $\cT: L^1\to L^1$ has at most one fixed point.
\end{prop}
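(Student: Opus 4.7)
\medskip

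\textbf{Proof proposal.} The plan is to show that for $I$ small enough the map $\cT$ is a strict contraction on $L^1(\RR)$, from which uniqueness of the fixed point is immediate. The first key observation is that any $\sigma\in L^1$ satisfying $\sigma=\cT(\sigma)$ is automatically non-negative and of integral $1$, since $\cT$ produces a probability density by construction. In particular $\|K\ast\sigma\|_{L^\infty}\leq\|K\|_{L^\infty}$ uniformly over all fixed points, which will allow us to turn exponential estimates into uniform constants.

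Given two fixed points $\sigma_1,\sigma_2\in L^1$, I would set
\[
\Phi_i(x):=-\frac{\alpha x^2}{2\theta}-\frac{I\cM}{\theta}K\ast\sigma_i(x),\qquad Z_i:=\int_\RR e^{\Phi_i(y)}\dd y,
\]
so that $\cT(\sigma_i)=e^{\Phi_i}/Z_i$, and decompose
\[
\cT(\sigma_1)-\cT(\sigma_2)=\frac{e^{\Phi_1}-e^{\Phi_2}}{Z_1}+\cT(\sigma_2)\,\frac{Z_2-Z_1}{Z_1}.
\]
Taking the $L^1$-norm, using $\|\cT(\sigma_2)\|_{L^1}=1$ and $|Z_1-Z_2|\leq\int|e^{\Phi_1}-e^{\Phi_2}|$, yields
\[
\|\cT(\sigma_1)-\cT(\sigma_2)\|_{L^1}\leq \frac{2}{Z_1}\int_\RR |e^{\Phi_1(x)}-e^{\Phi_2(x)}|\dd x.
\]
The elementary inequality $|e^a-e^b|\leq e^{\max(a,b)}|a-b|$ together with Young's convolution estimate $\|K\ast(\sigma_1-\sigma_2)\|_{L^\infty}\leq\|K\|_{L^\infty}\|\sigma_1-\sigma_2\|_{L^1}$ gives
\[
\int_\RR|e^{\Phi_1}-e^{\Phi_2}|\dd x\leq \frac{I\cM\|K\|_{L^\infty}}{\theta}\|\sigma_1-\sigma_2\|_{L^1}\int_\RR e^{\max(\Phi_1,\Phi_2)}\dd x.
\]

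Finally, the uniform bounds $-\tfrac{I\cM\|K\|_{L^\infty}}{\theta}\leq \Phi_i(x)+\tfrac{\alpha x^2}{2\theta}\leq \tfrac{I\cM\|K\|_{L^\infty}}{\theta}$ (which rely on $\sigma_i$ being a probability density) imply $\int e^{\max(\Phi_1,\Phi_2)}/Z_1\leq e^{2I\cM\|K\|_{L^\infty}/\theta}$. Setting $u:=I\cM\|K\|_{L^\infty}/\theta$ we obtain
\[
\|\cT(\sigma_1)-\cT(\sigma_2)\|_{L^1}\leq 2u\,e^{2u}\,\|\sigma_1-\sigma_2\|_{L^1}.
\]
Since $u\mapsto 2ue^{2u}$ is continuous, increasing, vanishes at $0$ and equals $1$ at some universal $u_\star>0$, the right-hand side is strictly less than $\|\sigma_1-\sigma_2\|_{L^1}$ as soon as $u<u_\star$, i.e. $I<C\theta/(\cM\|K\|_{L^\infty})$ with $C:=u_\star$; this forces $\sigma_1=\sigma_2$ and yields the advertised threshold. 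There is no real obstacle here: the only delicate point is keeping track of the exponential constants so that the contraction factor depends on the \emph{single} dimensionless parameter $u$, which is what allows the smallness condition to be written as in the statement.
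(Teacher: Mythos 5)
Your proof is correct and takes essentially the same approach as the paper: show that for $u = I\mathcal{M}\|K\|_{L^\infty}/\theta$ small enough, any two fixed points $\sigma_1,\sigma_2$ (necessarily probability densities) satisfy $\|\sigma_1-\sigma_2\|_{L^1}\leq c(u)\|\sigma_1-\sigma_2\|_{L^1}$ with $c(u)<1$, forcing $\sigma_1=\sigma_2$. The only difference is in the algebra used to compare the two normalized Gibbs measures: you use the telescoping decomposition $\cT(\sigma_1)-\cT(\sigma_2)=\tfrac{e^{\Phi_1}-e^{\Phi_2}}{Z_1}+\cT(\sigma_2)\tfrac{Z_2-Z_1}{Z_1}$, treating numerator and partition function separately (hence the factor $2$), whereas the paper symmetrizes the difference into a single ratio of double integrals $\iint e^{-\alpha(x^2+y^2)/2\theta}\,|e^{-\frac{I\mathcal{M}}{\theta}(K\ast\sigma_1(x)+K\ast\sigma_2(y))}-e^{-\frac{I\mathcal{M}}{\theta}(K\ast\sigma_2(x)+K\ast\sigma_1(y))}|\,\dd x\dd y$ over $Z_1Z_2$. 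Both routes rely on the same two ingredients (Young's inequality to bound $\|K\ast\sigma\|_{L^\infty}$ by $\|K\|_{L^\infty}$ for probability densities, and the mean-value bound $|e^a-e^b|\leq e^{\max(a,b)}|a-b|$) and end up with a contraction factor of the form $2u\,e^{\kappa u}$; the precise value of $\kappa$ (and hence the universal constant $C$, expressible in terms of the Lambert $W$ function) differs slightly between the two but is immaterial for the statement.
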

\begin{proof}
From the proof of Proposition~\ref{prop:existence_steady}, it is clear that $\cT: L^1\to L^1$ is a well-defined. Let $\sigma_1,\sigma_2\in L^1$ be two fixed points. Then one obtains
\[
\|\sigma_1-\sigma_2\|_{L^1}\leq \frac{\iint_{\RR^2}e^{-\frac{\alpha(x^2+y^2)}{2\theta}}\left|e^{-\frac{I\mathcal{M}}{\theta}(K\ast\sigma_1(x) + K\ast\sigma_2(y))} - e^{-\frac{I\mathcal{M}}{\theta}(K\ast\sigma_2(x) + K\ast\sigma_1(y))}\right|\dd x\dd y}{\iint_{\RR^2}e^{-\frac{\alpha(x^2+y^2)}{2\theta}-\frac{I\mathcal{M}}{\theta}(K\ast\sigma_1(x) + K\ast\sigma_2(y))}\dd x\dd y},
\]
which yields, since $\|K\ast\sigma\|_{L^\infty}\leq \|K\|_{L^\infty}\|\sigma\|_{L^1}$, that
\[
\|\sigma_1-\sigma_2\|_{L^1}\leq e^{\frac{3 I \mathcal{M}}{\theta}\|K\|_{L^\infty}}\tfrac{2I\mathcal{M}}{\theta}\|K\|_{L^\infty}\|\sigma_1-\sigma_2\|_{L^1}.
\]
The contraction property then applies as soon as the condition of the statement is satisfied with $C = \tfrac13 W(\tfrac 32)\simeq 0.24$, where $W$ is the Lambert function. 
\end{proof}
{We end this section with upper bounds on the potential associated with the density $\sigma^\infty$.

\begin{lem}\label{lem:constVinf}
The quantity $V^\infty = -\ln(\sigma^\infty)$ 
can be estimated by
\[
|\partial_x^2 V^\infty(x)|\leq C^\infty = \tfrac{2\alpha}{\theta} + \left(\tfrac{I\mathcal{M}}{\theta}\right)^2\|\partial_x K\|_{L^\infty}^2,
\]
\end{lem}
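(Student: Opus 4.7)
The plan is to differentiate $V^\infty=-\ln\sigma^\infty$ using the fixed-point equation, then combine a Brascamp--Lieb-type integration by parts with a short bootstrap argument.

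Since $\sigma^\infty=\cT(\sigma^\infty)$, one has $V^\infty(x)=\tfrac{\alpha x^2}{2\theta}+\tfrac{I\mathcal{M}}{\theta}K\ast\sigma^\infty(x)+\mathrm{cst}$, and differentiating twice (distributing only one derivative onto $K$, since $K\in W^{1,\infty}$ is typically no better) yields
\[
\partial_x^2 V^\infty(x)=\frac{\alpha}{\theta}+\frac{I\mathcal{M}}{\theta}\int\partial_x K(x-y)\,\partial_y\sigma^\infty(y)\,\dd y.
\]
Writing $\partial_y\sigma^\infty=-\sigma^\infty\partial_y V^\infty$ and using $\|\partial_x K\|_{L^\infty}$ together with Cauchy--Schwarz in the probability measure $\sigma^\infty\,\dd y$ gives
\[
\Bigl|\partial_x^2 V^\infty(x)-\frac{\alpha}{\theta}\Bigr|\leq\frac{I\mathcal{M}}{\theta}\|\partial_x K\|_{L^\infty}\Bigl(\int(\partial_y V^\infty)^2\sigma^\infty\,\dd y\Bigr)^{1/2}.
\]

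Next I would use the Gibbs integration-by-parts identity $\int(\partial_y V^\infty)^2\sigma^\infty\,\dd y=\int\partial_y^2 V^\infty\,\sigma^\infty\,\dd y$, obtained by writing $\partial_y V^\infty\,\sigma^\infty=-\partial_y\sigma^\infty$ and integrating by parts; the boundary terms vanish because $\sigma^\infty$ and its derivatives enjoy the Gaussian decay of Proposition~\ref{prop:existence_steady}, while $\partial_y V^\infty$ grows only linearly. Setting $M:=\sup_x|\partial_x^2 V^\infty(x)|$ (which is finite a priori, since $\partial_x^2 V^\infty=\alpha/\theta+(I\mathcal{M}/\theta)\,\partial_x K\ast\partial_x\sigma^\infty$ and $\partial_x K\in L^\infty$, $\partial_x\sigma^\infty\in\cX\subset L^1$ by Proposition~\ref{prop:existence_steady}), the right-hand side above is dominated by $\sqrt{M}$.

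Taking the supremum over $x$ then produces the self-referential inequality
\[
M\leq\frac{\alpha}{\theta}+\frac{I\mathcal{M}}{\theta}\|\partial_x K\|_{L^\infty}\sqrt{M}.
\]
Applying Young's inequality $ab\leq\tfrac12 a^2+\tfrac12 b^2$ with $a=\sqrt{M}$ and $b=(I\mathcal{M}/\theta)\|\partial_x K\|_{L^\infty}$ absorbs a factor of $M/2$ on the right and leaves $M\leq 2\alpha/\theta+(I\mathcal{M}/\theta)^2\|\partial_x K\|_{L^\infty}^2=C^\infty$, which is the desired pointwise bound.

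The main obstacle is the derivative count: the statement bounds $\partial_x^2 V^\infty$ in terms only of $\|\partial_x K\|_{L^\infty}$ (not $\|\partial_x^2 K\|_{L^\infty}$, which is unavailable under $K\in W^{1,\infty}$). This forces one to keep one derivative on $\sigma^\infty$ and convert it to a factor of $\partial V^\infty$ via the Gibbs relation; the resulting term $\mathbb{E}_{\sigma^\infty}[(\partial V^\infty)^2]$ cannot be bounded directly without Brascamp--Lieb or a bootstrap, so the key insight is to use the Gaussian integration by parts to trade $\mathbb{E}[(\partial V^\infty)^2]$ for $\mathbb{E}[\partial^2 V^\infty]\leq M$ and then close via Young's inequality.
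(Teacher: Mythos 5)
Your proof is correct and follows essentially the same route as the paper: express $\partial_x^2 V^\infty$ via the fixed-point equation keeping only one derivative on $K$, convert $\partial_x\sigma^\infty$ to $\partial_x V^\infty\,\sigma^\infty$, trade $\int(\partial_x V^\infty)^2\sigma^\infty$ for $\int\partial_x^2 V^\infty\sigma^\infty\leq M$ by integration by parts, and close with Young's inequality. The only cosmetic difference is that the paper uses Young's inequality pointwise on $|\partial_x V^\infty|$ where you use Cauchy--Schwarz, but these are interchangeable and produce the identical constant $C^\infty$.
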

\begin{proof}
Using $V^\infty = -\ln(\cT(\sigma^\infty))$ and Young's convolution inequality one obtains
\[
\|\partial_x^2 V^\infty\|_{L^\infty}\leq \tfrac{\alpha}{\theta} + \tfrac{I\mathcal{M}}{\theta}\|\partial_x K\|_{L^\infty}\|\partial_x \sigma^\infty\|_{L^1}.
\]
Young's inequality then yields
\[
\|\partial_x \sigma^\infty\|_{L^1} = \int |\partial_x V^\infty|e^{-V^\infty}\leq \frac12 \int \left(\tfrac{I\mathcal{M}}{\theta}\|\partial_x K\|_{L^\infty} + \left(\tfrac{I\mathcal{M}}{\theta}\|\partial_x K\|_{L^\infty}\right)^{-1}|\partial_x V^\infty|^2\right)e^{-V^\infty}
\]
where
\[
\int|\partial_x V^\infty|^2e^{-V^\infty} = \int\partial_x^2 V^\infty e^{-V^\infty}\leq \|\partial_x^2 V^\infty\|_{L^\infty}.
\]
Combining everything yield the result.
\end{proof}

%
\subsection{Proof of Theorem~\ref{theo:steady}}\label{sec:proof_theo_2}
The proof of Theorem~\ref{theo:steady} is obtained by combining the results of Proposition~\ref{prop:existence_steady} and Proposition~\ref{prop:uniqueness_steady}.

\subsection{Free energy estimate and symmetry of the interaction kernel}\label{sec:freeenerg}

Next we derive a so-called free energy (or entropy) estimate for the solutions of \eqref{eq:VFPnonlin}. Let us precise that here, free energy does not refer to a particular physical quantity in the context of particle accelerators. The name is chosen by mathematical analogy with global quantities arising in other models. Let us first decompose the interaction potential into an even and an odd part, namely
\begin{equation}\label{eq:oddeven}
K^\text{e}(x) = \frac{K(x) + K(-x)}{2}\,, \quad K^\text{o}(x) = \frac{K(x) - K(-x)}{2}\,.
\end{equation}
We define
\begin{multline}\label{eq:entropy}
\cE_{\blue{f}}(t) = \theta\iint_{\RR^2}f\log f\dd v\dd x + \iint_{\RR^2}\frac{v^2 + \alpha x^2}{2}f\dd v\dd x\\ + \frac{I}{2}\iint_{\RR^2}K(x-y)\rho(t,x)\rho(t,y)\dd x\dd y.
\end{multline}
\begin{rema}
Observe that for the last term, \blue{$K$ can be replaced by $K^\text{e}$} without changing the value of the integral.
\end{rema}

\begin{prop}[Entropy estimate]\label{prop:freeenerg}
One has 
\[
\frac{\dd \cE_{\blue{f}}(t)}{\dd t} + 2\nu\iint_{\RR^2} \frac{1}{f}|vf+\theta\partial_vf|^2\dd v\dd x\ =\ - I\iint\partial_xK^\text{o}(x-y)j(t,x) \rho(t,y)\dd x\dd y.
\]
where $j$ is defined in \eqref{eq:conteq}.
\end{prop}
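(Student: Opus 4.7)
The plan is to compute $\tfrac{\dd}{\dd t}\cE(t)$ by differentiating each of the three contributions of \eqref{eq:entropy} separately, using \eqref{eq:VFPnonlin} for the first two and Lemma~\ref{lem:conteq} for the last one, and then combining the resulting expressions. The smoothness and decay needed to justify all the integrations by parts and the vanishing of boundary terms follow from Theorem~\ref{theo:well} and Lemma~\ref{lem:Lp}.

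For the entropy term $\theta\iint f\log f\,\dd v\,\dd x$, I will multiply \eqref{eq:VFPnonlin} by $\theta\log f$ and integrate. The Hamiltonian transport pieces $v\partial_x f$ and $(F_f-\alpha x)\partial_v f$ are in conservative form against $\log f$, so they vanish after integration. The Fokker-Planck piece, using the identity $(vf+\theta\partial_v f)/f = v + \theta\partial_v\log f$ after one integration by parts in $v$, produces exactly $-2\nu\iint |vf+\theta\partial_v f|^2/f\,\dd v\,\dd x$ together with an auxiliary contribution proportional to $\iint v^2 f$ and $\theta\cM$. For the quadratic-energy contribution, I will test \eqref{eq:VFPnonlin} against $(v^2+\alpha x^2)/2$: the two free-streaming transport terms cancel exactly after integration by parts (the confinement piece $-\alpha x\partial_vf$ kills the $\alpha x$ piece produced by differentiating $(v^2+\alpha x^2)/2$ along $v\partial_x f$), the mean-field force produces $\int F_f(t,x)\,j(t,x)\,\dd x$, and the Fokker-Planck part contributes exactly what is needed to cancel the auxiliary contribution from the entropy step.

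The crucial third step is the time-derivative of the mean-field energy. Using Lemma~\ref{lem:conteq} and integrating by parts in $x$ and $y$ in turn, then relabelling $x\leftrightarrow y$ in one of the two resulting pieces and exploiting that $\partial_xK^\text{e}$ is odd, I will obtain
\[
\frac{\dd}{\dd t}\frac{I}{2}\iint K^\text{e}(x-y)\rho(t,x)\rho(t,y)\,\dd x\,\dd y \;=\; I\iint\partial_xK^\text{e}(x-y)\,j(t,x)\,\rho(t,y)\,\dd x\,\dd y.
\]
Splitting $\partial_xK = \partial_xK^\text{e}+\partial_xK^\text{o}$ in $F_f=-I\partial_xK\ast\rho$, the even part of $\int F_f\,j\,\dd x$ cancels exactly against the right-hand side above, leaving the claimed residual $-I\iint\partial_xK^\text{o}(x-y)\,j(t,x)\,\rho(t,y)\,\dd x\,\dd y$.

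The main (but mild) obstacle is precisely this symmetrization and the associated sign-tracking in the mean-field derivative. It is also what isolates the asymmetric part of the interaction potential and explains why, under \eqref{eq:asym}, $\cE$ fails to be a Lyapunov functional for \eqref{eq:VFPnonlin}, in agreement with the discussion following \eqref{eq:entropy}.
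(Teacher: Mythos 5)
Your proof is correct and follows essentially the same route as the paper's: differentiate each of the three contributions to $\cE$ in time, test the PDE against $\theta\log f$ and $(v^2+\alpha x^2)/2$ to produce the dissipation term plus $\int F_f\,j\,\dd x$, use the continuity equation and the evenness of $K^\text{e}$ for the mean-field energy, and let the even part of the force cancel, leaving the asymmetric residual. The only (cosmetic) difference is that the paper tests against the combined function $\theta\log f+\theta+\tfrac{v^2}{2}+\tfrac{\alpha x^2}{2}$ at once and defers the final integration by parts to the very end, whereas you track the two energy contributions separately and integrate by parts already inside the mean-field step; the cancellations you describe do go through.
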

\begin{proof}
On the one hand,
\[
\begin{aligned}
&\frac{\dd}{\dd t}\left(\theta\iint_{\RR^2}f\log f\dd v\dd x + \iint\frac{v^2 + \alpha x^2}{2}f\dd v\dd x\right)\\
=&\iint_{\RR^2}\Big(-\partial_x(vf) - \partial_v((F_f - \alpha x)f)+2\nu\partial_v(vf+\theta\partial_vf)\Big)\left(\theta\log(f) + \theta + \frac{v^2}{2} + \frac{\alpha x^2}{2} \right)\dd v\dd x\\
=&\iint_{\RR^2}\left(\alpha xvf + (F_f - \alpha x)vf)\right)\dd v\dd x
-2\nu\iint_{\RR^2}\frac{1}{f}|vf+\theta\partial_vf|^2\dd v\dd x\\
=&I\iint_{\RR^2}K(x-y)\partial_xj(t,x) \rho(t,y)\dd x\dd y
-2\nu\iint_{\RR^2}\frac{1}{f}|vf+\theta\partial_vf|^2\dd v\dd x.\\
\end{aligned}
\]
On the other hand, since $K^\text{e}$ is even, one has 
\[
\begin{aligned}
\frac{\dd}{\dd t}\left(\frac{I}{2}\iint_{\RR^2}K^{\text{e}}(x-y)\rho(t,x)\rho(t,y)\dd x\dd y\right)
=& I\iint_{\RR^2}K^{\text{e}}(x-y)\partial_t\rho(t,x)\rho(t,y)\dd x\dd y\\
=& -I\iint_{\RR^2}K^{\text{e}}(x-y)\partial_xj(t,x)\rho(t,y)\dd x\dd y\,,
\end{aligned}
\]
where we used the continuity equation \eqref{eq:conteq}. By summing the two identities one obtains the result.
\end{proof}

The equality of Proposition~\ref{prop:freeenerg} shows that $\cE_{\blue{f}}$ is a Lyapunov functional only when $K$ is even. More precisely one has the following result. 

\begin{cor}\label{cor:notLyapunov}
If $K$ is even, then 
\[
\frac{\dd \cE_{\blue{f}}(t)}{\dd t}\leq 0,\quad \forall t\geq 0
\]
Conversely, if $K$ is not even function, there is $f^\text{in}$ such that 
\[
\left.\frac{\dd \cE_{\blue{f}}(t)}{\dd t}\right|_{t=0} > 0.
\]
\end{cor}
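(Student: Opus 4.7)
The first statement is immediate from Proposition~\ref{prop:freeenerg}: if $K$ is even then $K^\text{o}\equiv 0$, the right-hand side of the entropy identity vanishes, and what remains of $\frac{\dd\cE}{\dd t}$ is the nonpositive Fisher-type term $-2\nu\iint \frac{1}{f}|vf+\theta\partial_v f|^2\,\dd v\,\dd x$.

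For the converse, the plan is to exhibit an initial datum for which the interaction contribution on the right of Proposition~\ref{prop:freeenerg} strictly dominates the dissipation at $t=0$. My candidate is a \emph{local Maxwellian}
\[
f^\text{in}(x,v) = \frac{\rho^\text{in}(x)}{\sqrt{2\pi\theta}}\exp\!\left(-\frac{(v-u(x))^2}{2\theta}\right),
\]
since it turns the dissipation into a quadratic expression in $u$ while leaving the interaction term linear in $u$. A direct computation gives $vf^\text{in}+\theta\partial_v f^\text{in} = u(x)f^\text{in}$, hence $\iint \frac{1}{f^\text{in}}|vf^\text{in}+\theta\partial_v f^\text{in}|^2\,\dd v\,\dd x = \int u^2\rho^\text{in}\,\dd x$, and moreover $j^\text{in} = u\rho^\text{in}$. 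Substituting into Proposition~\ref{prop:freeenerg} reduces the identity at $t=0$ to
\[
\left.\frac{\dd\cE}{\dd t}\right|_{t=0} = -2\nu\int u^2\rho^\text{in}\,\dd x - I\int u(x)\rho^\text{in}(x)\,\phi(x)\,\dd x,\qquad \phi := \partial_x K^\text{o}\ast\rho^\text{in}.
\]

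I would then fix a smooth, positive, rapidly decaying $\rho^\text{in}$ (for instance a Gaussian) and set $u(x) = -\lambda\,\phi(x)$ with $\lambda>0$ small; note that $u$ is bounded because $\partial_x K^\text{o}\in L^\infty$, so $f^\text{in}$ lies in the class covered by Theorem~\ref{theo:well} and has finite entropy and kinetic energy. With this choice the above becomes
\[
\left.\frac{\dd\cE}{\dd t}\right|_{t=0} = \lambda\,(I - 2\nu\lambda)\int \phi^2\rho^\text{in}\,\dd x,
\]
which is strictly positive for $0<\lambda<I/(2\nu)$, provided $\phi\not\equiv 0$. The scaling is the crux of the argument: the dissipation is of order $\lambda^2$ while the interaction is of order $\lambda$, so a sufficiently small velocity field tuned to $\phi$ produces a net entropy increase.

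The only nontrivial step, and what I expect to be the subtle point, is to justify that $\rho^\text{in}$ can be chosen so that $\phi\not\equiv 0$; this is precisely where the failure of evenness enters. Since $K\in W^{1,\infty}$ admits a continuous representative, so does $K^\text{o}$, and being odd it satisfies $K^\text{o}(0)=0$. If $\partial_x K^\text{o}\ast\rho$ vanished for every smooth compactly supported $\rho$, then $\partial_x K^\text{o}\equiv 0$ in the sense of distributions, so $K^\text{o}$ would be constant and hence identically zero, contradicting the assumption that $K(x)\neq K(-x)$ for some $x$. Consequently a suitable $\rho^\text{in}$ exists, and the construction above yields the desired $f^\text{in}$.
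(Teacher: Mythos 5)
Your proof is correct, and it rests on exactly the same mechanism as the paper's: take a local-Maxwellian initial datum, observe that the dissipation is quadratic in the drift while the interaction contribution from Proposition~\ref{prop:freeenerg} is linear, and then shrink the drift to make the linear term win. The computation $vf^\text{in}+\theta\partial_v f^\text{in}=u(x)f^\text{in}$, $j^\text{in}=u\rho^\text{in}$, and the reduction to $\lambda(I-2\nu\lambda)\int\phi^2\rho^\text{in}\,\dd x$ are all correct.

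Where you diverge from the paper is in the \emph{choice} of drift. The paper takes a rigid constant $u\equiv v_*$, so that the sign of the interaction term $-Iv_*\iint\partial_x K^\text{o}(x-y)\rho^\text{in}(x)\rho^\text{in}(y)\,\dd x\,\dd y$ must then be arranged by choosing the sign of $v_*$ after the fact, and the argument silently invokes the existence of a nonnegative $\rho^\text{in}$ with nonvanishing bilinear form. You instead set $u=-\lambda\phi$ with $\phi=\partial_x K^\text{o}\ast\rho^\text{in}$, which aligns the drift with the interaction and turns the $O(\lambda)$ term into the manifestly signed quantity $I\lambda\int\phi^2\rho^\text{in}$. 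This is a cleaner and slightly more robust variant: the nondegeneracy condition you need is merely $\phi\not\equiv 0$ on a set where $\rho^\text{in}>0$, and your distributional argument that such a $\rho^\text{in}$ exists (otherwise $\partial_x K^\text{o}\equiv 0$, hence $K^\text{o}$ constant, hence zero by oddness and continuity of $K\in W^{1,\infty}$) is sound. One tiny point worth making explicit: you should choose $\rho^\text{in}$ strictly positive (e.g.\ a Gaussian of suitable variance) so that $\phi\not\equiv 0$ immediately gives $\int\phi^2\rho^\text{in}>0$; the variance can be tuned so that $\partial_x K^\text{o}\ast\rho^\text{in}\not\equiv 0$, since if this convolution vanished for all Gaussians it would force $\partial_x K^\text{o}\equiv 0$.
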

\begin{proof}
If $K$ is even, the decay of $\mathcal{E}_{\blue{f}}$ is an immediate consequence of Proposition~\ref{prop:freeenerg}. If $K$ is not even then $\partial_xK^\text{o}\neq0$ and one can find a non-zero, non-negative integrable $\rho^\text{in}$ such that \blue{$-\iint  \partial_xK^\text{o}(x-y)j^\text{in}(x)\rho^\text{in}(y)\dd x\dd y > 0$. One can take for instance $f^\text{in} = e^{-(v-v_*)^2/(2\theta)}\rho^\text{in}(x)$ with $\text{sign}(v_*) = - \text{sign}(\iint \partial_x K^\text{o}(x-y)\rho^\text{in}(x)\rho^\text{in}(y)\dd x\dd y)$.} Then, in the entropy balance of Proposition~\ref{prop:freeenerg} the second term of the left hand side is positive and $O(v_*^2)$  and the first term of the right hand side is positive and $O(v_*)$. Thus for a small enough $|v_*|$ the entropy increases initially.
\end{proof}
\blue{
\begin{rema}
One can inspect the critical points of the Lagrangian functional 
\[
\mathcal{L}(f,\lambda) = \cE_{f} + \lambda \left(\iint_{\RR^2} f\dd v\dd x-\cM\right)
\]
corresponding to the problem of minimizing $\cE_{f}$ under the constraint of given mass $\cM>0$. The Gateaux derivative of the Lagrangian in the $f$ variable is given by
\begin{align*}
    \lim_{\eps\to 0}\frac{\mathcal{L}(f+\eps\varphi,\lambda) - \mathcal{L}(f,\lambda)}{\eps} = \iint_{\RR^2} \varphi\left( \theta\log(f) +\theta +\lambda +\frac{v^2+\alpha x}{2} + I K^\text{e}\ast \rho(x) \right) \dd x \dd v.
\end{align*}
For the right-hand side to vanish (for all test function $\varphi$) and $\partial_\lambda\mathcal{L}(f,\lambda) = 0$, the function $f$ must be of the form \eqref{def:haissinski} with $\sigma^\infty$ coinciding with Haissinki solutions iff $K=K^\text{e}$. 
\end{rema}
To end this section on free energy estimates, we point out \cite{monmarche2023note}, which came out after the initial version of the present paper and provides interesting complementary results on \eqref{eq:VFPnonlin}. In particular, under the assumption that $\partial_xK$ is Lipschitz continuous, it is shown that one can design a Lyapunov functional for the system, even for non-symmetric interaction kernels. Unlike the candidate functional \eqref{eq:entropy}, it is in general a non-explicit quantity which is obtained as a limit of the relative entropy for the corresponding many particle system (see \cite[Proposition~2]{monmarche2023note}).
}

\subsection{Hypocoercivity estimates}

In this section we investigate the convergence of a solution to \eqref{eq:VFPnonlin} to the Haissinski solution $f^\infty$ of Theorem~\ref{theo:steady}. To that end we decompose the solution $f$ to \eqref{eq:VFPnonlin} as 
\begin{align}\label{eq:defg}
	f = f^\infty + f^\infty g. 
\end{align}
The perturbation $g$ is solution to 
\begin{equation} \label{eq:hypoVFPg}
\left\{ \begin{aligned} & \pa_t g + v\pa_x g + (F_{f^\infty} - \alpha x) \pa_v g - \frac{v}{\theta} F_{f^\infty g} = 2\nu\theta \left(\pa_v - \frac{v}{\theta}\right) \pa_v g + Q(g,g), \\
							    	&Q(g,g) = F_{f^\infty g} \left(\frac{v}{\theta} - \pa_v \right) g . \\
							    	& g _{\lvert_{t=0}} = g^\text{in} := \frac{f^\text{in}}{f^\infty} - 1. 
		 \end{aligned}\right. 
\end{equation}	
In the Hilbert space $\cH = \lbrace g\in L^2(f^\infty \dd x \dd v), \iint_{\RR\times\RR} gf^{\infty} \dd x \dd v = 0 \rbrace$ we denote the norm by $\|\cdot \|$ and the scalar product by $\langle \cdot , \cdot \rangle$. We introduce the transport operator 
\begin{align} \label{eq:hypoT}
	T = v\pa_x + (F_{f^\infty} -\alpha x)\pa_v 
\end{align}
which is antisymmetric in $\cH$, and recognise the adjoint $\pa_v^\ast = \frac{v}{\theta} - \pa_v$ hence we define the collision operator 
\begin{align}\label{eq:hypoL}
	L = - 2\nu \theta \pa_v^\ast  \pa_v 
\end{align}
which is symmetric in $\cH$. The equation then reads
\begin{align} \label{eq:hypoVFPcan}
\pa_t g + Tg - \frac{v}{\theta} F_{f^\infty g} = Lg + Q(g,g). 
\end{align}
In the framework of  hypocoercivity methods of \cite{DMS} (see also \cite{ADLT} for an application to the Vlasov-Poisson-Fokker-Planck system), we introduce $\Pi$ the orthogonal projection onto the null space of $L$, in our case 
\[ \Pi g = \frac{\rho_g}{\mathcal{M}\sigma^\infty}  \mbox{ with } \rho_g = \int g f^\infty \dd v .\]
In other words, with the Gaussian $M_{\theta} = \frac{1}{\sqrt{2\pi\theta}} e^{-v^2/2\theta}$ we have 
{$\Pi g  = \int g M_\theta \dd v$.} 
We also introduce the operator $A$ given by 
\[ A = \big(\Id + (T\Pi)^\ast T\Pi\big)^{-1} (T\Pi)^\ast.\]
\subsubsection{Linear hypocoercivity} Classical hypocoercivity theory then states that if the following four assumptions are satisfied : 
\begin{itemize}
	\item[(H1)] Microscopic coercivity: $\exists \lambda_m >0$ s.t. $\forall h\in\mathcal{D}(L)$ 
	\[ - \langle Lh,h \rangle \geq \lambda_m \| (\Id -\Pi) h\|^2 \]
	\item[(H2)] Macroscopic coercivity: $\exists \lambda_M >0$ s.t.  $\forall h\in\mathcal{H}$, $\Pi h \in \mathcal{D}(T)$ 
	\[ \| T\Pi h \|^2 \geq \lambda_M \| \Pi h \|^2 \]
	\item[(H3)] Parabolic macroscopic dynamics: $\forall h\in\mathcal{H}$ 
	\[ \Pi T \Pi h = 0 \]
	\item[(H4)] Bounded auxiliary operators 
	\[ \|AT(\Id-\Pi)h \| + \|ALh\| \leq C_M \| (\Id -\Pi) h\| \]
\end{itemize}
then the solution $h$ to the linear equation 
\begin{equation} \label{eq:linVFP}
	\pa_t h + Th = Lh 
\end{equation}
converges exponentially fast to the steady state in $\cH$. The proof relies on the modified entropy functional 
\begin{equation} \label{def:hypoLyap}
	H[h] := \frac12 \| h \|^2 + \eps \langle Ah, h \rangle .
\end{equation} 
Under the assumptions above and for $\eps$ small enough one can show 
that $H$ is the square of a norm which is equivalent to $\| \cdot \|$: 
\[ {\frac{1-\eps}{2}} \| h\|^2 \leq H[h] \leq {\frac{1+\eps}{2}} \| h \|^2, \quad \forall h\in \cH. \]
If $h$ is solution to the linear problem \eqref{eq:linVFP} then 
\begin{equation} \label{def:dissipation} 
\begin{aligned} 
	\frac{\dd}{\dd t} H[h(t)] := - \mathcal{D}_\eps [h(t)] &=  \langle Lh,h\rangle - \eps \langle AT\Pi h,h \rangle + \eps \langle TAh,h\rangle \\
	&- \eps \langle AT(\Id -\Pi)h,h \rangle + \eps \langle ALh,h \rangle 
\end{aligned}
\end{equation}
where $\mathcal{D}_\eps$ is called the dissipation of entropy functional. The assumptions (H1)-(H4) allow for a control of each term on the right hand side, see \cite[Theorem 2]{DMS}, which yields 
\begin{multline} \label{eq:hypocoef} 
	\frac{\dd}{\dd t} H[h(t)] \leq \langle Lh,h\rangle  - \eps \frac{\lambda_M}{1+\lambda_M} \| \Pi h \|^2+ \eps (1+C_M) \| (\Id -\Pi)h\| \|h\| \\
	\leq {- \lambda_m \| (\Id -\Pi)h\|^2} - \eps \frac{\lambda_M}{1+\lambda_M} \| \Pi h \|^2+ \eps (1+C_M) \| (\Id -\Pi)h\| \|h\| 
\end{multline} 	
Choosing $\eps$ small enough one can find ${\lambda_\text{lin}}>0$ depending on $\eps$, $\lambda_m$, $\lambda_M$ and $C_M$ such that
\[ \frac{\dd}{\dd t} H[h(t)] \leq - {\lambda_\text{lin}} H[h(t)] \]
and the exponential decay towards the steady-state follows. 

In order to derive an hypocoercivity estimate for our nonlinear equation \eqref{eq:hypoVFPcan} we first prove that the assumption (H1)-(H4) are satisfied.
\begin{lem}\label{lem:LinHypo}
The operators $T$ and $L$ defined in \eqref{eq:hypoT} and \eqref{eq:hypoL} satisfy the assumptions (H1)-(H4) {with the constants 
\[
\lambda_m = 2\nu\,,\quad \lambda_M = \frac{\alpha}{\theta}e^{-4\frac{I\mathcal{M}}{\theta}\|K\|_{L^\infty}}\,,\quad C_M = \nu + 4\theta^2+4\alpha\theta + 2\left(I\mathcal{M}\|\partial_x K\|_{L^\infty}\right)^2.
\]}
\end{lem}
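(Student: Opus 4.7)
The plan is to verify the four hypocoercivity hypotheses (H1)--(H4) one after the other, tracking the constants carefully so as to recover the claimed values of $\lambda_m$, $\lambda_M$ and $C_M$. Hypotheses (H1) and (H3) are routine consequences of the structure of $L$ and $\Pi$; hypothesis (H2) reduces to a weighted Poincar\'e inequality for the Haissinski marginal $\sigma^\infty$; and hypothesis (H4), which is the main obstacle, requires quantitative estimates on the auxiliary operator $A$ via the one-dimensional elliptic problem it inverts.

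First I would treat (H1) by computing $-\langle Lh,h\rangle = 2\nu\theta\,\|\partial_v h\|^2$ and invoking the Gaussian Poincar\'e inequality for $M_\theta$ at each fixed $x$, namely $\int|h-\Pi h|^2 M_\theta\,\dd v\leq \theta\int|\partial_v h|^2 M_\theta\,\dd v$; integrating against $\mathcal{M}\sigma^\infty\,\dd x$ gives $\lambda_m = 2\nu$. Hypothesis (H3) is immediate since $\Pi h$ depends only on $x$, so $T\Pi h = v\,\partial_x(\Pi h)$ has zero $v$-average against $M_\theta$. For (H2), I write $\Pi h = u(x)$ with $\int u\,\sigma^\infty\,\dd x = 0$, so that $\|T\Pi h\|^2 = \theta\mathcal{M}\int(\partial_x u)^2\sigma^\infty\,\dd x$ and $\|\Pi h\|^2 = \mathcal{M}\int u^2\sigma^\infty\,\dd x$. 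The key observation is that $\|K\ast\sigma^\infty\|_{L^\infty}\leq \|K\|_{L^\infty}$ combined with the fixed-point identity $\sigma^\infty = \cT(\sigma^\infty)$ forces the two-sided sandwich
\[
e^{-2I\mathcal{M}\|K\|_{L^\infty}/\theta}\,M(x)\ \leq\ \sigma^\infty(x)\ \leq\ e^{2I\mathcal{M}\|K\|_{L^\infty}/\theta}\,M(x)\,,
\]
where $M$ is the centered Gaussian of variance $\theta/\alpha$. Transferring the standard Gaussian Poincar\'e inequality for $M$ (of constant $\theta/\alpha$) to $\sigma^\infty$ through this sandwich then yields exactly $\lambda_M = (\alpha/\theta)\,e^{-4I\mathcal{M}\|K\|_{L^\infty}/\theta}$.

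The main work lies in (H4). I would characterize $w = Ah$ as the unique $w\in\mathrm{Ran}\,\Pi$ solving $w + (T\Pi)^\ast T\Pi w = (T\Pi)^\ast h$. Using $(T\Pi)^\ast = -\Pi T$ together with $T\Pi w = v\,\partial_x w$, this rewrites as a one-dimensional elliptic equation whose associated energy identity, tested against $w$, yields the structural bounds $\|w\|\leq \tfrac12\|(\Id-\Pi)h\|$ and $\|T\Pi w\|\leq \|(\Id-\Pi)h\|$. From there, $\|AT(\Id-\Pi)h\|$ and $\|ALh\|$ are obtained by expanding the action of $T(\Id-\Pi)$ and $L$ on the corresponding auxiliary variables, integrating by parts to transfer derivatives onto $w$, and reducing each resulting cross-term to a linear combination of $\|w\|$ and $\|T\Pi w\|$ with explicit coefficients. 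The diffusion generator $L$ contributes the $\nu$ term; the second-order derivative $\partial_v^2$ and the velocity moment $\int v^2 M_\theta\,\dd v = \theta$ produce the $4\theta^2$ term; the confinement coefficient $\alpha x$, paired with the Gaussian weight, produces the $4\alpha\theta$ term; finally the mean field $F_{f^\infty}$, which satisfies $\|F_{f^\infty}\|_{L^\infty}\leq I\mathcal{M}\|\partial_xK\|_{L^\infty}$ by Young's convolution inequality, produces the $2(I\mathcal{M}\|\partial_x K\|_{L^\infty})^2$ term.

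The hard point, as anticipated, is the systematic control of the mean-field cross-terms in (H4): because $F_{f^\infty}$ enjoys no regularity beyond an $L^\infty$ bound, it cannot be absorbed into weighted Sobolev norms and must instead be handled via its supremum, which is precisely what introduces the $I\mathcal{M}\|\partial_x K\|_{L^\infty}$ factor in $C_M$. This quantitative dependence is essential downstream, as it is what allows the smallness condition $I\mathcal{M}\leq C_1$ of Theorem~\ref{theo:long} to be closed when the hypocoercivity estimate is combined with the nonlinear term $Q(g,g)$.
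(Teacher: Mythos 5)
Your treatment of (H1), (H2), (H3) is correct and is essentially the same route the paper takes: (H1) is the Gaussian Poincar\'e inequality in $v$ giving $\lambda_m=2\nu$, (H3) is the vanishing of the first Gaussian moment, and (H2) is a Holley--Stroock perturbation of the Gaussian Poincar\'e inequality via the fixed-point identity $\sigma^\infty=\cT(\sigma^\infty)$ and the bound $\|K\ast\sigma^\infty\|_{L^\infty}\leq\|K\|_{L^\infty}$; your two-sided sandwich $e^{-2I\mathcal{M}\|K\|_{L^\infty}/\theta}M\leq\sigma^\infty\leq e^{2I\mathcal{M}\|K\|_{L^\infty}/\theta}M$ does indeed produce $\lambda_M=(\alpha/\theta)\,e^{-4I\mathcal{M}\|K\|_{L^\infty}/\theta}$ exactly as claimed.

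For (H4), however, there is a genuine gap in your sketch. Your structural bounds $\|Ah\|\leq\tfrac12\|(\Id-\Pi)h\|$ and $\|TAh\|\leq\|(\Id-\Pi)h\|$ are the right starting point, and the algebraic identity $(T\Pi)^\ast L=-2\nu(T\Pi)^\ast$ does dispose of $\|ALh\|$ with the constant $\nu$. But your claim that the remaining term $\|AT(\Id-\Pi)h\|$ can be controlled by ``integrating by parts to transfer derivatives onto $w$'' and ``reducing each resulting cross-term to a linear combination of $\|w\|$ and $\|T\Pi w\|$'' does not work. Writing $[AT(\Id-\Pi)]^\ast h=(\theta-v^2)\,\partial_x^2(\Id+\theta\partial_x^\ast\partial_x)^{-1}\Pi h$ and setting $u=(\Id+\theta\partial_x^\ast\partial_x)^{-1}\Pi h$, the quantity to be bounded is $\|\partial_x^2 u\|_{L^2(\sigma^\infty)}$, which is a \emph{second}-order quantity: it is not a linear combination of $\|u\|$ and $\|\partial_x u\|$, and the energy identity for the elliptic equation $\sigma^\infty u-\partial_x(\sigma^\infty\partial_x u)=\sigma^\infty\Pi h$ only controls the latter two. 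What is actually needed is an elliptic $H^2$-regularity estimate for this one-dimensional weighted problem, and that estimate requires an a priori $L^\infty$ bound on the logarithmic Hessian $\partial_x^2 V^\infty$ of the Haissinski density, namely $|\partial_x^2 V^\infty|\leq C^\infty:=2\alpha/\theta+(I\mathcal{M}/\theta)^2\|\partial_xK\|_{L^\infty}^2$, which the paper establishes separately (Lemma~\ref{lem:constVinf}) by a bootstrapping argument on $\|\partial_x\sigma^\infty\|_{L^1}$. This ingredient is entirely absent from your plan. Its omission is not cosmetic: the constants $4\alpha\theta$ and $2(I\mathcal{M}\|\partial_xK\|_{L^\infty})^2$ in $C_M$ arise precisely as $2\theta^2\,C^\infty$ after the moment computation $\int(\theta-v^2)^2M_\theta\dd v=2\theta^2$, so without the $H^2$ estimate and its underlying bound on $\partial_x^2 V^\infty$ the stated value of $C_M$ cannot be recovered.
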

\begin{proof}
Assumption (H1) follows from the Gaussian Poincaré inequality in $\cH$: 
\begin{align*}
	-\langle Lh,h \rangle = 2\nu\theta \| \pa_v h \|^2 \geq {2\nu} \| (\Id-\Pi) h \|^2 .
\end{align*}
For assumption (H2) we notice that $T\Pi h = v\pa_x \Pi h$ and $\int \Pi h \sigma^\infty = 0$. Moreover, since $K\ast \sigma^\infty \in L^\infty(\RR)$, the Holley-Stroock perturbation property (see e.g. \cite[Section 5.1.2]{BackryGentilLedoux} ensures a Poincaré inequality. More precisely, {using the fact that $\sigma^\infty = \cT(\sigma^\infty)$ and the Gaussian Poincaré inequality we get
\begin{align*}
	\| T\Pi h \|^2 = \cM \int_\RR |\pa_x \Pi h|^2 \sigma^\infty \dd x \geq \lambda_M  \cM \int_\RR |\Pi h|^2 \sigma^\infty \dd x = \lambda_M \| \Pi h \|^2,
\end{align*}
with $\lambda_M$ as in the statement.} Since the velocity profile of $f^\infty$ is a centered Gaussian we have immediately assumption (H3): $\Pi T \Pi = 0$ by symmetry. Finally, for assumption (H4), we begin with the control of $\| ALh\|$. One can easily check that 
\[ (T\Pi)^\ast h = - (\Pi T) h = - \left[ \pa_x  + {\theta^{-1}}(F_{f^\infty}-\alpha x) \right] \Pi(vh) = \pa_x^\ast \Pi (vh) \]
and 
\[ \Pi (vLh) = -2\nu \theta \int v \pa_v^\ast \pa_v h M_\theta \dd v = -2 \nu \Pi (vh)  \]
hence 
\[ (T\Pi )^\ast Lh = \pa_x^\ast \left( -2\nu \Pi(vh) \right) = -2\nu (T\Pi )^\ast h.\]
Moreover, using \cite[Lemma 1]{DMS} we know that assumptions (H1)-(H3) yield 
\begin{equation}\label{eq:controlLemma1DMS}
 \| Ah\| \leq \frac12 \| (\Id -\Pi)h \|, \quad \| TAh \| \leq \| (\Id -\Pi)h \|
\end{equation}
hence 
\begin{align*}  
	\| ALh \| &= \| \big(\Id +(T\Pi)^\ast T\Pi\big)^{-1}(T\Pi )^\ast L h \| = 2\nu \| \big(\Id +(T\Pi)^\ast T\Pi \big)^{-1}(T\Pi )^\ast h\| \\
	&= 2\nu \| Ah \| \\
	&\leq \nu \| (\Id -\Pi) h \|. 
\end{align*}
For the control of $\|AT(\Id -\Pi)h\|$ we work with the adjoint 
\[ [AT(\Id -\Pi)]^\ast = -(\Id -\Pi)T^2 \Pi [\Id +(T\Pi)^\ast T\Pi ]^{-1} \]
where 
\[ (T\Pi)^\ast T\Pi h = \pa_x^\ast \Pi \left( v^2 \pa_x \Pi h \right) = \theta \pa_x^\ast \pa_x \Pi h \]
and 
\[ (\Id -\Pi)T^2 \Pi h= (\Id -\Pi) \left( {v^2\pa_x^2} \Pi h + (F-\alpha x)\pa_x \Pi h \right) = (v^2-\theta)\pa_x^2\Pi h \]
hence, 
\[ [AT(\Id -\Pi)]^\ast h = (\theta-v^2)\pa_x^2 (\Id +\theta \pa_x^\ast \pa_x )^{-1} \Pi h. \]
Let us introduce $u = (\Id +\theta \pa_x^\ast \pa_x )^{-1} \Pi h$, i.e. $u$ solution to 
\begin{align} \label{eq:hypoH4}
	\sigma^\infty u - \pa_x (\sigma^\infty \pa_x u ) = \sigma^\infty \Pi h .
\end{align}
One can easily check using Proposition \ref{prop:existence_steady} that the steady-state $\sigma^\infty$ satisfies the assumptions of \cite[Proposition 5]{DMS} which ensure an elliptic regularity estimate for the solutions of \eqref{eq:hypoH4}. {In order to derive explicit constants here we briefly redo the argument. Clearly from \eqref{eq:hypoH4}, one gets $\|u\| \leq \|\Pi h\|$ and  $2\|\partial_xu| \leq \|\Pi h\|$ from integrating against $u$. Similarly, and using the previous bound $\|(\sigma^\infty)^{-1}\pa_x (\sigma^\infty \pa_x u )\|\leq 2\|\Pi h\|$ by integrating against $(\sigma^\infty)^{-1}\pa_x (\sigma^\infty \pa_x u )$. Then by expanding the square $\|(\sigma^\infty)^{-1}\pa_x (\sigma^\infty \pa_x u )\|^2 = \|\partial_xV^\infty\pa_xu\|^2 -\langle\partial_xV^\infty, \pa_x(|\pa_xu|^2)\rangle + \|\pa^2_xu\|^2$ and integrating the middle term by parts one concludes
\[
\| \pa_x^2 u \|^2 \leq (2+C^\infty)\|\Pi h\|^2,
\]
with $C^\infty$ the bound of Lemma~\ref{lem:constVinf}.} As a result, using the fact that $\pa_x^2 u$ does not depend on the variable $v$, we have
\begin{align*}
	\|  [AT(\Id -\Pi)]^\ast h \|^2 &\leq \cM \int_\RR (\theta - v^2)^2 M_\theta(v) \dd v \int_\RR |\pa_x u|^2 \sigma^\infty \dd x \\
	&\leq  
	{2\theta^2(2+C^\infty)}\| \Pi h \|^2 .
\end{align*}
Finally, since $\Pi [AT(\Id -\Pi)]^\ast h = \int_\RR (\theta-v^2) M_\theta \dd v \, \pa_x^2 (\Id +\theta \pa_x^\ast \pa_x )^{-1} \Pi h = 0 $ we conclude that for any $h, g \in \cH $
\begin{align*}
    \left| \langle AT(\Id -\Pi) h , g \rangle \right| &= \left| \langle h, [AT(\Id -\Pi)]^\ast g\rangle \right| \\
    &= \left| \langle (\Id -\Pi) h , [AT(\Id -\Pi)]^\ast g\rangle \right| \\
    &\leq {2\theta^2(2+C^\infty)}\| (\Id -\Pi) h \| \| g \| 
\end{align*}
and (H4) follows. 
\end{proof} 

\subsubsection{Nonlinear hypocoercivity}  Let us now turn to the nonlinear system \eqref{eq:hypoVFPg}. 
{\begin{prop} \label{prop:hypocoercivity}
	Let $g$ solve \eqref{eq:hypoVFPg}. There are $\lambda, C, \varepsilon_0>0$ depending only on $\alpha,\theta, \nu, \|K\|_{W^{1,\infty}}$ such that if $\mathcal{M}I < C$ and $\eps<\eps_0$, then 
	\begin{align} \label{eq:HypoGronwall}
		\frac12 \frac{\dd}{\dd t} H[g](t) \leq - \lambda H[g](t)+ C \|\Pi g\|_{L^1(\RR)}^2 H[g](t). 
	\end{align}
\end{prop}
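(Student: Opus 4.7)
The strategy is to apply the modified entropy method of \cite{DMS} to \eqref{eq:hypoVFPg}, treating the linear mean-field term $\tfrac{v}{\theta}F_{f^\infty g}$ and the quadratic $Q(g,g)$ as perturbations of the coercive linear dynamics $\pa_t h + Th = Lh$. Differentiating $H[g]$ along the flow via $\pa_t g = -Tg + Lg + \tfrac{v}{\theta}F_{f^\infty g} + Q(g,g)$, one obtains the DMS contribution---bounded by $-\lambda_{\text{lin}} H[g]$ for $\eps$ small, by Lemma~\ref{lem:LinHypo} and \eqref{eq:hypocoef}---plus six extra terms: three from $\tfrac{v}{\theta}F_{f^\infty g}$ and three from $Q(g,g)$, each appearing in $\langle g,\cdot\rangle$, $\eps\langle Ag,\cdot\rangle$ and $\eps\langle A\cdot, g\rangle$. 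The goal is to absorb the linear extras into the DMS dissipation when $I\cM$ is small, and to bound the nonlinear extras by $C\|\Pi g\|_{L^1(\RR)}^2 H[g]$.

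For the linear extras, the identity $\int v f^\infty\dd v = 0$ yields $\int v g f^\infty\dd v = \int v(\Id-\Pi)g\, f^\infty \dd v$; combined with the Cauchy-Schwarz bound
\[
\|\rho_g\|_{L^1(\RR)} = \cM \int \sigma^\infty|\Pi g|\dd x \le \sqrt{\cM}\|\Pi g\|
\]
(using $\int\sigma^\infty = 1$) and the elementary $\|F_{f^\infty g}\|_{L^\infty} \le I\|\pa_x K\|_{L^\infty}\|\rho_g\|_{L^1}$, one gets
\[
\big|\langle g, \tfrac{v}{\theta}F_{f^\infty g}\rangle\big| \le \tfrac{I\cM\|\pa_x K\|_{L^\infty}}{\sqrt{\theta}} \|\Pi g\|\|(\Id-\Pi)g\| \le C I \cM \, \|g\|^2.
\]
The two $A$-dressed analogues are handled the same way using $\|Ag\|\le\tfrac12\|(\Id-\Pi)g\|$ and $\|TAg\|\le\|(\Id-\Pi)g\|$ from \eqref{eq:controlLemma1DMS}; together, all linear extras are at most $C I \cM\, H[g]$, absorbable into $-\lambda_{\text{lin}}H[g]$ provided $I\cM\le C_1$ with $C_1$ depending only on $\alpha,\theta,\nu,\|K\|_{W^{1,\infty}}$.

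For the nonlinear term, the identity $f^\infty (\tfrac{v}{\theta}-\pa_v)(\cdot) = -\pa_v(f^\infty\,\cdot\,)$ together with $\pa_v g = \pa_v g^\perp$ gives after integration by parts
\[
\langle g, Q(g,g)\rangle = \iint F_{f^\infty g}\, g\,\pa_v g^\perp\, f^\infty \dd v \dd x.
\]
Using $\|F_{f^\infty g}\|_{L^\infty}\le C I\cM\|\Pi g\|_{L^1(\RR)}$ (since $\|\sigma^\infty\|_{L^\infty}$ is uniformly bounded for $I\cM\le C_1$ by Proposition~\ref{prop:existence_steady}) together with Cauchy-Schwarz and Young's inequality yields
\[
|\langle g, Q(g,g)\rangle| \le \delta\|\pa_v g^\perp\|^2 + \tfrac{C}{\delta}\|\Pi g\|_{L^1(\RR)}^2\|g\|^2,
\]
where the first term is absorbed into $-\langle Lg,g\rangle = 2\nu\theta\|\pa_v g^\perp\|^2$. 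The $A$-dressed nonlinear terms are treated analogously, noting that $\Pi\pa_v^* = 0$ gives $(\Id-\Pi)Q(g,g) = F_{f^\infty g}\pa_v^* g$, with $\|\pa_v^* g\|^2 = \|\pa_v g^\perp\|^2 + \theta^{-1}\|g\|^2$. Using $\|g\|^2\le 2H[g]$ for $\eps<1$, all nonlinear contributions combine into $C\|\Pi g\|_{L^1(\RR)}^2 H[g]$ as announced.

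The main obstacle is the precise bookkeeping of constants to ensure that $\lambda$, $C$, $\eps_0$ depend only on $\alpha,\theta,\nu,\|K\|_{W^{1,\infty}}$: the uniform control of $\|\sigma^\infty\|_{L^\infty}$ for $I\cM\le C_1$ (via the fixed-point formulation $\sigma^\infty = \cT(\sigma^\infty)$ and the bound from Proposition~\ref{prop:existence_steady}) and the joint tuning of $\eps$, $\delta$, and $C_1$ so that every absorption step leaves a strictly positive residue $-\lambda H[g]$. The $A$-dressed nonlinear contribution is the most delicate point, since the $\theta^{-1}\|g\|^2$ remainder in $\|\pa_v^* g\|^2$ could a priori produce a bare $\|\Pi g\|_{L^1(\RR)}^2\|g\|^2$ term with $\eps$-independent coefficient; smallness of $\eps$ together with Young's inequality with finely tuned parameters resolves this.
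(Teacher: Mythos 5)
Your proposal follows the paper's strategy closely: differentiate $H[g]$, control the DMS dissipation via Lemma~\ref{lem:LinHypo} and \eqref{eq:hypocoef}, estimate the mean-field and quadratic terms using $\|F_{f^\infty g}\|_{L^\infty}\lesssim I\|\pa_x K\|_{L^\infty}\|\rho_g\|_{L^1}$ together with $\|\rho_g\|_{L^1}\leq\sqrt{\cM}\|\Pi g\|$, Cauchy--Schwarz, and Young, and absorb the linear extras into the dissipation for $I\cM$ small. There are, however, two places where the paper's argument is structurally tighter and where your proposal is doing unnecessary work. First, the paper observes that two of your six source contributions vanish identically: $\langle Ag,\tfrac{v}{\theta}F_{f^\infty g}\rangle=0$ because $Ag=\Pi Ag$ and $F_{f^\infty g}$ are both $v$-independent while the centered Gaussian has zero first moment, and $\langle Ag, Q(g,g)\rangle=\langle \pa_v\Pi Ag, F_{f^\infty g}\,g\rangle=0$ since $Ag$ does not depend on $v$. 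You estimate both of these rather than noting they are zero. Second, and more importantly, for the surviving term $\eps\langle AQ(g,g),\Pi g\rangle$ the paper uses the explicit identity $A\pa_v^{*}=(\Id+\theta\pa_x^{*}\pa_x)^{-1}(\pa_x\Pi)^{*}$, which yields a bound $\leq\tfrac12\|F_{f^\infty g}\|_{L^\infty}\|\Pi g\|^2$ involving only the macroscopic norm; the quantity $\|\pa_v^{*}g\|$ and its $\theta^{-1}\|g\|^2$ remainder, which you flag as the most delicate point requiring finely tuned Young parameters, simply never appear. Your cruder route through $\|Ah\|\leq\tfrac12\|(\Id-\Pi)h\|$ and $\|\pa_v^{*}g\|^2=\|\pa_v g\|^2+\theta^{-1}\|g\|^2$ does eventually close because every offending term carries an $\eps$ prefactor, but you should be aware that the delicacy you describe is an artifact of not exploiting these cancellations. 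Lastly, the paper arranges the five leading terms into a $2\times2$ quadratic form in $(\|\Pi g\|,\sqrt{\theta}\|\pa_v g\|)$ and reads $\lambda$ off its spectrum, which is sharper than your direct bound $\|\Pi g\|\,\|(\Id-\Pi)g\|\leq\tfrac12\|g\|^2$ followed by absorption; both give the stated conclusion, but yours yields worse constants.
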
}
\begin{proof}
For $g$ a solution to \eqref{eq:hypoVFPg}, differentiating the modified entropy function $H$ of \eqref{def:hypoLyap} yields
\begin{equation} \label{eq:hypoNLH} 
	\begin{aligned}
	\frac{\dd}{\dd t} H[g](t) &= -\mathcal{D}_\eps [g(t)] + \langle g, \frac{v}{\theta} F_{f^\infty g}\rangle + \langle g, Q(g,g)\rangle +  \eps \langle A\frac{v}{\theta}F_{f^\infty g}, g\rangle \\
	&\quad + \eps \langle Ag, \frac{v}{\theta} F_{f^\infty g}\rangle \quad + \eps \langle AQ(g,g),g\rangle + \eps \langle Ag, Q(g,g) \rangle. \\
	&= -\mathcal{D}_\eps [g(t)] + \langle g, \frac{v}{\theta} F_{f^\infty g}\rangle {+ \langle g, Q(g,g)\rangle} +  \eps \langle A\frac{v}{\theta}F_{f^\infty g}, \Pi g\rangle\\& \quad + \eps \langle AQ(g,g),\Pi g\rangle . 
\end{aligned}
\end{equation} 
{Indeed since $Ag = \Pi A g $ and $F_{f^\infty g} = \Pi F_{f^\infty g}$, one has for the  \[\langle Ag, \frac{v}{\theta} F_{f^\infty g}\rangle = 0,\] as the first moment of a centered Gaussian is zero. Furthermore, since $Q(g,g)=\pa_v^\ast(F_{f^\infty g}  g)$ we also have \[\langle Ag, Q(g,g) \rangle =\langle \partial_v\Pi Ag, F_{f^\infty g}  g \rangle= 0.\]}

We already know the control \eqref{eq:hypocoef} of $\mathcal{D}_\eps[g](t)$.  For the following term we have
{\begin{align*}
\langle g, \frac{v}{\theta} F_{f^\infty g} \rangle &= \langle (\Id -\Pi)g, \frac{v}{\theta} F_{f^\infty g} \rangle\\
&\leq \|F_{f^\infty g}\|_{L^\infty} \|\frac{v}{\theta}\| \|(\Id -\Pi)g\|\\
&\leq I\sqrt{\frac{\mathcal{M}}{\theta}}\|\pa_x K\|_{L^\infty}\|\rho_g\|_{L^1_x}\|(\Id -\Pi)g\|\\ 
&\leq I\frac{\mathcal{M}}{\sqrt{\theta}}\|\pa_x K\|_{L^\infty} \| \Pi g \| \| (\Id -\Pi)g \|. 
\end{align*}}

{For the third term of the right-hand side, one has  
\begin{align*}
	\langle Q(g,g) ,g \rangle &= \langle F_{f^\infty g}  g  ,\partial_vg \rangle \\
	&\leq \|F_{f^\infty g}\|_{L^\infty}\|g\|\|\partial_vg\|\\
	&\leq I\|\pa_x K\|_{L^\infty}\|\rho_g\|_{L^1_x}\|g\|\|\partial_vg\|
\end{align*}}

{Then we can control the fourth term using \eqref{eq:controlLemma1DMS}
\begin{align*}
\langle A\frac{v}{\theta}F_{f^\infty g}, \Pi g\rangle&\leq \frac12\|\frac{v}{\theta}F_{f^\infty g}\|\| \Pi g\|\\
&\leq \frac12\|F_{f^\infty g}\|_{L^\infty}\left\|\frac{v}{\theta}\right\|\| \Pi g\|\\
&\leq \frac{I\mathcal{M}}{2\sqrt{\theta}}\|\pa_x K\|_{L^\infty}\|\Pi g\|^2
\end{align*}
}

{For the last term, we notice that $A\partial_v^* = (\Id +\theta\partial_x^{*}\partial_x)^{-1}(\partial_x\Pi)^*$ and therefore using that the operator norm of $\partial_x(\Id +\theta\partial_x^{*}\partial_x)^{-1}$ is bounded by $1/2$,  
\begin{align*}
\langle AQ(g,g) , \Pi g \rangle &= \langle F_{f^\infty g}\Pi g , \partial_x(\Id +\theta\partial_x^{*}\partial_x)^{-1}\Pi g \rangle\\
&= \frac{1}{2}\| F_{f^\infty g}\Pi g\| \|\Pi g\| \\
&= \frac{1}{2}\|F_{f^\infty g}\|_{L^\infty}\| \Pi g\|^2\\
&= \frac{I}{2}\|\pa_x K\|_{L^\infty}\|\rho_g\|_{L^1_x}\|\Pi g\|^2
\end{align*}
}

Altogether, \eqref{eq:hypoNLH} reads
\begin{align*}
	\frac{\dd}{\dd t} H[g](t) &\leq -2\nu\theta \| \pa_v g\|^2  - \eps \frac{\lambda_M}{1+\lambda_M} \| \Pi g\|^2 + {\eps (1+C_M) \sqrt{\theta}\| \pa_v g\| (\|\Pi g\|+ \sqrt{\theta}\| \pa_v g\|)} \\
	&\quad + {I\mathcal{M}} \|\pa_x K\|_{L^\infty} \| \Pi g \| \| \pa_v g \|  + {\frac{\eps I \mathcal{M}}{2\sqrt{\theta}}} \| \pa_x K\|_{L^\infty} \| \Pi g \|^2 \\
	& \quad + {I \| \pa_x K\|_{L^\infty} \| \rho - \rho^\infty \|_{L^1} \| \Pi g \|\|\partial_v g\|} + {\frac{\eps I }{2}} \| \pa_x K\|_{L^\infty} \| \rho - \rho^\infty \|_{L^1} \| \Pi g \|^2,
\end{align*}
{with $\lambda_M$ and $C_M $} given in Lemma~\ref{lem:LinHypo} {and where we have used the triangle inequality to bound $\|g\|\leq\|\Pi g\| + \|(\Id-\Pi) g\|$ and the Gaussian Poincaré inequality to get $\|(\Id-\Pi)g\|^2\leq \theta \|\partial_vg\|^2$}. {The five first terms define a quadratic form on $(\|\Pi g\|, \sqrt{\theta}\|\partial_vg\|)$ with matrix
\[
M
= \left(\begin{matrix}
-2\nu+\eps(1+C_M) & \ds\frac{\eps(1+C_M)\sqrt{\theta} + I\mathcal{M}\| \pa_x K\|_{L^\infty}}{2\sqrt{\theta}}\\[.75em]
\ds\frac{\eps(1+C_M) \sqrt{\theta}+ I\mathcal{M}\| \pa_x K\|_{L^\infty}}{2\sqrt{\theta}} & \ds-\eps(\frac{\lambda_M}{1+\lambda_M}-\frac{I\mathcal{M}\| \pa_x K\|_{L^\infty}}{2\sqrt{\theta}})\\
\end{matrix}\right) .
\]
By replacing $C_M$ by its expression and introducing $a = \frac{I\mathcal{M}\| \pa_x K\|_{L^\infty}}{\sqrt{\theta}}$, $b = 1+\nu+4\theta^2+4\alpha\theta$ and $c =  \frac{\lambda_M}{1+\lambda_M}$ it rewrites
\begin{equation}\label{eq:matrix_lambda}
M= \left(\begin{matrix}
-2\nu+\eps(b+2\theta a^2) & \ds\frac{\eps(b+2\theta a^2)}{2}+a\\[.75em]
\ds\frac{\eps(b+2\theta a^2)}{2}+a& \ds-\eps c + \eps a\\
\end{matrix}\right) .
\end{equation}
From there, if we assume 
\[
\eps <\frac{2\nu}{b+2\theta}  \quad\text{and}\quad a<\min\{\eps,c\}<1
\]
then the diagonal terms of \eqref{eq:matrix_lambda} are negative hence its trace is negative. Moreover since $a<\eps$ and $c = (1+\lambda_M^{-1})^{-1}> (1+\frac{\alpha}{\theta} \exp({\frac{4\|K\|_{L^\infty}}{\sqrt{\theta}\|\partial_xK\|_{L^\infty}}}))^{-1} := d$ (because $a<1$), one has
\[
\mathrm{det}(M) > \eps(2\nu-\eps(b+2\theta\eps^2))(d-\eps) - \eps^2(\frac b2 +\theta\eps^2+1)^2\sim_{\eps\to0}2\nu d\eps.
\]
Therefore there is $\varepsilon_0>0$ as well as $a_0$ depending only on $\nu,\alpha, \theta,\lambda_M$ such that if $a<a_0$ and $\eps<\eps_0$ then
\begin{align*}
	\frac{\dd}{\dd t} H[g](t) &\leq -\tilde{\lambda}(\theta\| \pa_v g\|^2 + \|\Pi g\|^2) + \frac12\| \Pi g \|_{L^1} (2\sqrt{\theta}a_0\|\partial_v g\| +\sqrt{\theta}\eps_0a_0\| \Pi g \|)\| \Pi g \|
\end{align*}
for some $\tilde{\lambda} >0$ (the opposite of the smallest eigenvalue of the matrix $M$). One concludes with any $\lambda<\tilde{\lambda}$ thanks to Young's inequality and the Gaussian Poincaré inequality.
}

\end{proof}

\subsection{Proof of Theorem \ref{theo:long}}\label{sec:proof_theo_3}

By {Cauchy-Schwarz inequality, one has
\begin{align*} \label{eq:Controlrho}
    \| \Pi g\|_{L^1_x}^2 \leq \| \Pi g \|^2\mathcal{M} \leq \frac{\mathcal{M}}{1-\eps} H[g](t). 
\end{align*}
which in turn proves that for the constants $\lambda, C, \eps_0>0$ of Proposition~\ref{prop:hypocoercivity} and under the same assumptions
\begin{equation}\label{eq:HypoGronwall2} 
		\frac12 \frac{\dd}{\dd t} H[g](t) \leq - \lambda H[g](t) + \frac{C\mathcal{M}}{1-\eps}  (H[g](t))^2,
\end{equation}
with $\eps<\eps_0$. If $H[g](0)< \frac{(1-\eps)\lambda}{C\mathcal{M}}$ then $H[g]$ decays and therefore, for $\lambda^* = \lambda(1-\frac{(1-\eps)}{C\mathcal{M}}) $ one has from \eqref{eq:HypoGronwall}
\[
 H[g](t) \leq  H[g](0) e^{-\lambda^*t}.
\]
Bootstraping this into  \eqref{eq:HypoGronwall2} yields 
\begin{align*} 
		\frac12 \frac{\dd}{\dd t} H[g](t) \leq - \lambda(1-e^{-\lambda^*t}) H[g](t),
\end{align*}}
and Theorem \ref{theo:long} follows.

\section{Derivation of the Vlasov-Fokker-Planck equation}\label{sec:derivation}

The Vlasov-Fokker-Planck equation \eqref{eq:VFPnonlin} arises in the modeling of the longitudinal dynamics of electron bunches in the storage ring of synchrotron particle accelerator. An electron storage ring is used to store ultrarelativistic electron bunches along a closed orbit. This confinement is achieved by various electromagnetic devices. A schematic drawing can be found in \cite[Figure 1.1]{roussel2014spatio}.

 In this section we first provide some elements concerning the derivation of the model, following mainly \cite{roussel2014spatio}. Then we put a particular emphasis on the derivation of the interaction kernel $K$ from Maxwell's equation, following Murphy, Krinsky and Gluckstern \cite{murphy1996longitudinal}. On this part we formalize mathematically some formal arguments of the latter paper in Proposition~\ref{prop:mur} and Proposition~\ref{prop:approx}. \blue{Note that the interaction kernel that we derive in this section is a particular case of the family of kernels considered in the previous sections. To avoid confusion we have decided to adopt different notations for the kernels and potentials in this section.}

\subsection{Particle dynamics}
Consider a charge $e$ traveling  in the storage ring of a particle accelerator. The orbit is assumed to be circular with radius $R_0$ and the velocity to be a fraction $\beta\in(0,1)$ of the speed of light $c$.

The motion of particles is described relatively to a reference orbit. The ideal particle does one turn of the device in $t_0 =2\pi R_0/(\beta c)$ and has a relativistic energy $\mathcal{E}_0$. At turn $n$ in the ring, a particle is referenced with respect to its (dimensionless) relative energy $\delta^n = (\mathcal{E}^n-\mathcal{E}_0)/\mathcal{E}_0$ and its longitudinal position on the orbit $z^n$ in the reference frame of the ideal particle (i.e. $z^n = 0$ for the ideal trajectory). It is assumed that the length and energy spread of the electron bunch is small so that $z^n\ll 1$ and $\delta^n\ll 1$. Because an electron in the bunch has an energy which is slightly different from  the nominal energy, it deviates from the ideal trajectory {which} implies an offset at each turn. More precisely 
\[z^{n+1} = z^n - 2\pi R_0 \eta \delta^n\]
with $\eta$ the slippage factor which is related to the fact that the {length of the orbit} is greater for a particle with higher energy (see \cite{wiedemann2015particle} for details). 
One can write an energy balance to account for energy variation at each turn. It yields \[\mathcal{\delta}^{n+1}\mathcal{E}_0 = \mathcal{\delta}^{n}\mathcal{E}_0 + e V_{\text{rf}}(z^n)  - 2\pi R_0 e E^n_\varphi(z^n) - U(\delta^n) + \mathcal{E}_0\sqrt{2 D}\xi^n.\] On the right-hand side, the second term is related to the acceleration of particles by the RF cavity in the ring. The latter in synced with the period of rotation of the ideal particle and will deliver a potential $V_{\text{rf}}(z^n)$ which varies depending on the {longitudinal displacement $z^n$}. The third term is related to the collective effects due to the self-consistent tangential electric field $E^n_\varphi(z^n)$ created by the bunch. The fourth and fifth terms account for the energy damping due to the emission synchrotron radiation. The term $\xi^n$ is a Gaussian white noise which {models} a stochastic perturbation of the energy loss $- U(\mathcal{E}^n)$, as a result of quantum effects in the emission of photons. Because of the small variation assumption $z^n\ll 1$ and $\delta^n\ll 1$ one has that \[e V_{\text{rf}}(z^n) - U(\delta^n)\approx e V_{\text{rf}}(0)-U(0) + eV_{\text{rf}}'(0)z^n - U'(0)\delta^n.\] The zeroth order term is $ e V_{\text{rf}}(0)-U(0) = 0$, since the RF cavity is tuned to compensate exactly for synchrotron radiation loss of the ideal particle.  Then $V_{\text{rf}}'(0)>0$ is taken to ensure a confinement effect around the nominal energy and $U'(0)>0$.

 After rescaling and non-dimensionalisation of the equations, one can introduce the new variables $x^n = z^n/\sigma_z$ and $v^n = -\delta^n/\sigma_\delta$ with suitably chosen $\sigma_z$ and $\sigma_\delta$. Moreover since the revolution period is much shorter than the typical time of variation of the electron bunch one can and go from discrete $n\in\NN$ to continuous $t\geq0$ number of turn (or up to a constant, time), and deal with continuous in time processes $x_t$ and $v_t$ which will satisfy the Langevin type dynamics
\begin{equation}\label{eq:langevin}
\begin{array}{rcl}
\dd x_t &=& v_t \dd t\\
\dd v_t &=& \underbrace{-\alpha x_t\dd t}_{\text{RF cavity}}\quad\underbrace{+F_f(t,x_t)\dd t}_{\substack{\text{Collective}\\\text{ effect}}}\quad\quad\underbrace{ - 2\nu v_t\dd t +2\sqrt{\nu\theta}\, \dd B_t.}_{\substack{\text{synchrotron radiation}\\\text{losses}}}
\end{array}
\end{equation}
In the equation above the parameters $\alpha, \nu, \theta>0$ are respectively  proportional to the physical quantities $V_{\text{rf}}'(0)$, $U'(0)$ and $D$. The process $(B_t)_{t\geq0}$ is a standard Brownian motion. The collective force term $F_f(t,x_t)$ is proportional to the self consistent tangential electric field which is going to be derived in the following section. The Langevin equations \eqref{eq:langevin} are the particle counterpart of the Vlasov-Fokker-Planck equation \eqref{eq:VFPnonlin}.

A more detailed presentation of the latter derivation can be found in \cite[Section 2.1]{roussel2014spatio} and references therein. We also mention \cite[Lecture 6]{stupakov2007lecture} and \cite{wiedemann2015particle} for additional material on the topic.

\subsection{Wakefield of relativistic particle on a circular orbit}
In this section we present the derivation of the synchrotron radiation reaction force for a relativistic charge, rotating on a circular orbit in free space. The associated field is called the free space wakefield of a point charge. Most of the arguments presented in this section follow from the paper \cite{murphy1996longitudinal} (see also the more recent \cite[Lecture 24]{stupakov2007lecture}). It consists in an asymptotic expansion, in the ultrarelativistic limit, of the longitudinal electromagnetic field created by the particle along its orbit. 

We assume that a particle of charge $e$ is traveling at position $\mathbf{r}_0(t)$ with velocity $c\boldsymbol{\beta}_0(t)=\mathbf{r}_0'(t)$. Writing $\phi\equiv\phi(t,\mathbf{x})\in\RR$ and $\mathbf{A}\equiv \mathbf{A}(t,\mathbf{x})\in\RR^3$ the resulting scalar and vector potentials at time $t$ and position $\mathbf{x}\in\RR^3$, Maxwell equations in the vacuum with the Lorentz gauge yields in SI units \blue{read
\[ \Box \phi = \eps_0^{-1} e \delta(\mathbf{x}-\mathbf{r}_0(t)), \quad \Box \mathbf{A} = \mu_0c e  \beta_0(t) \delta(\mathbf{x}-\mathbf{r}_0(t))\]}
with  $\Box = c^{-2}\partial^2_{tt} - \Delta_\mathbf{x}$ the d'Alembertian operator. Introducing the fundamental solution of the d'Alembertian operator in dimension $3$, namely	$ \delta\left( |\mathbf{x}| - ct\right)/(4\pi|\mathbf{x}|)$, the solutions to the Maxwell equations are given by the Liénard–Wiechert potentials
	\begin{equation}\label{eq:LWscalar}
		\phi(t,\mathbf{x}) =  \frac{e}{4\pi\eps_0} \frac{1}{|\mathbf{x}-\mathbf{r}_0(\tau)|-\boldsymbol{\beta}_0(\tau)\cdot(\mathbf{x}-\mathbf{r}_0(\tau)) },
	\end{equation}
	and
	\begin{equation}\label{eq:LWvector}
		\mathbf{A}(t,\mathbf{x}) = \frac{ce\mu_0}{4\pi} \frac{\boldsymbol{\beta}_0(\tau)}{|\mathbf{x}-\mathbf{r}_0(\tau)|-\boldsymbol{\beta}_0(\tau)\cdot(\mathbf{x}-\mathbf{r}_0(\tau))}. 
	\end{equation}
	The retarded time $\tau\equiv\tau(t,\mathbf{x})$ is implicitly defined as the unique solution of  
	\[t = \tau + \frac{|\mathbf{x}-\mathbf{r}_0(\tau)|}{c}.\] 
	
	\blue{Next, we express this potentials in terms of angular displacement with respect to the point of observation $\mathbf{x}$. To that end, since} the trajectory of a particle in the storage ring is assumed to be circular with radius $R_0$ constant, \blue{we introduce} the parametrization
	\[ 
	\mathbf{r}_0(t) = R_0 \big(\cos(\omega_0 t+\delta), \sin(\omega_0 t+\delta),0\big)
	\]
	where $\omega_0 = \beta c /R_0$, with constant relative velocity $\beta = |\boldsymbol{\beta}_0|\in[0,1]$, and a constant phase $\delta \in \RR$. We also parametrize the point of observation $\mathbf{x}$ on the orbit as 
	\[\mathbf{x} = ( R_0 \cos \varphi, R_0  \sin \varphi, 0) \]
	and we introduce the (unique) angles $\alpha,\xi\in(0,\pi)$ such that
	\begin{equation*}
		\left\{ \begin{aligned} \alpha(t,x) &\equiv \pi - \frac12 (\omega_0\tau(t,x) + \delta - \varphi)&[\pi], \\
			\xi(t) &\equiv \pi - \frac12 (\omega_0 t + \delta - \varphi)&[\pi]. \end{aligned} \right. 
	\end{equation*} 
	We refer to Figure~\ref{fig:angles} for an illustration.
    	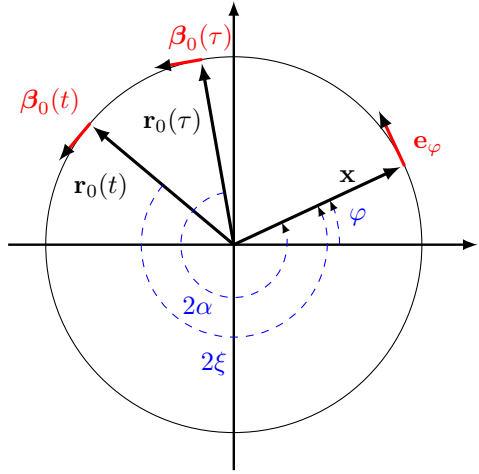
\begin{figure}
    \begin{tikzpicture}
      \def\xmax{3}
      \def\ul{.8}
      \def\R{2.5}
      \def\angx{25}
      \def\angtau{100}
      \def\angt{140}
      \coordinate (O) at (0,0);
      \coordinate (x1) at (\xmax,0);
      \coordinate (x) at (\angx:\R);
      \coordinate (r0tau) at (\angtau:\R);
      \coordinate (r0t) at (\angt:\R);
      \coordinate (xhalf) at (\angx:\R/1.5);
      \coordinate (r0tauhalf) at (\angtau:\R/1.5);
      \coordinate (r0thalf) at (\angt:\R/1.5);
      
      \draw[->,line width=0.9] (-\xmax,0) -- (1.08*\xmax,0);
      \draw[->,line width=0.9] (0,-\xmax) -- (0,1.08*\xmax);
      \draw (O) circle (\R);
      \node[circle,inner sep=0.9] (x') at (x) {};
      \node[above] at (xhalf) {$\mathbf{x}$};
      \draw[vector] (O) -- (x'); 
      \node[circle,inner sep=0.9] (r0tau') at (r0tau) {};
      \node[left] at (r0tauhalf) {$\mathbf{r}_0(\tau)$};
      \draw[vector] (O) -- (r0tau'); 
      \node[circle,inner sep=0.9] (r0t') at (r0t) {};
      \node[below left] at (r0thalf) {$\mathbf{r}_0(t)$};
      \draw[vector] (O) -- (r0t'); 
      \draw pic[->,"$\varphi$",draw=blue, text=blue, dashed, angle radius=40,angle eccentricity=1.2] {angle=x1--O--x};
      \draw pic[->,"$2\alpha\ $",draw=blue, text=blue,dashed,angle radius=20,angle eccentricity=1.3] {angle=r0tau--O--x};
      \draw pic[->,"$2\xi\ $",draw=blue, text=blue,dashed,angle radius=35,angle eccentricity=1.3] {angle=r0t--O--x};
      \draw[vector,->,line cap=round, draw = red] (x) --++ (\angx+90:\ul);
      \node[above right, text = red] at (x) {$\mathbf{e}_\varphi$};      
      \draw[vector,->,line cap=round, draw = red] (r0tau) --++ (\angtau+90:.8*\ul);
      \node[above, text = red] at (r0tau) {$\boldsymbol{\beta}_0(\tau)$}; 
      \draw[vector,->,line cap=round, draw = red] (r0t) --++ (\angt+90:.8*\ul);
      \node[above left, text = red] at (r0t) {$\boldsymbol{\beta}_0(t)$}; 
    \end{tikzpicture}
    \caption{Parametrization of the particle trajectory.}\label{fig:angles}
    \end{figure}
	We have by definition $|\mathbf{x}-\mathbf{r}_0(\tau)| = 2R_0 \sin(\alpha)$ and since $\boldsymbol{\beta}_0$ is tangential to the circular orbit, $\boldsymbol{\beta}_0(\tau) \cdot (\boldsymbol{x}-\boldsymbol{r}_0(\tau)) = 2R_0\beta \sin(\alpha)\cos(\alpha)$, hence we can rewrite the potentials as
	\begin{equation*}
		\left\{ \begin{aligned} \phi(t,\mathbf{x}) &= \frac{e}{8\pi\eps_0R_0} \frac{1}{\sin\alpha -\beta \sin\alpha\cos\alpha} \\
			A(t,\mathbf{x}) &= \frac{ce\mu_0}{8\pi R_0} \frac{\boldsymbol{\beta}_0(\tau)}{\sin\alpha -\beta \sin\alpha\cos\alpha}.,
		\end{aligned} \right. 
	\end{equation*}
	and the relation between the angle $\xi$ and the retarded angle $\alpha$ is
	\[ 
	\xi = \alpha - \beta \sin \alpha.
	\]
	The limit $\xi,\alpha\to0^+$ and  $\xi,\alpha\to\pi^-$ correspond respectively to the potentials created just in front and just behind the particle, on the circular orbit.  Let us now express the tangential electric field at the observation point, namely 
	\[ E_\varphi (t,\mathbf{x}) = -\frac{1}{R_0} \frac{\pa \phi}{\pa \varphi} - \frac{\pa \mathbf{A}\cdot\mathbf{e}_\varphi}{\pa t}. \]
By definition of the angles, one has the following relations
	\[\frac{\pa}{\pa \varphi} =  \frac12 \frac{\pa}{\pa \xi} \quad\text{and}\quad
			\frac{\pa}{\pa t} = -\frac{c\beta}{2R_0} \frac{\pa}{\pa \xi}.\]
	As a result,  using also that $\varepsilon_0\mu_0c^2 = 1$,  we get
	\[
		E_\varphi  
		= -\frac{e}{8\pi\eps_0R_0^2}\frac{\pa}{\pa \xi} \left[ \frac{1-\beta^2(1-2\sin^2\alpha)}{2(\sin\alpha-\beta\sin\alpha\cos\alpha)} \right].
	\]
	We extend the previous formula to the negative angles $\xi,\alpha\in(-\pi,0)$ using the $\pi$ periodicity of the parametrization. It yields
	\begin{equation}\label{eq:tang_elec}
	E_\varphi = -\frac{e}{8\pi\eps_0R_0^2}\frac{\partial V}{\partial \xi}
    \end{equation}	
	with a dimensionless potential $V$ given by
	\begin{align}
	V(\xi) = \frac{1-\beta^2(1-2\sin^2\alpha)}{2(|\sin\alpha|-\beta\sin\alpha\cos\alpha)} \,,\\
	\xi = \alpha-\beta|\sin\alpha|\label{eq:relalphaxi}\,,
	\end{align}
	 for $\xi,\alpha\in(-\pi,\pi)$.	 Let us now decompose the potential into two parts
	 \[
	 V(\xi) = V^\text{C}(\xi) + V^\text{S}(\xi).
	 \]
	 The first part corresponds to the singular Coulomb part of the potential defined by 
	 \begin{equation}\label{eq:Coulpot}
	 V^\text{C}(\xi) = \frac{1-\beta^2}{2|\sin\xi|}\,.
	 \end{equation}
	 The second part is related to the synchrotron radiation reaction force and reads
	 \begin{equation}\label{eq:Syncpot}
	 V^\text{S}(\xi) = \frac{(1-\beta^2)(|\sin\xi|-|\sin\alpha|+\beta\sin\alpha\cos\alpha) + 2\beta^2\sin^2\alpha|\sin\xi|}{2|\sin\xi|(|\sin\alpha|-\beta\sin\alpha\cos\alpha)}.
	 \end{equation}
	 
	 \subsection{\blue{Ultra-relativistic limit of the wakefield}} 
	 \blue{We are interested in the behaviour of $V^\text{C}(\xi)$ and $V^\text{S}(\xi)$ in the ultra-relativisitic limit, i.e. when the speed of the particle tends to the speed of light. In the particle physics literature, it is argued that in this limit the Coulomb part $V^\text{C}(\xi)$ is negligible compared to the synchrotron radiation reaction part $V^\text{S}(\xi)$. The purpose of this section is give some elements of mathematical justification for this negligibility. \\
	 We will characterize the ultra-relativistic regime is terms of the Lorentz factor $\gamma = (1-\beta^2)^{-\frac12}$ which goes to $+\infty$. Using \eqref{eq:relalphaxi} one finds that the Taylor expansion of $\alpha$ can be written as 
	 	\[
	\alpha = 
	\left\{
	\begin{aligned}
	(\Omega^{\frac13} - \Omega^{-\frac13})\gamma^{-1} + \frac{\Omega^{\frac53}+5\Omega-35\Omega^{\frac13}+35\Omega^{-\frac13}- 5\Omega^{-1}-\Omega^{-\frac53}}{60(\Omega^{\frac23} + \Omega^{-\frac23}-1)}\gamma^{-3} + O(\gamma^{-5})&\text{ if }\mu>0\,, \\
	\frac{\mu}{6}\gamma^{-3} + O(\gamma^{-5})&\text{ if }\mu<0\,.
	\end{aligned}
	\right.
	\]
	where $\Omega = \mu+\sqrt{\mu^2+1}$ and $\mu = 3\gamma^3\xi$  which motivates the following equivalent of $V^\text{S}$. This formula is due to Murphy, Krinsky and Gluckstern \cite{murphy1996longitudinal}.}
	 \begin{prop}\label{prop:mur}
	 Let $\gamma = (1-\beta^2)^{-\frac12}$ be the Lorentz factor and let 
	 \begin{equation}\label{eq:rescaling}
	 \mu = 3\gamma^3\xi
	 \end{equation}
	  be a given non-zero real number. Then in the ultra-relativistic limit $\gamma\to+\infty$ one has the asymptotic expansion
	 \begin{equation}\label{eq:Syncpot_expansion}
	 V^\text{S}\left(\xi\right)  = \gamma K^\text{fs}(\mu) + \left\{
	\begin{aligned} O(\gamma^{-1})&\ \text{for}\ \mu>0\,,\\
	O(\gamma^{-3})&\ \text{for}\ \mu<0\,.
	\end{aligned}
	\right.
	 \end{equation}
	 with 
\begin{equation}\label{eq:formula_freespace}
K^\text{fs}(\mu) = \left\{
	\begin{aligned} &2\frac{\cosh\left[\frac53\sinh^{-1}\mu\right]-\cosh\left[\sinh^{-1}\mu\right]}{\sinh\left[2\sinh^{-1}\mu\right]}&\text{ if }\mu>0\,, \\
	&0&\text{ if }\mu<0\,.
    \end{aligned}
    \right.
\end{equation}
	 \end{prop}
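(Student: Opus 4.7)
The plan is to carry out a careful asymptotic expansion of the potential $V^\text{S}(\xi)$ from \eqref{eq:Syncpot} in the ultrarelativistic limit $\gamma\to\infty$ with $\mu = 3\gamma^3\xi$ held fixed. The signs $\mu>0$ and $\mu<0$ correspond to very different scalings of the retarded angle $\alpha$ through the relation \eqref{eq:relalphaxi}, and they must be treated separately.

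For $\mu>0$ one has $\alpha>0$ and it is natural to set $u = \gamma\alpha$. Taylor-expanding $\xi = \alpha - \beta\sin\alpha$ using $\sin\alpha = \alpha - \alpha^3/6 + \cdots$ and $\beta = 1 - (2\gamma^2)^{-1} + O(\gamma^{-4})$ yields $\mu = u(3+u^2)/2 + O(\gamma^{-2})$. The cubic $\mu = u(3+u^2)/2$ has a unique positive root, which via the substitution $u = 2\sinh\theta$ and the identity $u^3+3u = 2\sinh(3\theta)$ is $u_\infty = 2\sinh(\tfrac{1}{3}\sinh^{-1}\mu)$; the implicit function theorem then gives $u = u_\infty + O(\gamma^{-2})$. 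Next I would expand the numerator $N := (1-\beta^2)(|\sin\xi|-D) + 2\beta^2\sin^2\alpha|\sin\xi|$ and the denominator $2|\sin\xi|\,D$ of \eqref{eq:Syncpot}, where $D := |\sin\alpha| - \beta\sin\alpha\cos\alpha$. Straightforward computation gives $D = u(1+u^2)/(2\gamma^3) + O(\gamma^{-5})$, $|\sin\xi| = \mu/(3\gamma^3) + O(\gamma^{-9})$, and $\sin^2\alpha = u^2/\gamma^2 + O(\gamma^{-4})$. Using the constraint $\mu = u(3+u^2)/2$ to cancel the leading term of $(1-\beta^2)(|\sin\xi|-D)$, the numerator simplifies to $N = u^2(2\mu-u)/(3\gamma^5) + O(\gamma^{-7})$, and dividing by the denominator $\mu u(1+u^2)/(3\gamma^6) + O(\gamma^{-8})$ yields $V^\text{S}(\xi) = \gamma\, u(2\mu-u)/[\mu(1+u^2)] + O(\gamma^{-1})$.

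The identification with the closed form \eqref{eq:formula_freespace} is then purely algebraic. Setting $3\theta = \sinh^{-1}\mu$ so that $u = 2\sinh\theta$ and $\mu = \sinh(3\theta)$, I would apply the identities $\sinh(3\theta) - \sinh\theta = 2\cosh(2\theta)\sinh\theta$, $\cosh(3\theta) = \cosh\theta(1+4\sinh^2\theta)$, $\sinh(6\theta) = 2\sinh(3\theta)\cosh(3\theta)$, and $\cosh(5\theta) - \cosh(3\theta) = 2\sinh(4\theta)\sinh\theta$, in order to recognize $u(2\mu-u)/[\mu(1+u^2)]$ as $2[\cosh(5\theta) - \cosh(3\theta)]/\sinh(6\theta)$, which is exactly \eqref{eq:formula_freespace}.

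For $\mu<0$ the scaling is completely different: with $\alpha<0$ small, $\xi = \alpha+\beta\sin\alpha \sim (1+\beta)\alpha$ forces $\alpha = \mu/(6\gamma^3) + O(\gamma^{-5})$, so that both $|\sin\xi|$ and $D$ are of size $|\mu|/(3\gamma^3)$ and their difference is only $O(\alpha^3) = O(\gamma^{-9})$. Consequently $(1-\beta^2)(|\sin\xi| - D) = O(\gamma^{-11})$ is dominated by $2\beta^2\sin^2\alpha|\sin\xi| = O(\gamma^{-9})$, which after division by $2|\sin\xi|\,D = O(\gamma^{-6})$ yields $V^\text{S}(\xi) = O(\gamma^{-3})$, as claimed. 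The main technical obstacle throughout is pushing all the Taylor expansions to sufficiently high order and propagating the $O(\gamma^{-2})$ correction on $u$ (respectively on $\alpha$) through the numerator and denominator in order to justify the precise error rates $O(\gamma^{-1})$ and $O(\gamma^{-3})$; this is essentially mechanical but demands careful bookkeeping to ensure that the delicate cancellations forced by the constraint \eqref{eq:relalphaxi} are not destroyed by truncation.
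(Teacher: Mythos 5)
Your proposal follows essentially the same route as the paper: expand the retarded angle $\alpha$ in powers of $\gamma^{-1}$ with $\mu=3\gamma^3\xi$ held fixed using \eqref{eq:relalphaxi}, treating $\mu>0$ (where $\alpha=O(\gamma^{-1})$) and $\mu<0$ (where $\alpha=O(\gamma^{-3})$) separately, and then Taylor expand $V^\text{S}$. The only cosmetic difference is that the paper parametrizes the leading coefficient as $\Omega^{1/3}-\Omega^{-1/3}$ with $\Omega=\mu+\sqrt{\mu^2+1}$, which is identical to your $u_\infty=2\sinh\bigl(\tfrac13\sinh^{-1}\mu\bigr)$ since $\sinh^{-1}\mu=\ln\Omega$; your version makes the hyperbolic-identity bookkeeping in the final identification more transparent. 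Your intermediate steps (the cubic $\mu=u(u^2+3)/2$, the expansions of $D$, $|\sin\xi|$, the cancellation in $(1-\beta^2)(|\sin\xi|-D)$, and the closed-form match via $\sinh(3\theta)-\sinh\theta=2\cosh(2\theta)\sinh\theta$, $\cosh(5\theta)-\cosh(3\theta)=2\sinh(4\theta)\sinh\theta$, etc.) check out, and your $\mu<0$ estimate correctly identifies the $O(\gamma^{-9})/O(\gamma^{-6})=O(\gamma^{-3})$ mechanism; the paper simply states the expansion of $\alpha$ to second order and asserts that Taylor expansion of $V^\text{S}$ then yields the result, so your write-up supplies precisely the detail the paper leaves implicit.
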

	 \begin{proof}
	 Direct corollary of the expansion $\alpha$ and the expression \eqref{eq:Syncpot} for $V^\text{S}$. 
	 \end{proof}

\begin{rema} Observe that because of \eqref{eq:rescaling} the angle $\xi$ behaves like $O(\gamma^{-3})$ and therefore, this approximation is only valid for small angles. When one considers collective dynamics with many interacting electrons on a given circular orbit, formula \eqref{eq:formula_freespace} will be valid to describe collective interactions only for a bunch  with small longitudinal spread. A striking fact following from \eqref{eq:Syncpot_expansion}-\eqref{eq:formula_freespace} is that at principal order the wakefield  created by a particle is non-zero only in front of it.
\end{rema}	 
\begin{rema}\label{rem:Coulpot_expansion}
It is immediate to perform the same expansion as in  Proposition~\ref{prop:mur} for the Coulomb part of the potential, and in this case one has that as $\gamma\to\infty$, 
\[
V^\text{C}\left(\xi\right) = \frac{3\gamma}{2\mu} + O(\gamma^{-5}).
\]
Observe that unlike for $V^\text{S}$ (see Figure~\ref{fig:wakefield}), the equivalent of $V^\text{C}$ is singular at the origin.
\end{rema}

\begin{figure}
\centering
\begin{tikzpicture}[scale = 1,
declare function={
Vfs(\x) = 2 * (cosh((5/3)*asinh(\x)) - cosh(asinh(\x))) / sinh(2*asinh(\x))*(\x>0);
}, 
declare function={
dVfs(\x) =(\x>0.1)*(- (6*\x*(\x + (\x^2 + 1)^(1/2))^(5/3) - 5*(\x + (\x^2 + 1)^(1/2))^(10/3) + 5)/(6*\x*(\x^2 + 1)*(\x + (\x^2 + 1)^(1/2))^(5/3)) - ((2*\x^2 + 1)*(1/(\x + (\x^2 + 1)^(1/2))^(5/3) - 2*(\x^2 + 1)^(1/2) + (\x + (\x^2 + 1)^(1/2))^(5/3)))/(2*(\x^4 + \x^2)*(\x^2 + 1)^(1/2))) + (\x>0)*(\x<=0.1)*((11200*\x^4)/6561 - (112*\x^2)/81 + 8/9);
}
]
\begin{axis}[
    xlabel=$\mu$,
    samples=200, 
    axis lines=middle, 
    xmin=-2, xmax=8.5, 
    ymin=-.1, ymax=1.1, 
    clip=false, 
]
\addplot[domain=-2:8.5, blue, sharp plot] {Vfs(x)};
\addplot[domain=0.0001:8.5, red, dashed, sharp plot] {dVfs(x)};
\legend{$K^\text{fs}(\mu)$, $\frac{\dd K^\text{fs}(\mu)}{\dd\mu}$}
\end{axis}
\end{tikzpicture}
\caption{Free space wakefield potentials}\label{fig:wakefield}
\end{figure}
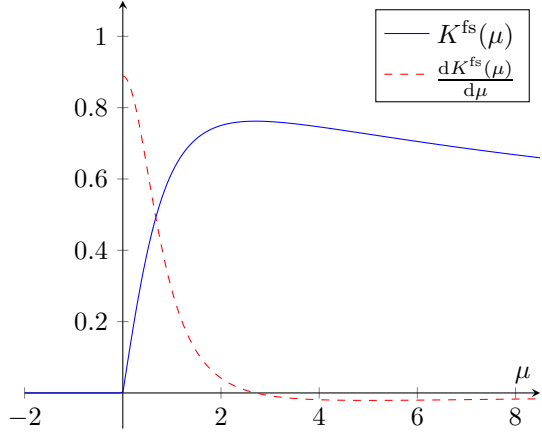

On the scale of $\xi$, one can indeed check that $V^\text{C}(\xi)$ is $O(\gamma^{-2})$ compared to $V^\text{S}(\xi)$. However on the scale of $\mu$ (when $\xi = O(\gamma^{-3})$) where the approximation $\gamma K^\text{fs}(3\gamma^3\xi)$ is derived, the Coulomb term is not negligible anymore (see Remark~\ref{rem:Coulpot_expansion}). In the next Proposition we show that $\gamma K^\text{fs}(3\gamma^3\xi)$ is indeed a good approximation of the potential $V = V^C+V^S$ in front of the charge (\emph{i.e.} $\xi>0$) on the scale 
\[
\gamma^{-3}\ll \xi\ll 1\,.
\]
Behind the charge (\emph{i.e.} $\xi<0$), it can be assumed that the potential vanishes (and therefore $\gamma K^\text{fs}(3\gamma^3\xi)$ is again a suitable approximation) on the scale
\[
\gamma^{-2}\ll -\xi\ll 1.
\]
More precisely one has the following result.
\begin{prop}\label{prop:approx}
Let $\xi = \xi_0 \varepsilon(\gamma)$ with $\varepsilon$ a continuous positive   function of the Lorentz factor $\gamma$ such that $\varepsilon(\gamma) = o(1)$ and $\gamma^{-3} = o(\varepsilon(\gamma))$ as $\gamma\to\infty$. Then
\begin{equation}
V(\xi)\sim \gamma K^\text{fs}(3\gamma^3\xi)\quad\text{as }\gamma\to\infty\,,\quad\text{if }\xi_0>0.
\end{equation}
If additionally $\gamma^{-2} = o(\varepsilon(\gamma))$, then
\begin{equation}
\lim_{\gamma\to\infty}V(\xi) = 0,\quad\text{if }\xi_0<0.
\end{equation}
\end{prop}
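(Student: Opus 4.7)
The strategy is to split $V(\xi) = V^{\text{C}}(\xi) + V^{\text{S}}(\xi)$ and analyze each contribution in the joint limit $\gamma\to\infty$, $\xi = \xi_0\varepsilon(\gamma)\to 0$, exploiting the implicit relation $\xi = \alpha - \beta|\sin\alpha|$ to expand in the auxiliary small parameter $\alpha$. For the Coulomb part, $|\sin\xi|\sim|\xi|$ yields $V^{\text{C}}(\xi)\sim (2\gamma^2|\xi_0|\varepsilon(\gamma))^{-1}$. When $\xi_0<0$, the strengthened hypothesis $\gamma^2\varepsilon(\gamma)\to\infty$ immediately gives $V^{\text{C}}\to 0$. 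When $\xi_0>0$, I will first extract the large-$\mu$ asymptotics $K^{\text{fs}}(\mu)\sim 2^{2/3}\mu^{-1/3}$ from formula \eqref{eq:formula_freespace} (using $\sinh^{-1}\mu\sim\log(2\mu)$), so that $\gamma K^{\text{fs}}(3\gamma^3\xi)\sim (2^{2/3}/3^{1/3})\xi^{-1/3}$; the ratio $V^{\text{C}}/(\gamma K^{\text{fs}})$ is then $O((\gamma^3\varepsilon(\gamma))^{-2/3})$, which vanishes thanks to $\gamma^3\varepsilon(\gamma)\to\infty$. Hence $V^{\text{C}}$ is negligible in both cases.

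For the synchrotron part with $\xi_0>0$, I will Taylor expand the implicit relation $\xi = \alpha - \beta\sin\alpha$ using $\beta = 1 - 1/(2\gamma^2) + O(\gamma^{-4})$, which gives $\xi = \alpha/(2\gamma^2) + \alpha^3/6 + O(\alpha^3/\gamma^2,\alpha^5)$. Under the hypothesis $\xi\gg\gamma^{-3}$, the cubic term dominates and $\alpha\sim (6\xi)^{1/3}\gg 1/\gamma$. Substituting into \eqref{eq:Syncpot} and expanding, the numerator reduces, after cancellation of the $O(\alpha\gamma^{-2})$ contributions in $\sin\xi - \sin\alpha(1-\beta\cos\alpha)$, to $\alpha^5/3 - \alpha^3/(3\gamma^2) \sim \alpha^5/3$, while the denominator $2\sin\xi\,\sin\alpha\,(1-\beta\cos\alpha)$ reduces to $\xi\,\alpha^3 \sim 6\xi^2$. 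Hence $V^{\text{S}}(\xi)\sim (6\xi)^{5/3}/(18\xi^2) = (2^{2/3}/3^{1/3})\xi^{-1/3}$, which matches $\gamma K^{\text{fs}}(3\gamma^3\xi)$ and closes the first case.

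For $\xi_0<0$, the implicit relation becomes $\xi = \alpha + \beta\sin\alpha$ for $\alpha\in(-\pi,0)$, with leading-order inversion $\alpha\sim\xi/2$ (no cubic regime appears because the linear term $\alpha(1+\beta)$ always dominates for small $|\alpha|$). A careful expansion shows that the $1/\gamma^2$ corrections to $\sin\xi$ and to $\sin\alpha(1+\beta\cos\alpha)$ cancel exactly, leaving $-\sin\xi + \sin\alpha(1+\beta\cos\alpha)\sim -2\alpha^3/3$. The numerator of $V^{\text{S}}$ is then dominated by $-2\beta^2\sin^2\alpha\,\sin\xi\sim -\xi^3/2$, and the denominator $2|\sin\xi|\,|\sin\alpha|\,(1+\beta\cos\alpha)\sim 2|\xi|^2$, giving $V^{\text{S}}(\xi)\sim|\xi|/4\to 0$. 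Combined with $V^{\text{C}}\to 0$ this yields $V(\xi)\to 0$.

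The main obstacle is the multi-scale bookkeeping: two small parameters ($\alpha$ and $1/\gamma$) are coupled via the implicit relation, and the correct leading-order behavior depends on the transition regime $\alpha\sim 1/\gamma$ (i.e.\ $\xi\sim\gamma^{-3}$), where the cubic correction takes over from the $1/\gamma^2$ linear correction. In addition, the exact cancellation of $1/\gamma^2$ contributions in the numerator for $\xi_0<0$ is a nontrivial algebraic feature that must be verified explicitly, since without it the naive leading order would be $O(|\xi|/\gamma^2)$ rather than $O(|\xi|)$; the weaker threshold $\varepsilon(\gamma)\gg\gamma^{-2}$ in the negative case reflects precisely this structural difference with the positive case.
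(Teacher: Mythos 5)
Your proof follows essentially the same route as the paper: split $V = V^{\text{C}} + V^{\text{S}}$, invert the implicit relation $\xi = \alpha - \beta|\sin\alpha|$ to get $\alpha\sim(6\xi)^{1/3}$ when $\xi_0>0$ and $\alpha\sim\xi/2$ when $\xi_0<0$, Taylor expand $V^{\text{S}}$, and match against the large-$\mu$ asymptotics $K^{\text{fs}}(\mu)\sim 2^{2/3}\mu^{-1/3}$. The $\xi_0>0$ case is handled exactly as in the paper and your equivalent $(2^{2/3}/3^{1/3})\xi^{-1/3}$ is the one the paper finds.

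Worth flagging: for $\xi_0<0$ you obtain $V^{\text{S}}(\xi)\sim -\xi/4$ (i.e.\ $|\xi|/4$), whereas the paper records $V^{\text{S}}(\xi)\sim -\xi^2/8$. Your value is the correct one. Writing $\alpha=-\epsilon$, $\epsilon>0$, one finds that the numerator of $V^{\text{S}}$ is dominated by $2\beta^2\sin^2\alpha|\sin\xi|\sim 2(1+\beta)\epsilon^3\to 4\epsilon^3$ (the $(1-\beta^2)(\cdots)$ piece is $O(\gamma^{-2}\epsilon^3)$ and negligible), while the denominator is $2|\sin\xi||\sin\alpha|(1+\beta\cos\alpha)\sim 2(1+\beta)^2\epsilon^2\to 8\epsilon^2$, hence $V^{\text{S}}\sim\epsilon/2\sim -\xi/4$. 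A quick numerical check (e.g.\ $\gamma=100$, $\xi=-10^{-3}$) gives $V^{\text{S}}\approx 2.5\times 10^{-4}$, matching $-\xi/4$ and not $-\xi^2/8\approx -1.25\times 10^{-7}$. Since both expressions tend to zero as $\xi\to 0$, the paper's conclusion $V(\xi)\to 0$ is unaffected, but your intermediate computation is the accurate one.

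One small caveat on your closing remark: the cancellation of the $O(\gamma^{-2}\alpha)$ terms in $-\sin\xi + \sin\alpha(1\pm\beta\cos\alpha)$ occurs both for $\xi_0>0$ and $\xi_0<0$, so it is not by itself the structural reason for the threshold difference. The weaker requirement $\gamma^{-2}=o(\varepsilon(\gamma))$ in the negative case is there simply because $V^{\text{C}}\sim\gamma^{-2}/(2|\xi|)$ must vanish and $V^{\text{S}}=O(|\xi|)\to 0$ for free; in the positive case the stronger $\gamma^{-3}=o(\varepsilon(\gamma))$ is what guarantees $\alpha\gg\gamma^{-1}$, hence $\xi\sim\alpha^3/6$ (so the cubic regime takes over from the linear one) and, simultaneously, that $V^{\text{C}}/V^{\text{S}}\sim\gamma^{-2}\xi^{-2/3}\to 0$.
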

\begin{proof} Since $\xi(\gamma) = \alpha(\gamma) - \sqrt{1-\gamma^{-2}} |\sin \alpha(\gamma)|$ and $\xi(\gamma)\to0$ as $\gamma\to \infty$, $\alpha(\gamma)$ also tends to $0$ as $\gamma\to \infty$. 

In the case $\xi_0>0$, an asymptotic expansion of \eqref{eq:relalphaxi} yields that 
$
\xi = \frac12\gamma^{-2}\alpha+ \frac16\alpha^3 + o(\gamma^{-2}\alpha+ \alpha^3)
$. Since $\gamma^{-3} = o(\xi)$, one shows that $\gamma^{-2}\alpha = o(\alpha^3)$ and therefore
\[
\xi =\frac{\alpha^3}{6} + o(\alpha^3)\,, \quad\text{if }\xi_0>0.
\]
In particular $\gamma^{-1} = o(\alpha)$. Inserting this expansion into the expression \eqref{eq:Syncpot} one finds
\[
V^\text{S}(\xi)\sim\frac{2^{\frac23}}{(3\xi)^{\frac13}}\,, \quad\text{if }\xi_0>0.
\]
One finds the same equivalent for $
\gamma K^\text{fs}(3\gamma^3\xi)$ as $\gamma\to\infty$.

In the case $\xi_0<0$, an asymptotic expansion of \eqref{eq:relalphaxi} shows that $\gamma^{-1} = o(|\alpha|^{\frac13})$ and 
\[
\xi = 2\alpha - \frac12 \gamma^{-2}\alpha -\frac{1}{8}\gamma^{-4}\alpha-\frac{\alpha^3}{6} +  o(|\alpha|^3)\,, \quad\text{if }\xi_0<0.
\]
Inserting this expansion into the expression \eqref{eq:Syncpot} one finds
\[
V^\text{S}(\xi)\sim -\frac{\xi^2}{8}\,, \quad\text{if }\xi_0<0.
\]

Finally \[V^\text{C}(\xi)\sim \frac{\gamma^{-2}}{|\xi|}\] as $\gamma\to\infty$ for $\xi_0\neq0$. The combination of the previous asymptotic expansions and the hypotheses $\gamma^{-3} = o(\xi)$ for $\xi_0>0$ and $\gamma^{-2} = o(\xi)$ for $\xi_0<0$ allow to prove the claims.
\end{proof}

\begin{rema}(Other wakefields)
The interaction potential $K^\text{fs}$ is too idealized to model an actual storage rings where interactions between the electromagnetic field and the boundaries of the vacuum chamber are non negligible. The computations above can be adapted to take into account these effects, for instance using the parallel plate wakefield where the circular orbit is assumed to be between two infinite conductive plates. In that case computations can still be carried on and the resulting potential is typically of the form \[K = K^\text{fs}+G,\] up to rescalings. The function $G$ is  bounded and smooth and encodes the effect of the reflections of the fields on the boundary. Unlike $K^\text{fs}$, $G$ is typically not supported only on the half line meaning that electron can interact with an electron behind it, thanks to reflected fields. For more details we refer to \cite{murphy1996longitudinal, roussel2014spatio}.
\end{rema}

\bibliographystyle{plain}
\bibliography{bibli}

\begin{thebibliography}{10}

\bibitem{ADLT}
Lanoir Addala, Jean Dolbeault, Xingyu Li, and M.~Lazhar Tayeb.
\newblock {{\(\mathrm{L}^2\)}}-hypocoercivity and large time asymptotics of the
  linearized {Vlasov}-{Poisson}-{Fokker}-{Planck} system.
\newblock {\em J. Stat. Phys.}, 184(1):34, 2021.
\newblock Id/No 4.

\bibitem{BackryGentilLedoux}
Dominique Bakry, Ivan Gentil, and Michel Ledoux.
\newblock {\em Analysis and geometry of {Markov} diffusion operators}, volume
  348 of {\em Grundlehren Math. Wiss.}
\newblock Cham: Springer, 2014.

\bibitem{bolley_2010_trend}
Fran{\c{c}}ois Bolley, Arnaud Guillin, and Florent Malrieu.
\newblock Trend to equilibrium and particle approximation for a weakly
  selfconsistent {Vlasov}-{Fokker}-{Planck} equation.
\newblock {\em ESAIM, Math. Model. Numer. Anal.}, 44(5):867--884, 2010.

\bibitem{bouchut_1993_existence}
Fran{\c{c}}ois Bouchut.
\newblock Existence and uniqueness of a global smooth solution for the
  {Vlasov}- {Poisson}-{Fokker}-{Planck} system in three dimensions.
\newblock {\em J. Funct. Anal.}, 111(1):239--258, 1993.

\bibitem{bouchut_1995_smoothing}
Fran{\c{c}}ois Bouchut.
\newblock Smoothing effect for the nonlinear
  {Vlasov}-{Poisson}-{Fokker}-{Planck} system.
\newblock {\em J. Differ. Equations}, 122(2):225--238, 1995.

\bibitem{bouin_2020_hypocoercivity}
Emeric Bouin, Jean Dolbeault, St{\'e}phane Mischler, Cl{\'e}ment Mouhot, and
  Christian Schmeiser.
\newblock Hypocoercivity without confinement.
\newblock {\em Pure Appl. Anal.}, 2(2):203--232, 2020.

\bibitem{cai2011linear}
Yunhai Cai.
\newblock Linear theory of microwave instability in electron storage rings.
\newblock {\em Physical Review Special Topics-Accelerators and Beams},
  14(6):061002, 2011.

\bibitem{cai2017coherent}
Yunhai Cai.
\newblock Coherent synchrotron radiation by electrons moving on circular
  orbits.
\newblock {\em Physical Review Accelerators and Beams}, 20(6):064402, 2017.

\bibitem{chandrasekhar_1943_stochastic}
S.~Chandrasekhar.
\newblock Stochastic problems in physics and astronomy.
\newblock {\em Rev. Mod. Phys.}, 15:1--89, 1943.

\bibitem{degond_1993_paraxial}
P.~Degond and P.~A. Raviart.
\newblock On the paraxial approximation of the stationary {Vlasov}-{Maxwell}
  system.
\newblock {\em Math. Models Methods Appl. Sci.}, 3(4):513--562, 1993.

\bibitem{degond_1986_global}
Pierre Degond.
\newblock Global existence of smooth solutions for the
  {Vlasov}-{Fokker}-{Planck} equation in 1 and 2 space dimensions.
\newblock {\em Ann. Sci. {\'E}c. Norm. Sup{\'e}r. (4)}, 19(4):519--542, 1986.

\bibitem{dolbeault_1991_stationary}
J.~Dolbeault.
\newblock Stationary states in plasma physics: {Maxwellian} solutions of the
  {Vlasov}- {Poisson} system.
\newblock {\em Math. Models Methods Appl. Sci.}, 1(2):183--208, 1991.

\bibitem{DMS}
Jean Dolbeault, Cl{\'e}ment Mouhot, and Christian Schmeiser.
\newblock Hypocoercivity for linear kinetic equations conserving mass.
\newblock {\em Trans. Am. Math. Soc.}, 367(6):3807--3828, 2015.

\bibitem{evain2019stable}
C~Evain, C~Szwaj, E~Roussel, J~Rodriguez, M~Le~Parquier, M-A Tordeux,
  F~Ribeiro, M~Labat, N~Hubert, J-B Brubach, et~al.
\newblock Stable coherent terahertz synchrotron radiation from controlled
  relativistic electron bunches.
\newblock {\em Nature Physics}, 15(7):635--639, 2019.

\bibitem{evain2017direct}
Cl{\'e}ment Evain, Eleonore Roussel, Marc Le~Parquier, Christophe Szwaj, M-A
  Tordeux, J-B Brubach, Laurent Manceron, Pascale Roy, and Serge Bielawski.
\newblock Direct observation of spatiotemporal dynamics of short electron
  bunches in storage rings.
\newblock {\em Physical review letters}, 118(5):054801, 2017.

\bibitem{favre2020hypocoercivity}
Gianluca Favre, Marlies Pirner, and Christian Schmeiser.
\newblock Hypocoercivity and reaction-diffusion limit for a nonlinear
  generation-recombination model.
\newblock {\em Arch. Ration. Mech. Anal.}, 247(4):15, 2023.
\newblock Id/No 72.

\bibitem{filbet2006modeling}
Francis Filbet and Eric Sonnendr{\"u}cker.
\newblock Modeling and numerical simulation of space charge dominated beams in
  the paraxial approximation.
\newblock {\em Mathematical Models and Methods in Applied Sciences},
  16(05):763--791, 2006.

\bibitem{gualdani2017factorization}
Maria~Pia Gualdani, St{\'e}phane Mischler, Cl{\'e}ment Mouhot, and St{\'e}phane
  Mischler.
\newblock {\em Factorization of non-symmetric operators and exponential
  H-theorem}.
\newblock Soci{\'e}t{\'e} Math{\'e}matique de France, 2017.

\bibitem{guillin_2022_convergence}
Arnaud Guillin, Pierre Le~Bris, and Pierre Monmarch{\'e}.
\newblock Convergence rates for the {Vlasov}-{Fokker}-{Planck} equation and
  uniform in time propagation of chaos in non convex cases.
\newblock {\em Electron. J. Probab.}, 27:44, 2022.
\newblock Id/No 124.

\bibitem{haissinski1973exact}
J~Haissinski.
\newblock Exact longitudinal equilibrium distribution of stored electrons in
  the presence of self-fields.
\newblock {\em Il Nuovo Cimento B (1971-1996)}, 18(1):72--82, 1973.

\bibitem{hellfer_2005_hypoelliptic}
Bernard Helffer and Francis Nier.
\newblock {\em Hypoelliptic estimates and spectral theory for {Fokker}-{Planck}
  operators and {Witten} {Laplacians}}, volume 1862 of {\em Lect. Notes Math.}
\newblock Berlin: Springer, 2005.

\bibitem{herau_2004_isotropic}
Fr{\'e}d{\'e}ric H{\'e}rau and Francis Nier.
\newblock Isotropic hypoelliptic and trend to equilibrium for the
  {Fokker}-{Planck} equation with a high-degree potential.
\newblock {\em Arch. Ration. Mech. Anal.}, 171(2):151--218, 2004.

\bibitem{herau_2016_global}
Fr{\'e}d{\'e}ric H{\'e}rau and Laurent Thomann.
\newblock On global existence and trend to the equilibrium for the
  {Vlasov}-{Poisson}-{Fokker}-{Planck} system with exterior confining
  potential.
\newblock {\em J. Funct. Anal.}, 271(5):1301--1340, 2016.

\bibitem{herda_2018_large}
Maxime Herda and L.~Miguel Rodrigues.
\newblock Large-time behavior of solutions to
  {Vlasov}-{Poisson}-{Fokker}-{Planck} equations: from evanescent collisions to
  diffusive limit.
\newblock {\em J. Stat. Phys.}, 170(5):895--931, 2018.

\bibitem{hormander_1967_hypoelliptic}
Lars H{\"o}rmander.
\newblock Hypoelliptic second order differential equations.
\newblock {\em Acta Math.}, 119:147--171, 1967.

\bibitem{lions_2001_uniqueness}
P.-L. Lions and N.~Masmoudi.
\newblock Uniqueness of mild solutions of the {Navier}-{Stokes} system in
  {{\(L^N\)}}.
\newblock {\em Commun. Partial Differ. Equations}, 26(11-12):2211--2226, 2001.

\bibitem{mishler_2016_exponential}
S.~Mischler and C.~Mouhot.
\newblock Exponential stability of slowly decaying solutions to the
  kinetic-{Fokker}-{Planck} equation.
\newblock {\em Arch. Ration. Mech. Anal.}, 221(2):677--723, 2016.

\bibitem{monmarche2023note}
Pierre Monmarch{\'e}.
\newblock A note on a vlasov-fokker-planck equation with non-symmetric
  interaction.
\newblock {\em arXiv preprint arXiv:2311.05747}, 2023.

\bibitem{murphy1996longitudinal}
JB~Murphy, RL~Gluckstern, and S~Krinsky.
\newblock Longitudinal wake field for an electron moving on a circular orbit.
\newblock {\em Part. Accel.}, 57(BNL-63090):9--64, 1996.

\bibitem{neunzert_1984_VFP}
H.~Neunzert, M.~Pulvirenti, and L.~Triolo.
\newblock On the {Vlasov}-{Fokker}-{Planck} equation.
\newblock {\em Math. Methods Appl. Sci.}, 6:527--538, 1984.

\bibitem{roussel2014spatio}
El{\'e}onore Roussel.
\newblock {\em Spatio-temporal dynamics of relativistic electron bunches during
  the microbunching instability: study of the Synchrotron SOLEIL and UVSOR
  storage rings}.
\newblock PhD thesis, Universit{\'e} Lille1-Sciences et Technologies, 2014.

\bibitem{stupakov2007lecture}
G~Stupakov.
\newblock Lecture notes on classical mechanics and electromagnetism in
  accelerator physics.
\newblock {\em The US Particle Accelerator School, Lansing, Michigan}, 2007.

\bibitem{venturini2005coherent}
Marco Venturini, Robert Warnock, Ronald Ruth, and James~A Ellison.
\newblock Coherent synchrotron radiation and bunch stability in a compact
  storage ring.
\newblock {\em Physical Review Special Topics-Accelerators and Beams},
  8(1):014202, 2005.

\bibitem{victory_1991_existence}
Harold Dean~jun. Victory.
\newblock On the existence of global weak solutions for
  {Vlasov}-{Poisson}-{Fokker}- {Planck} systems.
\newblock {\em J. Math. Anal. Appl.}, 160(2):525--555, 1991.

\bibitem{victory_1990_classical}
Harold Dean~jun. Victory and Brian~P. O'Dwyer.
\newblock On classical solutions of {Vlasov}-{Poisson} {Fokker}-{Planck}
  systems.
\newblock {\em Indiana Univ. Math. J.}, 39(1):105--156, 1990.

\bibitem{villani_2009_hypocoercivity}
C{\'e}dric Villani.
\newblock {\em Hypocoercivity}, volume 950 of {\em Mem. Am. Math. Soc.}
\newblock Providence, RI: American Mathematical Society (AMS), 2009.

\bibitem{warnock2018numerical}
Robert Warnock and Karl Bane.
\newblock Numerical solution of the ha{\"\i}ssinski equation for the
  equilibrium state of a stored electron beam.
\newblock {\em Physical Review Accelerators and Beams}, 21(12):124401, 2018.

\bibitem{warnock2000general}
Robert~L Warnock and James~A Ellison.
\newblock A general method for propagation of the phase space distribution,
  with application to the saw-tooth instability.
\newblock In {\em The Physics of High Brightness Beams}, pages 322--348. World
  Scientific, 2000.

\bibitem{wiedemann2015particle}
Helmut Wiedemann.
\newblock {\em Particle accelerator physics}.
\newblock Springer Nature, 2015.

\end{thebibliography}

\end{document}